%

\documentclass[aos,preprint]{imsart}

\RequirePackage{amsthm,amsmath,amsfonts,amssymb,bbm}
\RequirePackage[numbers]{natbib}
\RequirePackage[colorlinks,citecolor=blue,urlcolor=blue]{hyperref}
\usepackage{graphicx,subfig}
\usepackage{mathtools,verbatim,enumitem,mathrsfs,wasysym}

\usepackage{color}

\startlocaldefs
\newcommand{\cB}{\mathcal{B}}
\newcommand{\cJ}{\mathcal{J}}
\newcommand{\cT}{\mathcal{T}}

\newcommand{\cF}{\mathcal{F}}
\newcommand{\cG}{\mathcal{G}}
\newcommand{\cX}{\mathcal{X}}

\newcommand{\cC}{\mathcal{C}}
\newcommand{\cH}{\mathcal{H}}

\newcommand{\cV}{\mathcal{V}}
\newcommand{\cU}{\mathcal{U}}

\newcommand{\Exp}{{\sf E}}

\newcommand{\Dro}{{\sf D}}

\newcommand{\Pro}{{\sf P}}
\newcommand{\Rro}{{\sf R}}

\newcommand{\bN}{\mathbb{N}}
\newcommand{\bS}{\mathbb{S}}

\newcommand{\bG}{{\mathbb{G}}}
\newcommand{\bQ}{{\mathbb{Q}}}

\newcommand{\bR}{{\mathbb{R}}}

\newcommand{\bY}{{\mathbb{Y}}}

\newcommand{\ind}[1]{\mathbbm{1}_{#1}}  
\newcommand{\mwd}[1]{\widetilde{#1}}  

\newtheorem{theorem}{Theorem}[section]
\newtheorem{lemma}{Lemma}[section]
\newtheorem{corollary}{Corollary}[section]

\newtheorem{remark}{Remark}[section]

\endlocaldefs



\begin{document}

\begin{frontmatter}
\title{Change Acceleration and Detection}
\runtitle{Change Acceleration and Detection}


\begin{aug}
\author[A]{\fnms{Yanglei}~\snm{Song}\ead[label=e1]{yanglei.song@queensu.ca}}
\and
\author[B]{\fnms{Georgios}~\snm{Fellouris}\ead[label=e2]{fellouri@illinois.edu}} 

\address[A]{Department of Mathematics and Statistics, Queen's University \printead[presep={,\ }]{e1}}

\address[B]{Department of Statistics, University of Illinois Urbana-Champaign \printead[presep={,\ }]{e2}}


\end{aug}

%

\begin{abstract}
A novel sequential change detection problem is proposed, in which the goal is to not only detect but also accelerate the change. Specifically, it is assumed that the sequentially collected observations are responses to treatments selected in real time. The assigned treatments determine the pre-change and post-change distributions of the responses and also influence when the change happens. The goal is to find a  treatment assignment rule and a stopping rule that minimize the expected total number of observations subject to a user-specified bound on the false alarm probability. The optimal solution is obtained under a general Markovian change-point model.  Moreover, an alternative procedure is proposed, whose applicability is not restricted to  Markovian change-point models and whose design requires minimal computation. For a large class of change-point models,  the proposed procedure is shown to achieve the optimal performance in an asymptotic sense. Finally,   its performance is found in simulation studies to be comparable to the optimal,  uniformly with respect to the error probability. 
\end{abstract}

\begin{keyword}[class=MSC]
\kwd[Primary ]{62L05, 62L10}
\end{keyword}

\begin{keyword}
\kwd{sequential design of experiments}
\kwd{change-point detection}
\kwd{asymptotic optimality}
\end{keyword}

\end{frontmatter}


\section{Introduction} 

The field of sequential change detection, which dates back to the works of Shewhart \cite{shewhart1931economic} and Page \cite{page1954continuous},   deals with the problem of detecting quickly a change in a system that is monitored in real time (see, e.g.,    \cite{tartakovsky2014sequential,veeravalli2014quickest}). 
The goal is to minimize some metric of the \textit{detection delay}, i.e., the number of observations between the change-point and the time of stopping, while controlling the frequency of  \textit{false alarms}, i.e., stopping before the change has occurred. 
 There are two main approaches in the literature regarding the mechanism that triggers the change. In the first one,  the change mechanism  is  treated as  unknown  \citep{lorden1971procedures,pollak1985,moustakides2008}, whereas in the second  the  change-point is assumed to be   a random variable with a known  (prior) distribution  \citep{shiryaev1963optimum,tartakovsky2005general,tartakovsky2017asymptotic}.   Despite their wide range of applications, neither of the two  approaches  is   suitable   when  the change is a latent event that 
 we can \textit{influence}.  
 This is the case, for example,  in an educational setup (see, e.g., \citep{baker2014educational,zhang2016smart,zhang2023statistical}), 
 where a change occurs when a student masters a certain skill, the mastery status is inferred by the student's responses to various tasks, and instructors attempt with an appropriate selection of tasks to not only detect the time of mastery but also to accelerate it.

\subsection{Proposed problem} 
 Motivated by the above considerations,  we propose a novel sequential change detection problem with an experimental design component. Specifically, we assume that the sequentially collected observations are responses to experimental design choices, to which we refer as  \textit{treatments}.  At any given time, we need to decide whether to stop and declare that the change has already occurred or to continue the process and select the treatment to be assigned next. These decisions can be made based on the current as well as previous responses.    Our main assumption is that the assigned treatments influence not only the speed of detection but also the change-point itself.  The problem  is determined by 
 how the responses depend on the assigned treatments before and after the change, i.e., a  \textit{response model}, and how the treatments influence the change, i.e., a \textit{change-point model}. Given such models, we aim to find a rule for sequentially assigning treatments, i.e.,  a \textit{treatment assignment rule},    and a rule for determining when to stop the process, i.e., a \textit{stopping rule},  in order to minimize the expected \textit{total} number of responses subject to a constraint on the probability of false alarm.   Since in the absence of a false alarm  the total number of observations up to stopping is equal to the time of change plus the detection delay, we refer to this problem as \textit{sequential change acceleration and detection}.  
 
 In the context of  educational applications, the response model corresponds to a {cognitive diagnosis model} 
\cite{Templin2006,templin2010diagnostic,wang2020using},  the change-point model to a {latent transition model} \cite{collins2009latent,li2016latent,kaya2017assessing,wang2018tracking,chen2018hidden,liang2023latent},   and the treatments to educational items, such as formative assessment, practice, intervention.
These references consider multiple latent attributes (i.e., skills) and estimate the model parameters based on data from many users.  In the present work, we assume that both models have been calibrated offline,  and we focus on the real-time instruction of a single attribute to a single user. Note that the online instruction is individualized since the developed models consider subject-specific variables that capture the characteristics of users. This separation of offline model estimation and online decision-making is realistic and practical, as 
there is a large amount of historical data and the duration of the learning process is relatively short on average.

\subsection{Literature review}
The problem in this work reduces to the classical  Bayesian sequential change detection problem 
\cite{shiryaev1963optimum,lai1998information,shiryaev2007optimal,tartakovsky2005general,tartakovsky2017asymptotic}  
when there is only a single available treatment. Indeed, in this case,  there is no treatment assignment rule and the goal is to find a stopping rule that minimizes the average detection delay subject to a control on the probability of false alarm.  More recently, there have been certain works that consider the (non-Bayesian) sequential change detection problem with adaptive treatments selection \cite{HeyTaj16,LiMeiShi15,XuMeiMous21,chaudhuri2024round,XuMeiPostUncert_SeqAn,fel_veer_moust}. 
However, in these works, the treatments can only affect the speed of detection, but not the change-point; as a result, there is no acceleration task.

Another related research area is {sequential design of experiments}, also known as {active hypothesis testing} or {controlled sensing}  \citep{chernoff1959,kiefer1963asymptotically,keener1984,naghshvar2013active,nitinawarat2013controlled},
which refers to sequential hypothesis testing with experimental design.  In this literature, however, the latent state of the system,   i.e., the true hypothesis,  does not change over time and cannot be influenced by design choices.  

We should also refer to the  {stochastic shortest path}  problem \citep{bertsekas1991analysis,Patek01,patek2007partially}, where the goal is to perform a series of actions to drive a Markov chain to a certain absorbing state with the minimum possible cost. In this problem, the target state is assumed to be observable, therefore there is no detection task. 

Finally,  we discuss the offline reinforcement learning literature \cite{levine2020offline,uehara2022review,prudencio2023survey},
where online policies, which induce treatment assignment rules, are learned directly from historical sequential data. This approach, however,  faces several challenges when applied to the current problem: 
(i) change-points are not directly observable in the historical data, (ii) the problem has an infinite horizon with no discounting factor (see Section \ref{subsec:auxilliary}), and (iii) it is not clear how the false alarm rate can be controlled.

\subsection{Main results}
First, 
we assume that the response and change-point models are completely specified. 
We refer to a change-point model as \textit{Markovian} if the conditional probability that the change occurs at a certain time depends on past treatments only via a {sufficient statistic of fixed dimension}.  For general Markovian change-point models, we obtain the optimal solution to the proposed problem by embedding it into the framework of {partially observable Markov decision processes} (PO-MDP) \citep{bertsekas1995dynamic,hernandez2012discrete}, and
we establish a structural result that the optimal stopping rule is to stop the first time the posterior odds (that the change has occurred) exceeds a deterministic function of the sufficient statistic.  This function, as well as the optimal assignment rule, are obtained numerically, thus, they  provide limited insight into how treatments are selected. Moreover,  the computation can be challenging when the dimension of the sufficient statistic is large or the response model is complicated.     These limitations of the PO-MDP approach motivate us to propose an alternative procedure that is applicable beyond Markovian change-point models and has an explicit and intuitive assignment rule whose design requires minimal computation.  

 The proposed procedure starts by repeatedly assigning a certain  \textit{block} of treatments, $\Xi_1$, until the posterior odds (that the change has happened) exceeds some pre-specified threshold $b_1$. When this happens, we apply repeatedly a different block of treatments, $\Xi_2$, until either the posterior odds exceeds a larger threshold $b_2\geq b_1$, or the likelihood ratio {statistic} \textit{against} the change based on the responses to block $\Xi_2$ crosses another threshold  $d$.  In the former case, we stop and declare that the change has happened, whereas in the latter we switch back to block $\Xi_1$ and repeat the process until termination.

We show that an appropriate selection of $b_2$ \textit{alone} guarantees the false alarm constraint, and we choose the remaining parameters based on a non-asymptotic upper bound for the expected sample size.
Specifically,  for any given blocks $\Xi_1$ and $\Xi_2$, we obtain a closed-form approximation for the values of   $b_1$ and $d$ that minimize the upper bound. The blocks $\Xi_1$ and  $\Xi_2$ can be supplied by practitioners, or they can be chosen ``optimally''  to approximately minimize the resulting upper bound for the selected values of $b_1$ and $d$.  For change-point models with limited memory, we show that the optimal choices can be obtained by solving deterministic MDPs with finite state spaces. The upper bound analysis is one of the main technical challenges, since the number of switches between the two blocks until termination is random, and two stopping rules are active simultaneously when $\Xi_2$ is assigned.


In order to  show the tightness of the non-asymptotic upper bound and justify the above design, we consider an asymptotic framework where the model specifications are parametrized by the tolerance level  $\alpha$ on the false alarm rate, and both the expected detection delay and the expected time of  change  
are allowed to diverge as $\alpha \to 0$,  possibly at different rates.   Under this framework, we establish a universal asymptotic lower bound on the expected sample size as $\alpha \to 0$, whose proof combines techniques from  Bayesian sequential change detection \cite{tartakovsky2005general}, sequential experimental design \cite{chernoff1959}, as well as a coupling argument to upper bound the time between stopping and the change-point on the event of a false alarm.  By comparing the lower and upper bounds, we show that the proposed procedure, with the optimal selection of blocks and thresholds, achieves the optimal performance asymptotically as $\alpha \to 0$,  in the sense that the ratio of its expected sample size over the optimal 
goes to one.

Further, we consider the setup where the response model and the change-point model have unknown parameters. These parameters are assumed to be random with a given joint prior distribution,
which represents the uncertainty quantification in their offline estimation based on historical data. In this context, we extend the previous procedure and show that the proposed rule enjoys a valid false alarm control and fast computation due to Monte Carlo approximation for the posteriors. 
Finally, in simulation studies, we show that the performance of the proposed procedure is close to the optimal, uniformly with respect to the false alarm probability, under completely specified Markovian models. In the case of unknown parameters, it suffers a mild loss compared to an oracle with knowledge of the parameters, when the variance of the prior distribution is small.

\subsection{Outline}  
The remainder of the paper is organized as follows.  We formulate the problem in  Section \ref{sec:ProbFormulation}
and obtain the optimal solution under a general  Markovian change-point model in Section \ref{sec:dp}.  We introduce the proposed scheme  in Section \ref{sec:mastery}, discuss its design in Section \ref{sec:design}, and
  establish the asymptotic optimality in  Section \ref{sec:asymptotic_framework}.
 We consider models with unknown parameters in Section \ref{sec:distr_uncertainty},  
  present simulation studies in Section \ref{sec:simulation}, and conclude in Section \ref{sec:discussion}. Proofs are presented in the appendix.

\section{Problem formulation}\label{sec:ProbFormulation}
In this section, we introduce the setup and main assumptions (Subsection \ref{subsec:setup}),   formulate the problem of interest (Subsection \ref{subsec:formulation}), and discuss  some statistics that are used throughout the paper (Subsection \ref{subsec:posterior_odds}). 

\subsection{Setup} \label{subsec:setup}
Let  $(\Omega, \cF,\Pro)$  be a probability space that hosts all random variables we define in this work.  Let $\{L_t, t=0,1 \ldots\}$ be a latent stochastic  process  such that $L_t=0$ for every $t < \Theta$ and $L_t=1$ for every $t \geq  \Theta$. That is,  $\Theta$ denotes the latent time at which there is a change in the state of a system of interest. 
At each time $t \in \bN \equiv \{1,2,\ldots\}$,  we select a treatment, $X_t$,  and observe a  response, $Y_t$. For each $t \in \bN$, we denote by  $\cF_t$ the $\sigma$-algebra generated by the first $t$  responses and treatments, i.e.,  
\begin{align*} 
\mathcal{F}_t &\equiv \sigma(X_s, Y_{s},\; 1 \leq s \leq t),
\end{align*} 
and we set $\cF_0 \equiv \{\emptyset, \Omega\}$. We denote by $\pi_0$ the probability that the change has occurred before observing any response, and by  $\Pi_{t}$   the conditional probability that the change happens at time $t$ given the current treatment and all past treatments and responses, i.e., 
\begin{align*}
\pi_0  &\equiv \Pro(L_{0} = 1);\quad
\Pi_{t} \equiv\Pro(L_{t} = 1 \, \vert \,  L_{t-1} = 0, \cF_{t-1}, X_t ),\quad t \in \bN.
\end{align*}  

We assume that treatments are selected sequentially based on the already acquired responses and that the latent change-point, $\Theta$, can be inferred from the observed responses and influenced by the assigned treatments. Figure \ref{fig:model} provides a graphical illustration of this setup, which we define formally 
next by specifying the treatment assignment model, the response model, and the change-point model.  

\begin{figure}[tbp!]
\centering
\includegraphics[width=0.65\textwidth]{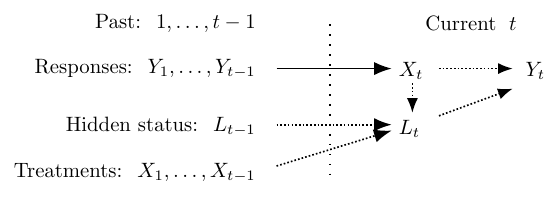}
\caption{An illustration of the proposed setup. The solid arrow is for the treatment assignment model, while the dotted arrows indicate the response model and the change-point model.} 
\label{fig:model}
\end{figure}

 \smallskip
 \noindent \textit{Treatment assignment model.} Consider $2 \leq K < \infty$ treatments. 
For each $t \in \bN$,   $X_t$ takes values in $[K] \equiv \{1,\ldots, K\}$, and $X_t = x$ for some $x \in [K]$ if and only if treatment $x$ is assigned at time $t$. For $t \geq 2$, we assume that 
 $X_t$  is a deterministic  function of  the first $t-1$ assigned treatments and observed responses,  $X_1, \ldots, X_{t-1}$ and $Y_{1}, \ldots, Y_{t-1}$ respectively,  whereas  $X_1$  is  deterministic.     That is,  $X_t$ is a $[K]$-valued, $\cF_{t-1}$-measurable  random variable for every $t \in \bN$.   We focus on deterministic assignment rules for simplicity, as the extension to randomized rules is straightforward and not necessary for our results. 

\smallskip
\noindent \textit{Response model.}  Without loss of generality, we assume that all responses take values in a common Polish space, denoted by $\bY$.  
For each treatment $x \in [K]$,  let $f_x$ and $g_x$ denote densities with respect to some $\sigma$-finite measure, $\mu$, on the Borel $\sigma$-algebra, $\cB(\bY)$, on $\bY$.  For each $t \in \bN$, we assume that the current response, $Y_t$,  is conditionally independent of the past given the current state of the system, $L_t$,  and the current treatment, $X_t$,  with density  $f_{X_t}$ before the change and  $g_{X_t}$  after the change. Formally,  for each $t \in \bN$ and  $B \in \cB(\bY)$ we have
\begin{equation*}
\Pro \left( Y_t \in B \, | \; \, X_t = x, \,L_t = i,\, \cF_{t-1},\, \{L_s\}_{0 \leq s \leq t-1} \right)  =
\begin{cases}
\int_B f_x \, d\mu, \quad i  = 0 \\
\int_B g_x \,d\mu, \quad i = 1
\end{cases}.
\end{equation*}

\noindent \textit{Change-point model.}  
For each  $ t \in \bN$, we assume that  the conditional probability of the change happening at time $t$ given the past observations and the event $\{L_{t-1} = 0\}$   is a function of only  the first $t$ assigned treatments, i.e., there exists a function $\pi_{t}: [K]^{t} \to [0,1)$ such that 
\begin{equation*}
\Pi_{t} =  \pi_{t}(X_1,\ldots, X_{t}).
\end{equation*}
Thus, the change-point model is determined by the prior probability $\pi_0\in [0,1)$   and the sequence of functions    $\{\pi_t, t \in \bN\}$. To emphasize that the distribution of the change-point,  $\Theta$,  depends on the treatment assignment rule,   $\cX \equiv \{X_t, t \in \bN \}$,  we   denote it by $\Theta_\cX$.

\smallskip
\noindent \textit{Examples.} We refer to a change-point model as  \textit{memoryless} if, for each $t \in \bN$,   $\Pi_t$ depends only on the  current treatment, $X_t$, i.e., if  there is a  link function $\Psi: [K] \to  [0,1)$ such that
\begin{align*}
\Pi_t  &= \Psi(X_t). 
\end{align*}

 We refer to a change-point model as  \textit{Markovian} if, for each $t\in \bN$,   
  $\Pi_t$ depends on the previous treatments, $X_1, \ldots, X_{t-1}$,  only via  a    {sufficient  statistic  of fixed dimension} $\kappa \geq 0$, i.e., if  there exist
 $\kappa$-dimensional random vectors,   $ \{\bS_{t}\}$, 
 such that  
\begin{align}\label{Markovian_change_point}
\begin{split}
\Pi_t  &= \Psi(X_t, \bS_{t-1}), \quad
 \bS_t = \Phi(X_t, \bS_{t-1} ) \;  \text{ for }\; t \in \bN, \quad \bS_0 = \zeta_0 \in \bR^\kappa,
 \end{split}
\end{align}
for some   link function $\Psi: [K] \times \bR^\kappa \to  [0,1)$  and transition function $\Phi : [K] \times  \bR^\kappa \to \bR^{\kappa}$. Note that $\Psi$ links $\Pi_t$ to the current treatment $X_t$ and the sufficient statistic $\bS_{t-1}$, while $\Phi$ updates the sufficient statistic, given $X_t$, from $\bS_{t-1}$ to $\bS_{t}$. 

Further, we say that a Markovian change-point model has \textit{finite memory}  if $\Pi_t$ is a function of the current and the  past $\kappa$ treatments, that is, if   the transition  function $\Phi$  is 
\begin{equation} \label{geometric_change_point}
\Phi(x, (x_\kappa,\ldots,x_1))
\; = \;(x,x_\kappa,\ldots,x_{2})   \;\;    \text{ for any } x,x_\kappa,\ldots x_1 \in [K],
\end{equation}
and the sufficient statistic  is  $\bS_{t} = (X_{t},\ldots,X_{t-\kappa+1})$ for $t \in \bN$.   Note that we may view the memoryless model as a finite memory model with $\kappa = 0$.

In the main text, we use the Markovian model and the finite memory model as running examples. In Appendices \ref{app:ex_exp} and \ref{app:ex_poly}, 
we present and study additional examples of change-point models, including a Markovian model with ``long memory'' and a non-Markovian one.

\subsection{Problem formulation}  \label{subsec:formulation} 
Our goal is to find a \textit{treatment assignment} rule and a \textit{stopping} rule so that the change occurs fast and we can detect its occurrence quickly and reliably.
Specifically, 
an admissible procedure is a pair 
$(\cX, T)$, where $\cX\equiv \{X_t, t \in \bN\}$ is  the sequence of assigned treatments  and   $T$ is an $\{\cF_t\}$-stopping time  at which we stop and declare that the change has occurred, i.e.,   $\{T=t\} \in \cF_t$ for every $t \geq 0$,
 where the $\sigma$-algebra $\cF_t$ is defined in the previous subsection. We  denote by $\cC$ the class of all such procedures and 
by  $\cC_{\alpha}$  the subclass of procedures  whose  probability of false alarm does not exceed $\alpha$, 
 i.e.,  
\begin{equation*}
\cC_{\alpha} \equiv 
\left\{
(\cX,T) \in \cC:\; \Pro(T < \Theta_{\cX}) \leq \alpha 
\right\},
\end{equation*}
where $\alpha \in (0,1)$ is a user-specified tolerance level. The objective  is to find  a procedure in $\cC_\alpha$ that minimizes the expected total number of observations, i.e., the one that attains
 \begin{equation} \label{infi}
\inf_{(\cX,T) \in \cC_{\alpha}} \;\;
\Exp \left[ T \right].
\end{equation}

\begin{remark}
For any procedure  $(\cX, T)$, the expected sample size  $\Exp[T]$ can be decomposed as follows:
 \begin{equation} \label{ess_decompose} 
\Exp[\Theta_{\cX}]  \,+\,   \Exp[\max\{T - \Theta_{\cX},0\}]  \,-\, \Exp[\max\{\Theta_{\cX} - T, 0\}].
\end{equation} 
The first term is the expected number of observations until the change,  and the second is the average detection delay. Note that $\max\{\Theta_{\cX} - T, 0\}$ is  non-zero only when a false alarm occurs, and thus the third term is
practically negligible for $(\cX, T) \in \cC_\alpha$ if $\alpha$ is small. Hence,  we refer to the problem in \eqref{infi} as  \textit{sequential change acceleration and detection}.
\end{remark}

\begin{remark}  It is tempting to consider the minimization of the sum of only the first two terms in  \eqref{ess_decompose}, i.e., $\Exp[\Theta_{\cX}]  +   \Exp[(T - \Theta_{\cX})^+]$, instead of  $\Exp[T]$. However, the latter is not only a simpler but also more rigorous problem formulation.  Indeed, while we define the treatment assignment rule  $\cX$ as an infinite sequence for mathematical convenience,  no treatment is actually assigned in practice after time $T$.  Therefore, even though the change-point $\Theta_{\cX}$ is well defined,  it does not have a physical interpretation on the whole sample space, as it is not actually ``realized'' on the event of a false alarm, i.e., when  $T < \Theta_{\cX}$. 
\end{remark}

Until Section \ref{sec:distr_uncertainty},  we assume that both the response and the change-point models are completely specified. This may be considered a realistic assumption when we have access to a large amount of historical data and the offline estimation has  negligible variance.    In Section \ref{sec:distr_uncertainty}, we consider the case that both models have unknown parameters, which are random with  a given joint prior distribution.


\subsection{Posterior odds}
\label{subsec:posterior_odds}
Given an assignment rule $\cX$,  
we denote by   $\Gamma_t$  the \textit{posterior odds} that the change has already occurred at time $t \geq 0$, i.e., 
\begin{equation}
\label{posterior_odds_prob}
\Gamma_t \; \equiv \frac{\Pro(L_{t} = 1 \, \vert \,  \cF_t)}{\Pro(L_{t} = 0 \, \vert \,  \cF_t)},
\end{equation}
with the understanding that   $\Gamma_0 \equiv    \pi_0 / (1-\pi_0)$.  Although $\{\Gamma_t\}$  depends on $\cX$, we do not emphasize this dependence to lighten the notation.

We conclude this section with two important lemmas,  whose proofs can be found in Appendix \ref{app:poster_odds}. Lemma \ref{lemma:recursive} shows 
that the posterior odds process admits a recursive form, which facilitates its computation and the analysis.    Lemma \ref{lemma:err_control} shows that the false alarm probability does not exceed $\alpha$ if the posterior odds process is at least $(1-\alpha)/\alpha$ at the time of stopping. 

\begin{lemma}\label{lemma:recursive}
For any  assignment rule $\cX$, for $t \in \bN$
\begin{equation*}  
\Gamma_t  \; = \;  (\Gamma_{t-1} + \Pi_t) \, 
  \frac{\Lambda_t}{1-\Pi_t}, \;\;\text{ where }\;\;
  \Lambda_t \equiv   \frac{g_{X_t}(Y_t)}{f_{X_t}(Y_t)}.
\end{equation*}
\end{lemma}

\begin{lemma} \label{lemma:err_control}
For any  assignment rule $\cX$ and  any  finite,  $\{\cF_t\}$-stopping time $S$,
\begin{equation*}
\Pro(S < \Theta_\cX \vert \cF_S)\;=\; {1}/{(1+ \Gamma_{S})}.
\end{equation*}
As a result, if $\Pro(\Gamma_{S} \geq b) = 1$ for some $b > 0$, then
$\Pro(S < \Theta_\cX) \leq  {1}/{(1 + b)}$.
\end{lemma}

\section{Optimal scheme in the Markovian case}\label{sec:dp}
In this section, we focus on the   \textit{Markovian} change-point model in \eqref{Markovian_change_point}.
We study an unconstrained sequential optimization problem
and discuss how its solution leads to the optimal procedure for \eqref{infi}.

\subsection{An auxiliary optimal stopping and control problem}
\label{subsec:auxilliary} We start by defining an auxiliary unconstrained sequential optimization problem,  in which the cost of each new observation is  $c >0$ and the cost of a  false alarm is $1$.  To be specific, when the prior belief $\Gamma_0$ and the initialization $\bS_{0}$ in \eqref{Markovian_change_point} take values $\gamma \in {\bR}_{+} \equiv [0,\infty)$ and $\zeta \in \bR^{\kappa}$, respectively,  we write $\Pro_{\gamma,\zeta}$ and $\Exp_{\gamma,\zeta}$ instead of $\Pro$ and $\Exp$.  We  define the {integrated cost} of a  procedure $(\cX,T) \in \cC$ to be $$\tilde{J}_c(\gamma,\zeta; \cX, T) \equiv c\, \Exp_{\gamma,\zeta}[T] + \Pro_{\gamma,\zeta}(T < \Theta_{\cX}),$$ 
and we  study the following problem:
\begin{equation}\label{J_star_cost}
\inf_{(\cX, T) \in \cC} \tilde{J}_c(\gamma,\zeta; \cX, T).
\end{equation}
 
We formulate this optimization problem in the language of {partially observable Markov decision processes} (PO-MDP) \citep{bertsekas1995dynamic,hernandez2012discrete}. 
Fix a procedure $(\cX,T) \in \cC$. For each $t \geq 0$, let $\Delta_t = \mathbbm{1}\{t > T\}$, where $\mathbbm{1}\{\cdot\}$ is the indicator function; that is, $\Delta_t$ indicates whether the process has terminated at time $t$. Thus, the triplet $(\Gamma_t,\bS_t, \Delta_t)$ of the posterior odds, the sufficient statistic in model \eqref{Markovian_change_point}, and the stopping indicator takes values in the \textit{state} space $\mathcal{S}  \equiv {\bR}_{+}\times \bR^{\kappa} \times \{0,1\}$. Further, we define the \textit{action} space $\mathcal{U} \equiv [K] \times \{0,1\}$, and we set  $${U_t \equiv  (U_t^{(1)}, U_t^{(2)})  \equiv (X_{t+1}, \mathbbm{1}\{T = t\}) \in \mathcal{U}, \quad 
 t \geq 0}.$$
That is, the first component {of $U_t$} identifies with the treatment assignment $X_{t+1}$, while the second  indicates whether the process is stopped at time $t$.

The transition dynamics of the states $\{({\Gamma}_t,\bS_t, \Delta_t), t \geq 0\}$ under actions $\{U_t, t \geq 0\}$ are as follows. Fix $t \geq 1$. 
Due to the definitions of the model \eqref{Markovian_change_point} and the stopping indicator, we have
\begin{align}\label{dp:transition_S_D}
\bS_{t} = \Phi(U_{t-1}^{(1)}, \bS_{t-1}), \quad
\Delta_t = 1  \quad \text{ if and only if }\quad 
 \Delta_{t-1} = 1 \text{ or } U_{t-1}^{(2)} = 1.
\end{align}
Further, in Appendix \ref{app:density_of_Y}, we show that the conditional $\mu$-density of $Y_t$ given  
$\cF_{t-1}$
is  $\phi(y;\Gamma_{t-1},\Pi_{t}, X_t)$, where  $\Pi_t = \Psi(X_t, \bS_{t-1})$ (see model \eqref{Markovian_change_point}) and the function $\phi$ is defined in Equation \eqref{app:Y_post_density} 
in Appendix \ref{app:density_of_Y}. 
Then,  by Lemma \ref{lemma:recursive},  for any Borel subset $B \subset  {\bR}_{+}$, 
\begin{align}\label{dp:transition_Gamma}
    \Pro\left(\Gamma_t \in B \, \vert \, \cF_{t-1} \right) = \int_{\bY} \mathbbm{1}\left\{  
  \frac{\Gamma_{t-1} + \Pi_t}{1-\Pi_t} \frac{g_{X_t}(y)}{f_{X_t}(y)}
  \in B \right\} \phi(y;{\Gamma}_{t-1},\Pi_{t},X_t) \; \mu(dy).
\end{align}
Since $X_t = U_{t-1}^{(1)}$, the right-hand side above only depends on the state and action at time $t-1$.

As a result, $\{({\Gamma}_t,\bS_t, \Delta_t), t \geq 0\}$ under actions $\{U_t, t \geq 0\}$ is a controlled  Markov decision process  \citep{bertsekas1995dynamic,hernandez2012discrete} with dynamics given by \eqref{dp:transition_S_D} and \eqref{dp:transition_Gamma}. Finally, we define the one-stage cost function $\tilde c: \mathcal{S} \times \mathcal{U} \to [0,\infty)$ as follows:
\begin{align*}
\tilde c({\Gamma}_t, \bS_t, \Delta_t; U_t) = \mathbbm{1}\{\Delta_t = 0\} \left( c \,  \mathbbm{1}\{ U_t^{(2)} = 0\} + 
(1+\Gamma_t)^{-1}  \, \mathbbm{1}\{U_t^{(2)} = 1\} \right).
\end{align*}
By Lemma \ref{lemma:err_control}, the second term inside the parentheses is the $\cF_t$-conditional  false alarm rate if the process is stopped at time $t$. Then,  by definition,
for each $(\gamma, \zeta, \delta) \in \mathcal{S}$, the integral cost 
$\tilde{J}_c(\gamma,\zeta; \cX, T) $ is given by $J_c(\gamma,\zeta,0;\cX,T)$, where
$$
J_c(\gamma,\zeta,\delta; \cX, T) \equiv \sum_{t=0}^{\infty}\Exp\left[  \tilde c({\Gamma}_t, \bS_t, \Delta_t; U_t) \; \vert \; ({\Gamma}_0, \bS_0,  \Delta_0) = (\gamma,\zeta,\delta) \right].
$$ 
Thus,  to solve \eqref{J_star_cost}, it suffices to find  $(\cX,T)$, or equivalently actions $\{U_t:t \geq 0\}$, that achieves
$$
J_c^*(\gamma,\zeta,\delta) \equiv \inf_{(\cX,T) \in \cC} J_c(\gamma,\zeta,\delta; \cX, T).
$$

Next, we apply the value iteration algorithm from dynamic programming literature \cite{bertsekas2022abstract} to solve the above problem. 
We denote by $\cJ$ the space of all non-negative measurable functions $ J:\mathcal{S} \to [0,\infty]$ such that $J(\gamma,\zeta,1) = 0$ for any $\gamma,\zeta$. For each   action $u \in \mathcal{U}$,  {we} define {the operator}  $\cT_{c,u}: \cJ \to \cJ$ as follows
\begin{align*}
    \cT_{c,u}(J)(\gamma,\zeta,0) \equiv 
\tilde{c}(\gamma,\zeta,0; u) +
\Exp\left[ J(\Gamma_1, \bS_1, \Delta_1)\
\vert\ ({\Gamma}_0, \bS_0,  \Delta_0) = (\gamma,\zeta,0),\ U_0 = u \right].
\end{align*}
That is, given the current state $(\gamma,\zeta,0)$ and a cost function $J$, $T_{c,u}(J)(\gamma,\zeta,0)$ is the sum of the current cost and the {expected} one-step cost under the action $u$. Note that once the process has stopped, no further cost is incurred, that is, $\cT_{c,u}(J)(\gamma,\zeta,1) = 0$ for any $\gamma,\zeta$.
Further, {we} define the Bellman operator, which corresponds to the cost under the optimal decision over $\cU$, as follows:
$\cT_{c}: \cJ \to \cJ$, 
$$\cT_{c}(J) \equiv \min_{u \in \cU} \cT_{c,u}(J), \quad J \in \cJ.
$$


Since the cost function $\tilde{c}(\cdot)$ is non-negative, this Markov decision process enjoys the ``Monotone Increase'' structure \cite[Assumption I in Chapter 4.3]{bertsekas2022abstract} (see Appendix \ref{app:mi} for verification). As a result, by 
\cite[Proposition 4.3.3]{bertsekas2022abstract}, the optimal cost function  $J_c^*$  satisfies the  {Bellman equation}: $\cT_c(J_c^*) = J_c^*$. Further,   since $\cU$ is finite, the compactness assumption in \cite[Proposition 4.3.14]{bertsekas2022abstract} clearly holds, and thus  $J_c^*$ can be computed by repeatedly applying the operator  $\cT_c$ with the initial function being the constant zero function, i.e., 
\begin{equation*}
\lim_{t \to \infty} \cT_{c}^{\bigotimes t}(0)(\gamma,\zeta,\delta) = J_c^*(\gamma,\zeta,\delta)\;\; \text{ for all }  (\gamma,\zeta,\delta) \in \mathcal{S},
\end{equation*}
where $0$ is  the zero function in $\cJ$, and $\cT_{c}^{\bigotimes t}$  the operator on $\cJ$ obtained by composing $\cT_c$  with itself for $t$ times. 

Once $J_c^*$ is solved, a  procedure $(\cX^*_c, T_c^*)$ that  achieves \eqref{J_star_cost}  can be obtained as follows. For $t \geq 0$, if the process has not been terminated, the cost of stopping is $1/(1+ \Gamma_t)$, while the optimal cost with  assigning treatment $k \in [K]$ is 
$\cT_{c,u}(J_c^*)(\Gamma_t,\bS_t, 0)$ with $u = (k,0)$. Thus, we stop  the first time that the  cost due to stopping does not exceed  the  {optimal cost}, and otherwise 
select the treatment that minimizes the expected future cost  \citep{bertsekas1995dynamic,hernandez2012discrete}, i.e.,  
\begin{align}\label{dp_optimal}
\begin{split}
    &T_c^* = \inf  \left\{t \geq 0: 1/(1 + {\Gamma}_t) \leq J_c^*({\Gamma}_t,\bS_t,0) \right\},\\
&X_{t+1,c}^{*} = \arg \min_{k \in [K]}\; {\cT}_{c,(k,0)}(J_c^*)({\Gamma}_{t}, \bS_{t},0).
\end{split}
\end{align}

 The next theorem, whose proof can be found in Appendix \ref{app:proof_structural},  establishes the following structural result: $T_c^*$ is the first time $t$ that the posterior odds ${\Gamma}_t$ exceeds a  deterministic function of the sufficient statistic $\bS_t$.

\begin{theorem}\label{dp_structural}
Assume the change-point model is Markovian as defined in \eqref{Markovian_change_point}, and consider the procedure 
$(\cX^*_c, T_c^*)$  defined in    \eqref{dp_optimal}, that solves the auxiliary optimization problem     \eqref{J_star_cost} for some $c > 0$. Then there exists  a function $b_c:\bR^{\kappa} \to \bR_{+}$ such that
$$
T_c^* \;= \; \inf\{\,t \geq 0: {\Gamma}_t \geq b_c(\bS_t)\,\}.
$$
In the special case of the memoryless model (i.e., $\kappa=0$), the function $b_c$ is a constant. 
\end{theorem}

 The computation of  $(\cX_c^*, T_c^*)$ requires  repeated application of the Bellman operator $\cT_c$, for which  we need to discretize the state space ${\bR}_{+}\times \bR^{\kappa}$ and  use interpolation in evaluating the expectation in $\cT_{c,u}$ for $u \in \cU$. This can become   challenging when the dimension of the sufficient statistic, $\kappa$, is large, or when the density $\phi$ in \eqref{dp:transition_Gamma}    has a complicated form.

\subsection{Optimality for the original problem}\label{subsec:limitations}

By definition, the optimal pair $(\cX_c^*,T_c^*)$ for the auxiliary optimization problem \eqref{J_star_cost} also solves the original problem
\eqref{infi} for a given tolerance level $\alpha \in (0,1)$ if  we can set $c=c({\alpha})$, where $c({\alpha})$ is such that the false alarm constraint is satisfied with equality, i.e.,
{$\Pro ( T_{c(\alpha)}^{*} < \Theta_{\cX^*_{c(\alpha)}})  = \alpha$.}
To determine $c(\alpha)$ we need to {compute}  $(\cX_c^*,T_c^*)$  for a {wide range} of values of $c$,  and then to evaluate \textit{for each of them}  the associated probability of false alarm via {simulation}. Clearly, this task can be   computationally demanding, especially when this is the case for the computation of 
$(\cX_c^*,T_c^*)$  for a single $c$.

\section{Proposed procedure} \label{sec:mastery}

In addition to the computational burden, the PO-MDP approach in Section \ref{sec:dp} has two inherent limitations. First, it is restricted to Markovian change-point models, thus it does not apply to, for example, the one in Appendix \ref{app:ex_poly}, whose transition probabilities do not admit sufficient statistics of  fixed dimension.   Second, even for the simple 
memoryless change-point model, there is no explicit form for the optimal assignment rule, thus,   it does not offer intuition about the selection of treatments. 

In this section,    we propose an alternative procedure with an explicit assignment rule, which is applicable beyond  Markovian change-point models and is based on a simple idea:  while there is not sufficient evidence that the change has occurred, the treatments should be selected to {facilitate}  the change, whereas when there is strong evidence that the change has occurred, the treatments should be selected to enhance its detection.

Specifically,  for $\ell \in \bN$,  we refer to an element of $[K]^\ell$,   i.e., an ordered list of  $\ell$ treatments, as a \textit{block} (of length $\ell$). While  the posterior odds process, $\{\Gamma_t\}$ in \eqref{posterior_odds_prob},   is below some threshold $b_1\geq 1$, we apply  repeatedly a certain block of  treatments,  $\Xi_1$.  Once $\{\Gamma_t\}$ exceeds  $b_1$, we apply repeatedly a different  block of treatments,  $\Xi_2$, and  if   $\{\Gamma_t\}$  exceeds a higher threshold $b_2 \geq b_1$,  we stop and declare that the change has occurred. However, since the unobservable change may not have occurred when we switch to block $\Xi_2$, and may not happen while applying this block, we need a mechanism 
for switching back to  $\Xi_1$.  For this purpose, we use the likelihood ratio of the responses to block $\Xi_2$ for testing the hypothesis that the change has occurred. If this statistic becomes smaller than a threshold $1/d \leq 1$ \textit{before} $\{\Gamma_t\}$ exceeds $b_2$,  we switch back to block $\Xi_1$ and repeat the previous process.  Thus, the proposed procedure alternates between \textit{acceleration stages}, i.e., time periods in which block $\Xi_1$ is applied repeatedly, and \textit{detection stages}, i.e.,  time periods in which block $\Xi_2$ is applied repeatedly.  We refer to an acceleration stage together with its subsequent detection stage as a   \textit{cycle} and denote by $N$ the random number of {cycles} until stopping. 

We provide a  graphical illustration of the proposed algorithm in Figure \ref{fig:proc}. For the formal definition, we introduce additional notations. For an arbitrary block  $\Xi \in [K]^\ell$,  we denote by $|\Xi|$ its size, i.e., $|\Xi|=\ell$,  by   $\Xi(i)$ its $i^{th}$ component,  and  adopt the following cyclic convention:
\begin{equation*}
\Xi(j\ell + i) \;\equiv\; \Xi(i)\; \text{ for each integer } j \;\text{ and }\; 1 \leq i \leq \ell.
\end{equation*}
We set
$\Xi(i:j) \equiv (\Xi(i),\ldots,\Xi(j))$   for  arbitrary  integers $i,j$.   Moreover,    we define  the following random  times 
\begin{align} \label{def:detection_testing_rules}
\sigma(t; b) &\equiv \inf\{s \geq 0: \Gamma_{t + s} \geq b \},  
\quad \;\;
\tau(t;d) \equiv \inf \{s \geq 1:\; \prod_{j=1}^{s} 
 \Lambda_{t+j}  \; \leq 1/d \; \},  
\end{align}
where $\{\Gamma_t\}$ and $\{\Lambda_t\}$  are defined in \eqref{posterior_odds_prob} and Lemma \ref{lemma:recursive}
 respectively.  Thus, $\sigma(t; b)$ 
represents  the number of observations after time $t$  required by the posterior odds process   to cross some threshold $b$, and $\tau(t;d)$ the number of observations  {after time $t$} until  the
likelihood ratio statistics of the observations go under   $1/d$.

\begin{figure}[!tbp]
\centering
\includegraphics[width=0.8\textwidth]{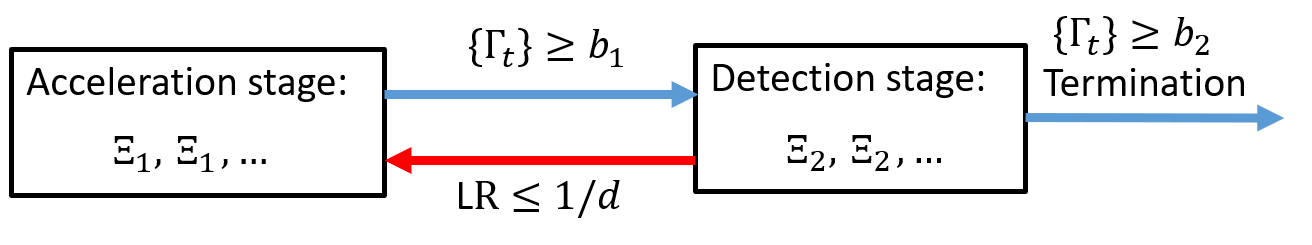}
\caption{An illustration of the proposed procedure. ``LR'' is short for the Likelihood Ratio statistic of the pre-change hypothesis against the post-change hypothesis.}
\label{fig:proc}
\end{figure}

\subsection{Definition}\label{subsec:proc_def}
We set $S_0 \equiv 0$, and {for each  $n \in \bN$ we   denote by  ${S}_n$   the   time of} the \textit{end} of the $n^{th}$ stage.    Then,    the assignment rule  $\mwd{\cX} \equiv \{\mwd{X}_t, t \in \bN\}$ is as follows:
 \begin{align*} 
\mwd{X}_t \equiv&
\begin{cases} 
\Xi_1\left( t-S_{2m-2} \right),  &\text{ if } \quad  t \in (S_{2m-2}, S_{2m-1}] \quad \; \text{ for some }  \; m \in \bN\\
\Xi_2\left( t-S_{2m-1} \right),  &\text{ if } \quad   t \in (S_{2m-1}, S_{2m}] \; \qquad  \text{ for some } \;  m \in \bN
\end{cases}.
\end{align*}
The times $\{S_n: n \in \bN\}$ are defined recursively. 
We stop the   acceleration stage of the $m^{th}$ cycle as soon as the posterior odds process  exceeds   $b_1$, that is,
$$
{S}_{2m-1} = {S}_{2m-2} + \sigma({S}_{2m-2};b_1), \text{ for } m \geq 1,
$$
and stop the  detection stage  of the $m^{th}$ cycle  
as soon as either the posterior odds process exceeds  $b_2 \geq b_1$, or the likelihood ratio of the observations \textit{during this stage} becomes smaller than or equal to   $1/d$, that is, 
$$
S_{2m} = {S}_{2m-1} + \min\{ \sigma({S}_{2m-1}; b_2),\  \tau({S}_{2m-1};d)\}, \text{ for } m \geq 1. 
$$
More compactly,  for each stage $n \in \bN$ we have 
\begin{equation*}
{S}_n = {S}_{n-1} + 
\begin{cases}
 \qquad \quad \sigma({S}_{n-1};b_1),  \qquad \qquad \qquad  n \text{ is odd}\\
\ \min\{\sigma({S}_{n-1}; b_2),\ \tau({S}_{n-1};d)\}, \quad n \text{ is even}
\end{cases}.
\end{equation*}
Finally,    we  terminate the whole process  at the end of the first cycle in which the detection is positive, that is,
 $\mwd{T} \equiv S_{2 N}$, where the number of cycles until stopping is
\begin{align*}
\begin{split}
N  &\equiv   \inf\left\{m \geq 1: \sigma({S}_{2m-1}; b_2) \leq \tau({S}_{2m-1};d)\right\} 
= \inf\{m \geq 1: \Gamma_{S_{2m}}  \geq b_2\}.
\end{split}
\end{align*} 
The  proposed  procedure $(\mwd{\cX}, \mwd{T})$  is illustrated   in Figure \ref{fig:one_run}.

\begin{figure}[!tbp]
\centering
\includegraphics[width=0.93\textwidth]{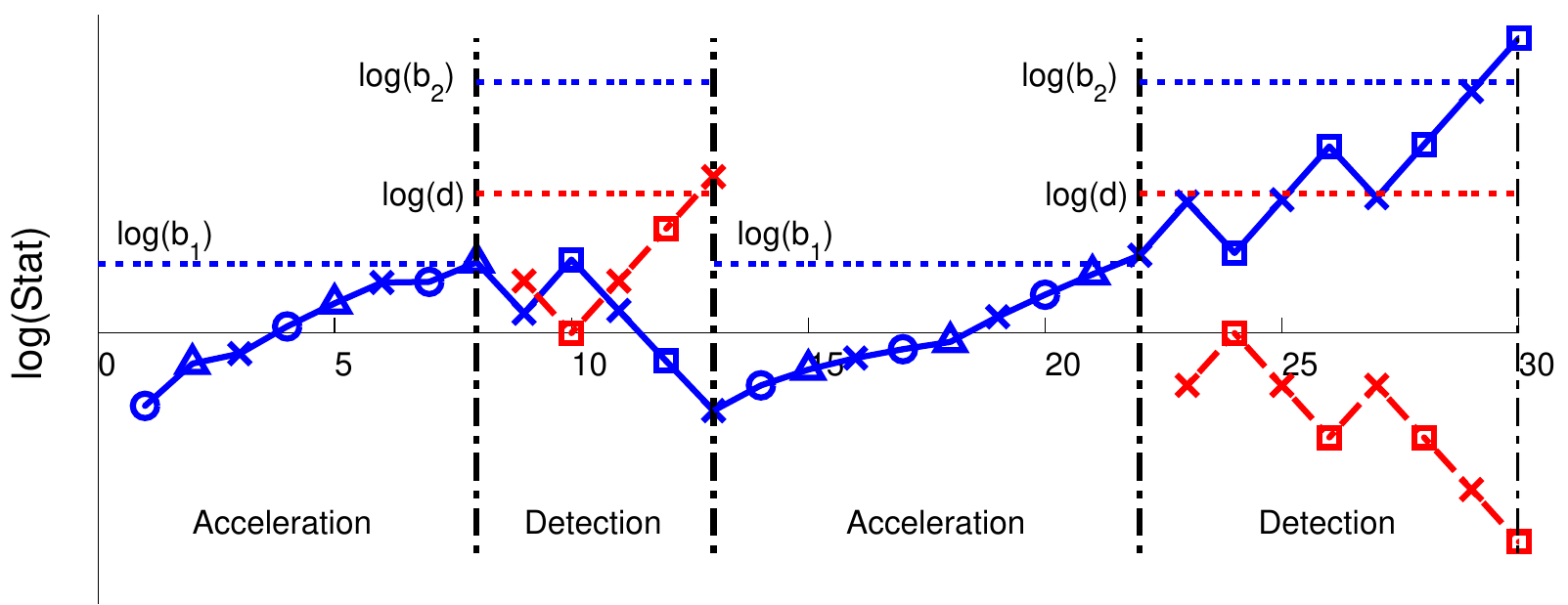}
\caption{A  simulation run of the proposed procedure. The treatments are represented by different shapes, 
$\Circle, \Delta,\times, \square$.  The acceleration block is 
 $\Xi_1 = [\Circle, \Delta, \times]$ and the detection block is 
$\Xi_2 = [\times, \square]$.  The solid line is the logarithm of the posterior odds process and the dashed line is the \textit{minus} of the logarithm of the likelihood ratio statistic in \eqref{def:detection_testing_rules}.
In an acceleration stage, we use block $\Xi_{1}$, wait until the posterior odds crosses $b_1$, and then switch to a detection stage. In a detection stage, we use block $\Xi_{2}$ and run both the detection rule with parameter $b_2$ and the testing rule with parameter $d$.
If the testing rule stops earlier, as in the first detection stage of this figure, we switch back to an acceleration stage; otherwise, we terminate the process, as in the second detection stage above.
}
\label{fig:one_run}
\end{figure}

\subsection{An extension}\label{subsection:minor_etension}
We assume above that the same block, $\Xi_1$, is applied in each acceleration stage and the same block, $\Xi_2$, in each detection  stage. With an appropriate design of this scheme, the change typically happens and is detected during the first cycle.  Thus, it can be useful to start in the \textit{first acceleration stage} with some alternative, finite, deterministic sequence of treatments before applying block $\Xi_1$.  
That is,   the assignment rule 
$\mwd{\cX}$ {can be} modified only from time $1$ to $S_1$ as follows: 
\begin{align}\label{mod_rule}
\mwd{X}_{t} = 
\begin{cases}
z_0(t) \qquad\quad\;  \text{ if } \quad 1 \leq t \leq \min\{t_0, S_1\}\\
\Xi_1(t-t_0) \quad \text{ if } \quad  t_0 < t \leq  S_1
\end{cases},
\end{align}
for some fixed integer $t_0 \geq 0$ and $z_0 \in [K]^{t_0}$. 
This modification is important for the finite memory model \eqref{geometric_change_point}; see Subsections \ref{subsec:example_finite_memory} and \ref{subsec:simulation_geo} for more details.



\section{Design of the proposed procedure}  \label{sec:design}
To implement the  procedure $(\mwd{\cX}, \mwd{T})$ introduced in  Section \ref{sec:mastery},  we need to  specify its thresholds, $b_1,b_2,d$, and   blocks, $\Xi_1, \Xi_2$. Since the posterior odds process at the time of stopping, $\mwd{T}$, is by definition larger than or equal to $b_2$,  it follows from Lemma \ref{lemma:err_control}  that  if we set 
\begin{align}\label{eq:bK_sel}
b_2= {(1-\alpha)}/{\alpha},
\end{align} 
then  $(\mwd{\cX},\mwd{T}) \in \cC_\alpha$.  That is, this choice of $b_2$ alone guarantees that the false alarm constraint is satisfied,  for any given $\alpha \in (0,1)$,  \textit{irrespectively of how  $\Xi_{1},\Xi_{2}$ and  $b_1, d$ are chosen}.   We select these ``free'' parameters based on a non-asymptotic upper bound on the expected sample size of this procedure, which is established in Subsection \ref{subsec:upper_bound}.  We obtain a closed-form expression for values of  $b_1, d$ that minimize approximately this upper bound for any \textit{given} blocks $\Xi_{1}, \Xi_{2}$ in Subsection \ref{subsec:thres}, and then select the blocks to minimize the resulting upper bound in Subsection \ref{subsec:block}.  This analysis is conducted under certain assumptions on the response and change-point models, which are described in Subsection \ref{subsec:assump}.

\subsection{Assumptions} \label{subsec:assump}
For each $x \in [K]$, we denote the  Kullback-Leibler divergences between the response densities $g_x$ and $f_x$ as:
\begin{align}\label{def:KL}
I_x \equiv \int_{\bY} \left( \log \frac{g_x}{f_x} \right)\, g_x\, d\mu,\qquad
J_x \equiv \int_{\bY} \left( \log \frac{f_x}{g_x} \right)\, f_x\, d\mu.
\end{align}
They are positive and finite under the following    assumption: for each $x \in [K]$,
\begin{align}
0 < V_x^{I}  \equiv \int_{\bY} \left( \log \frac{g_x}{f_x} - I_x\right)^2  g_x\, d\mu < \infty, \quad 
V_x^{J} \equiv \int_{\bY} \left( \log \frac{f_x}{g_x} - J_x\right)^2 f_x\, d\mu < \infty. \label{assumptions:response_KL}
\end{align}

Moreover,  we impose a stability assumption on the transition probabilities so that if a  block is assigned repeatedly after some time $t$, starting from any of its components, the transition probability eventually stabilizes in an average sense.
Specifically, we  assume that for each $\Xi  \in [K]^{\ell}$, there exists  a non-negative number  $d(\Xi)$  such that 
\begin{equation} \label{assumption_stability_pointwise}
\lim_{M \to \infty} \frac{1}{M} \sum_{s = 1}^{M} \omega 
\left( \pi_{t+s} \left(z, \Xi (j:
s+j-1) \right) \right)= d(\Xi), \quad \text{ with } \;\omega(x) \equiv |\log(1-x)|,
\end{equation}
for every $t \geq 0$, $z \in [K]^t$ and $j \in [\ell]$.
In fact, we require this convergence to be uniform, in the sense that for any $\epsilon > 0$ and    block $\Xi  \in [K]^{\ell}$, there exists  a positive integer $M(\Xi,\epsilon)$ such that for any $M \geq M(\Xi,\epsilon)$,
\begin{equation}\label{assumption_stability}
\sup_{t \geq 0,  z \in [K]^{t},  j \in [\ell]} \left\vert \frac{1}{M} \sum_{s = 1}^{M} \omega \left(
\pi_{t+s} \left(z, \Xi(j:
s+j-1) \right) \right) - d(\Xi)  \right\vert  \leq \epsilon.
\end{equation}
Note that condition \eqref{assumption_stability_pointwise} is  assumed in \cite[see equation (2.6)]{tartakovsky2005general}, where the  Bayesian sequential change detection problem is considered in the absence of design.

\subsection{An  upper bound} \label{subsec:upper_bound}
For any  block  $\Xi \in [K]^{\ell}$, we define  the average information numbers and second moments over the treatments in block $\Xi$ as follows:
$$I(\Xi) \equiv {\frac{1}{\ell} } \sum_{i=1}^{\ell} I_{\Xi(i)}, \;\; \;
J(\Xi) \equiv {\frac{1}{\ell} } \sum_{i=1}^{\ell} J_{\Xi(i)},
\;\;\;
V^{I}(\Xi) \equiv {\frac{1}{\ell} } \sum_{i=1}^{\ell} V^{I}_{\Xi(i)}, \;\;\;
V^{J}(\Xi) \equiv {\frac{1}{\ell} } \sum_{i=1}^{\ell} V^{J}_{\Xi(i)}.
$$

Further, we define the \textit{adjusted information number} of a block $\Xi$, which combines  information from  both the response and change-point models,  as follows:
\begin{equation}\label{adjust_info_num}
\Dro(\Xi) \equiv I(\Xi) + d(\Xi).
\end{equation}

We denote by  $\lambda(\Xi)$ the expected time of change
when   block $\Xi$ is  repeatedly assigned \textit{from the beginning}. It is given by the following formula: 
\begin{align}\label{block_expected_change}
\lambda(\Xi) &\equiv (1 - \pi_0)\left(1 +  \sum_{t=1}^{\infty}
\prod_{s=1}^{t}\left(1-\pi_s(\Xi(1:s))\right) \right).
\end{align}

Further, we denote by  $\widetilde{\lambda}(\Xi)$ an \textit{upper bound} on the expected time of change if block $\Xi$ is repeatedly assigned after some initial period. Specifically, for each integer $m \geq 0$ and a sequence of treatments $z \in [K]^{m}$ of length $m$, define
$$
\widetilde{\lambda}(\Xi; m, z) \equiv 1+ \sum_{t=1}^{\infty}
\prod_{s=1}^{t}\left(1-\pi_{m+s}(z, \Xi(1 : s))\right),
$$
which is the expected duration between the time $m$ and the change-point  if the initial $m$ treatments are given by $z$ and the block $\Xi$ is assigned afterwards. Then we define
\begin{align}\label{block_upper}
\widetilde{\lambda}(\Xi)  &\equiv \sup_{m \geq 0, z \in [K]^{m}} \widetilde{\lambda}(\Xi; m, z),
\end{align}
which is the worst case over all possible initial periods.

Finally,  we denote by $\zeta(\Xi)$ a lower bound on the maximal transition probability within two blocks of $\Xi$, uniformly over the time $t \geq 0$, previous assignments $z \in [K]^t$, and the  starting position $j \in [|\Xi|]$, i.e.,
\begin{equation}
\label{def:zeta}
\zeta(\Xi) \equiv \inf_{t\geq 0,\;z\in [K]^{t}, \;1 \leq j \leq |\Xi|}\;\; \left[\max_{1 \leq s \leq  2|\Xi|} \, \pi_{t+s}(z,\Xi(j:j+s-1))\right].
\end{equation}

\begin{remark} 
The proposed procedure may terminate in  more than one cycle, although the probability of this event is small for large values of $b_1,d$; see  Appendix \ref{subsec:upp_cycles}.  In the analysis, we use $\widetilde{\lambda}(\Xi_1)$  and $\zeta(\Xi_1)$ \emph{only} for acceleration stages, \emph{excluding the first one}.  Specifically,  $\widetilde{\lambda}(\Xi_1)$ is used to upper bound the expected time of change. Further,  recall the recursive formula for the posterior odds in Lemma \ref{lemma:recursive}. The role of $\zeta(\Xi_1)$  is to lower bound the \emph{maximum} value of $(\Gamma_{t-1} + \Pi_t)$ over the initial $2|\Xi_1|$ time points of an acceleration stage.
\end{remark}

The following theorem establishes an upper bound on the expected total number of observations required by the proposed procedure. Its proof is presented in 
   Appendix \ref{app:proofs_regarding_upper_bound}.

\begin{theorem}\label{thm:upper_bound_T}
Suppose that the response model satisfies condition 
\eqref{assumptions:response_KL} and that the change-point model satisfies condition \eqref{assumption_stability}. 
Let  $\Xi_1$ and $\Xi_2$ be  blocks of length $\ell_1$ and $\ell_2$, respectively. Then, for any $\epsilon > 0$, $b_2 \geq b_1 > 1$, and $d > 1$, we have
$$
\Exp[\mwd{T}] \leq {\cU}(b_1,b_2,d)\,(1+\epsilon) + \frac{(1+\epsilon)^2}{1-\eta(b_1, d)} \mathcal{R},
$$
where \,  $\eta(b_1, d)  \equiv (b_1 + d)/ (d(1+b_1))$,  
\begin{align*}
{\cU}  (b_1,b_2, & d) 
\equiv \left( \lambda(\Xi_1) + \frac{\log(b_2)}{\Dro(\Xi_2)}\right) +  \frac{\eta(b_1,d)}{1-\eta(b_1,d)} \left(\widetilde{\lambda}(\Xi_1) + \frac{\log(b_2)}{\Dro(\Xi_2)} \right)   \\
&+ \frac{1}{1-\eta(b_1,d)} \left[
 \log(b_1)\left(\frac{1}{\Dro(\Xi_1)} - \frac{1}{\Dro(\Xi_2)} \right) +
  \frac{\log(d)}{1+b_1}\left( \frac{1}{\Dro(\Xi_2)}+\frac{1}{J(\Xi_2)} \right)  \right],
  \end{align*}
  and 
  \begin{align*}
\mathcal{R} &\equiv M\left(\Xi_{1},\frac{\epsilon \Dro(\Xi_1)}{1+\epsilon} \right) + M \left(\Xi_{2}, \frac{\epsilon\Dro(\Xi_2)}{1+\epsilon} \right) + 3\ell_1 +2\ell_2 \\
&+\left(\frac{V^{I}(\Xi_1)}{(\Dro(\Xi_{1}))^2} + \frac{V^{I}(\Xi_{2})}{(\Dro(\Xi_{2}))^2} + \frac{V^{J}(\Xi_{2})}{(J(\Xi_{2}))^2} \right) + \frac{|\log(\zeta(\Xi_{1}))|}{\Dro(\Xi_{1})}.
\end{align*}
Further, if  the treatment assignment rule is modified as in Subsection \ref{subsection:minor_etension}, the
upper bound   holds with $\lambda(\Xi_1)$  replaced by
$\lambda(z_0, \Xi_1)$, which is  defined as follows:
\begin{align}\label{modified_lambda}
\begin{split}
&(1 - \pi _{0})  +(1 - \pi _{0})  \sum
_{t=1}^{t_0} \prod_{s=1}^{t}
\bigl(1-\pi _{s}\bigl(z_{0}(1:s)\bigr) \bigr) \\
& + (1 - \pi _{0}) \sum
_{t=t_0+1}^{\infty} \prod_{s=1}^{t_{0}}
\bigl(1-\pi _{s}\bigl(z_{0}(1:s)\bigr) \bigr) \prod
_{s=t_{0}+1}^{t} \bigl(1- \pi
_{s}\bigl(z_{0}, \Xi _{1}(1:s-t_{0})
\bigr) \bigr). 
\end{split}
\end{align}
\end{theorem}

The above upper bound is non-asymptotic. In Section \ref{sec:asymptotic_framework}, we consider an asymptotic framework, where we let the tolerance level $\alpha$ vanish and allow the change-point model to depend on  $\alpha$. Under this framework,  we show that ${\cU}(b_1,b_2,d)$ is the dominant term in the upper bound as $\alpha \to 0$, and thus we select the blocks $\Xi_1, \Xi_2$ and the thresholds $b_1,d$ to minimize ${\cU}(b_1,b_2,d)$, which is discussed in Subsection \ref{subsec:thres} and \ref{subsec:block}.

We do not attempt to optimize the upper bound over $\epsilon > 0$, since the speed of uniform convergence $M(\Xi, \epsilon)$ in \eqref{assumption_stability} diverges as $\epsilon \to 0$. In the asymptotic analysis, we show that, for each fixed $\epsilon > 0$, the ratio  $\Exp[\tilde{T}]/{\cU}(b_1,b_2,d)$ is asymptotically upper bounded by $1+\epsilon$ as $\alpha \to 0$. Since this ratio does not depend on $\epsilon$, we then let $\epsilon \to 0$.

\subsection{{Thresholds} selection} \label{subsec:thres}
First, we {select}  $b_2$ as in \eqref{eq:bK_sel} to  control the false alarm rate below $\alpha$. Then,  for  \textit{given} blocks $\Xi_1, \Xi_2$, we  select  $b_1, d$  to  minimize $\cU(b_1,b_2,d)$, as discussed above. The minimization can be done numerically, but it is possible to obtain a closed-form expression for approximate minimizers.  Indeed,  as long as the block used in the detection stages  has a better detection power than the block  used in the acceleration  stages, in the sense that $\Dro(\Xi_{2}) \geq \Dro(\Xi_{1})$,   then, as we explain in Appendix \ref{discussion_b1_d_sel},
\begin{align}\label{eq:b1_d_sel}
\begin{split}
b_1 &= \min\left\{b_2, \;\; 
\frac{\widetilde{\lambda}(\Xi_{1}) + {\log(b_2)}/{\Dro(\Xi_2)}}{ {1/}{\Dro(\Xi_1)} -{1}/{\Dro(\Xi_2)} }-1
\right\}, \quad
d = b_1 \,  \frac{\widetilde{\lambda}(\Xi_{1}) + \log(b_2)/{\Dro(\Xi_{2})}}{1/\Dro(\Xi_{2}) + 1/J(\Xi_2)}
\end{split}
\end{align}
are approximate minimizers of  $\cU(b_1,b_2,d)$ for small values of $\alpha$. 

For simplicity of presentation, we assume that $\Dro(\Xi_{2}) \geq \Dro(\Xi_{1})$ holds in what follows. 
Indeed, if $\Dro(\Xi_{2}) < \Dro(\Xi_{1})$, the upper bound in Theorem \ref{thm:upper_bound_T} suggests choosing $b_1 = b_2$, which implies that the duration of detection stages is zero and block $\Xi_2$ is actually never assigned. That is, when block $\Xi_1$ has a better detection power than $\Xi_2$, we would not switch to block $\Xi_2$ and instead have a single stage with block $\Xi_1$.

\subsection{Blocks selection} \label{subsec:block}
It remains to select the blocks $\Xi_1$ and $\Xi_2$. For this, we rely on the fact, which is  formally justified under the asymptotic framework in Section \ref{sec:asymptotic_framework},  that  when $b_2$ is  selected according to \eqref{eq:bK_sel} and 
 $b_1, d$ according to  \eqref{eq:b1_d_sel},  the dominant term in the upper bound ${\cU}(b_1,b_2,d)$ for small  values of $\alpha$    is given by {
${\lambda}(\Xi_1) + |\log(\alpha)|/\Dro(\Xi_2)$},  
 where  $\Dro(\Xi_2)$ is the adjusted information number of $\Xi_2$,  defined in  \eqref{adjust_info_num},
  and ${\lambda}(\Xi_1)$ the expected time of change under $\Xi_1$, defined in 
  \eqref{block_expected_change}; in the case of the  modified assignment rule in \eqref{mod_rule}, ${\lambda}(\Xi_1)$  is replaced by $\lambda(z_0,\Xi_1)$, defined in \eqref{modified_lambda}. Thus, we propose to select the blocks $\Xi_1$ and $\Xi_2$ as the solutions to the following optimization problems:
\begin{equation}
\label{prop_opt_blocks_sel}
\;\;
\Xi_1 \;=\; \underset{\Xi \in [K]^{\ell},\, \ell \in \bN}{\arg\min}\, \lambda(\Xi),\qquad
\;\;\Xi_2 \;=\; \underset{\Xi \in [K]^{\ell}, \, \ell \in \bN}{\arg\max}\, \Dro(\Xi).
\end{equation}
In  Section \ref{sec:asymptotic_framework}, we show that when the  blocks are  selected in this way, the proposed procedure
 achieves the optimal performance in an asymptotic sense.

As we show in the next subsection,   both optimization problems in  \eqref{prop_opt_blocks_sel}  can be solved efficiently for the finite memory change-point model in \eqref{geometric_change_point}. The solutions under two other change-point models are discussed in  Appendices \ref{app:ex_exp} and \ref{app:ex_poly}.  
In general, if it is difficult to solve  \eqref{prop_opt_blocks_sel} exactly, one may instead find approximate solutions by exhaustive search over a finite collection of candidates. For example, we may restrict the lengths of the two blocks below some  integer $\ell_{\max}$ and  select $\Xi_1, \Xi_2$ as follows:
\begin{equation*}
\Xi_1 \;=\; \underset{\Xi \in [K]^{\ell},  \, \ell \leq \ell_{\max}}{\arg\min}\, \lambda(\Xi),\qquad
\Xi_2 \;=\; \underset{\Xi \in [K]^{\ell},\, \ell \leq \ell_{\max}}{\arg\max}\, \Dro(\Xi).
\end{equation*}

Finally, we stress that the two blocks can always be supplied by practitioners. As mentioned earlier,  the validity of the method is guaranteed by selecting $b_2$ according to \eqref{eq:bK_sel}, and the flexibility in block specifications may be desired in practice.

\subsection{Example: the finite memory model}\label{subsec:example_finite_memory}
In this subsection, we illustrate the design of the proposed procedure for 
 the finite memory change-point model in \eqref{geometric_change_point},
 according to which the current transition probability depends only on the current and previous $\kappa \geq 0$ treatments through a link function $\Psi$.

We first discuss the computation of $b_1,d$ using the formulae in      \eqref{eq:b1_d_sel} for given blocks $\Xi_1, \Xi_2$.
Fix a block $\Xi$. Note that $I(\Xi), J(\Xi)$ and  $\mwd{\lambda}(\Xi)$  are obtained by   \eqref{def:KL} and \eqref{block_upper} respectively, {and}  recall that $I(\Xi)$ (resp. $J(\Xi)$) is the average of the Kullback-Leibler divergences $\{I_x: x \in \Xi\}$ (resp. $\{J_x: x \in \Xi\}$). 
Since $\Dro(\Xi) = I(\Xi) + d(\Xi)$, it remains to determine    the quantity $d(\Xi)$   defined in \eqref{assumption_stability_pointwise}. Due to the periodic structure induced by the proposed assignment rule, $\mwd{\cX}$,  condition \eqref{assumption_stability_pointwise} holds for every block $\Xi \in [K]^{\ell}$  with  
$$
d(\Xi) = 
\frac{1}{\ell} \sum_{j=1}^{\ell} \omega\left( \Psi(\Xi(j:(j-\kappa))) \right).
$$

 Next, we  show how to obtain the blocks  $\Xi_1$ and $\Xi_2$ that are solutions to  the optimization problems in \eqref{prop_opt_blocks_sel}
 by solving two auxiliary, \textit{deterministic} dynamic programming problems with finite state spaces.

\smallskip
\noindent \textbf{\textit{Selection of acceleration block.}} 
 We start with the first optimization problem in \eqref{prop_opt_blocks_sel} and  consider  the modified assignment rule in \eqref{mod_rule}, for which $\lambda(z_0,\Xi)$ is given by  \eqref{modified_lambda}. 
Under general  Markovian change-point models in \eqref{Markovian_change_point},  
for any sequence of treatments $\{x_t \in [K]: t \in \bN\}$,  the expected time of change is given by $(1-\pi_0)(1+V(\bS_0; \{x_t\}))$, where
\begin{align*}
    V(\bS_0; \{x_t\}) \equiv \sum_{t =1}^{\infty} \prod_{s = 1}^{t} \left(1 - \Psi(x_s, \bS_{s-1})\right),
\end{align*}
and  $\bS_t = \Phi\left(x_t, \bS_{t-1} \right)$ for $t \in \bN$; for the finite memory model, the transition function $\Phi$ takes the special form  in \eqref{geometric_change_point}. Then,  minimizing  $\lambda(z_0, \Xi)$ over the initial segment $z_0$ and block $\Xi$ as  in \eqref{prop_opt_blocks_sel}
is equivalent to finding a sequence of treatments $\{x_t\}$ that minimizes $V(\bS_0; \{x_t\})$ subject to the \textit{``periodicity'' constraint} that there exist $\ell, u \in \bN$ with $\ell < u$ and $x_{\ell+s} = x_{u+s} \text{ for each } s \geq 0$.

To solve this constrained problem, 
our strategy is first to \textit{drop the constraint} and find the minimizing sequence $\{x_t\}$ of $V(\bS_0; \{x_t\})$  without requiring it to be periodic after an initial period, and then to show that the solution to this unconstrained problem, in fact, satisfies this constraint.  Specifically, we define the optimal cost \textit{without} the constraint 
as follows:
\begin{equation*}
V^*(\bS_0) \equiv \inf_{(x_1,x_2,\ldots) \in [K]^{\infty}} 
  V(\bS_0; \{x_t: t \in \bN\}).
\end{equation*}
As shown in Appendix 
\ref{app:proof_acc_block_fmm}, by 
 abstract dynamic programming theory \citep{bertsekas2022abstract},   the minimizing sequence of treatments  that attains $V^*(\bS_0)$ is given by 
$$
x^*_t \equiv \upsilon^*(\bS_{t-1}^*)  \qquad \text{and} \qquad \bS_t^* = \Phi\left(x^*_t, \bS_{t-1}^* \right), \quad  t \in \bN
$$
where  $\bS_0^{*} = \bS_0$
and   the  policy $\upsilon^*: [K]^\kappa \to [K]$  can be computed numerically. 
Since the state space $[K]^\kappa$ is finite, after some burn-in period, the sequence is periodic.  That is, if  
$$u^* \equiv \inf\{t \geq 1: \bS^*_{t} = \bS^*_\ell \text{ for some }  0 \leq \ell <  t\},$$  we have $u^* \leq K^{\kappa}$. Finally,   denote by  
$\ell^*$  the  integer  in $[0, u^*)$   such that $\bS^*_{\ell^*} = \bS^*_{u^* }$; then,   
  the solution to the first problem in \eqref{prop_opt_blocks_sel} (more precisely,  its modified version) is given by
\begin{align}\label{geometric_acc_sol}
z_0^* \equiv (x^*_1,\ldots, x^*_{\ell^*}) \qquad \text{and} \qquad
\Xi_1^* \equiv \left(x^*_{\ell^*+1}\,\ldots, x^*_{u^*} \right).
\end{align}

\smallskip
\noindent \textbf{\textit{Selection of detection block.}}  
We next discuss the second optimization problem in \eqref{prop_opt_blocks_sel}. 
As before, the strategy is first to find the solution to the following unconstrained problem
\begin{align*}
\sup_{(x_1,x_2,\ldots) \in [K]^{\infty}} 
\liminf_{M \to \infty}
\frac{1}{M}
\sum_{t =1}^{M}\left(
I_{x_t} + \omega(\Psi(x_t, \bS_{t-1}))
\right), \;\;
\text{ with }\; \bS_{t} = \Phi(x_t, \bS_{t-1}) \text{ for } t \geq 1,
\end{align*} 
and then to show that the solution is periodic after an initial period. 

The unconstrained minimization problem is an instance of average-cost dynamic programming problems \cite{bertsekas1995dynamic}. In Appendix \ref{app:proof_det_block_fmm}, we show  that the solution is given by {
$$x'_t \equiv \upsilon'(\bS'_{t-1}) \quad 
\text{ and } \quad \bS'_t = \Phi\left(x'_t, \bS'_{t-1} \right), 
 \quad   t\in \bN$$}
 where
$  \bS'_0 = \bS_0$ and  the policy 
 $\upsilon':[K]^{\kappa} \to [K]$ can be obtained numerically.  Since the state space $[K]^{\kappa}$ is finite, we have that 
$$u' \equiv \inf\{t \geq 1: \bS'_{t} = \bS'_\ell \text{ for some }  0 \leq \ell <  t\} 
$$ is at most $K^{\kappa}$. Further, let   $\ell'$ be  the   integer such that  $\bS'_{\ell'} = \bS'_{u'}$ and $0 \leq \ell' < u'$. Then, {a}
 solution to  the second optimization problem in \eqref{prop_opt_blocks_sel} is given by
\begin{align} \label{xi2_finite_choice}
\Xi_2^* \equiv (x'_{\ell'+1},\ldots,x'_{u'}).
\end{align}

\begin{remark}
In Appendix \ref{app:proof_det_block_fmm}, we show that
the optimal value of the unconstrained problem does not depend on the initial state, $\bS_0$. Further, since the objective is {to minimize the} average cost, we can select $\Xi_2$ to be any recurring pattern appearing in $\{x_t'\}$, i.e., $(x'_{\ell'+1+s},\ldots,x'_{u'+s})$ for any $s \geq 0$. 
\end{remark}

To sum up, for the finite memory change-point model in \eqref{geometric_change_point}, we suggest the modified procedure \eqref{mod_rule} with the initial treatments $z_0$, acceleration block $\Xi_1$ and detection block $\Xi_2$ selected according to \eqref{geometric_acc_sol} and \eqref{xi2_finite_choice}. Once $\Xi_1$ and $\Xi_2$ are chosen, we set  $b_2$ according to \eqref{eq:bK_sel} and $b_1,d$ to \eqref{eq:b1_d_sel}, as discussed at the beginning of this subsection. Note that the computation of $\Xi_1$ and $\Xi_2$ as outlined above requires solving dynamic programming problems, which however are significantly simpler than the one in Section \ref{subsec:auxilliary} since they are deterministic and have finite state spaces.

\section{Asymptotic optimality}\label{sec:asymptotic_framework}
The results so far are non-asymptotic. In this section,  we introduce an asymptotic regime (Subsection \ref{subsec:alpha}) in which both the expected time of change and the detection delay may diverge at possibly different rates as the tolerance level $\alpha \to 0$. Under this framework,   we obtain an asymptotic approximation to the upper bound in Theorem \ref{thm:upper_bound_T}  (Subsection \ref{subsec:asym_upper})  and establish a \textit{universal}  asymptotic lower bound on the expected total number of observations (Subsection \ref{subsec:asy_lower}). Based on these results, we establish the asymptotic optimality of the proposed procedure under various change-point models, including the finite memory model in \eqref{geometric_change_point} (Subsection \ref{subsec:asy_opt}).




\subsection{Asymptotic framework}\label{subsec:alpha}
Recall the decomposition  of  $\Exp[T]$ in  \eqref{ess_decompose} for  some procedure  $(\cX,T) \in \cC_\alpha$.  Due to the false alarm constraint, as $\alpha \to 0$, the third term becomes negligible, whereas the second, corresponding to the average detection delay,  goes to infinity. For the asymptotic analysis to be relevant, the first term, which represents the expected time of change, should also be allowed to diverge, maybe at a rate even faster than the second term.  Thus, in this section, we parametrize the functions $\{\pi_{t}\}$ by $\alpha$  and allow them to vanish as $\alpha \to 0$. When we want to emphasize this parametrization, we write $\pi_{t}(\,\cdot\, ; \alpha)$ instead of  $\pi_{t}(\,\cdot\,)$.   As a result, all quantities related to the change-point model, such as 
 $\lambda(\Xi)$ and $\mwd{\lambda}(\Xi)$ in \eqref{block_expected_change} and \eqref{block_upper},  $d(\Xi)$ and  $M(\Xi,\epsilon)$ in \eqref{assumption_stability_pointwise} and  \eqref{assumption_stability},  $\Dro(\Xi)$ in \eqref{adjust_info_num}, $\zeta(\Xi)$ in \eqref{def:zeta}, 
  as well as the blocks $\Xi_1, \Xi_2$ of the proposed procedure and their lengths,  $\ell_1, \ell_2$, are allowed  to vary with $\alpha$. For simplicity, we assume that the densities $\{f_x,g_x: x \in [K]\}$ do not depend on $\alpha$, although such an extension is straightforward. 

For quantities $x$ and $y$ that are parameterized by $\alpha$,  we  write   $x = o(y)$ if $\lim (x/y) = 0$,  $x = O(y)$ if $\limsup (x/y) < \infty$,\; 
$x \geq y (1+o(1))$ if $\liminf (x/y) \geq 1$, 
$x \leq y (1+o(1))$ if $\limsup (x/y) \leq 1$,  and $x \sim y$ if $\lim (x/y) = 1$,  where all limits are taken as $\alpha \to 0$.

\subsection{Asymptotic upper bound} \label{subsec:asym_upper}
We first explore the asymptotic {behavior} of the upper bound in  Theorem \ref{thm:upper_bound_T} on the expected {total number of observations} 
of the proposed procedure, with the thresholds $b_2$ and $b_1, d$ selected according to \eqref{eq:bK_sel} and \eqref{eq:b1_d_sel}, respectively.  We observe that, as $\alpha \to 0$,  $b_1,b_2,d$ diverge and $\eta(b_1,d)$ vanishes.  The term $\mathcal{R}$  does not depend on the thresholds, and it {will} be negligible compared to ${\cU}(b_1,b_2,d)$ if     the blocks $\Xi_1$ and $\Xi_2$ used in the proposed procedure satisfy the following conditions
as $\alpha \to 0$:
\begin{align}\label{assumption:asymp_cond_cp1}
\begin{split}
&  \max_{i = 1,2}\; d(\Xi_i) = O(1), \qquad \max_{i = 1,2}\; M(\Xi_i, \epsilon) = o\left(|\log(\alpha)| \right) \text{ for any } \epsilon \in (0,1), \\
&  \max_{i = 1,2}\; \ell_i = O(|\log(\alpha)|),\quad
|\log(\zeta(\Xi_1))|  +  \log\left( \mwd{\lambda}(\Xi_1) \right) =  o\left( \lambda(\Xi_1) + |\log(\alpha)| \right),
\end{split}
\end{align}
where  $\lambda(\Xi_1)$ is replaced by $\lambda(z_0, \Xi_1)$, defined in \eqref{modified_lambda}, 
 for the modified assignment rule in  \eqref{mod_rule}.  Under this technical condition, ${\cU}(b_1,b_2,d)$ is the dominant term in the upper bound in  Theorem \ref{thm:upper_bound_T}, which can be further simplified as shown next. The proofs for {the}  results in this subsection can be found in Appendix \ref{app:aymp_upper_bound}.

\begin{corollary}\label{lemma:aymp_upper_bound}
Assume that the response model satisfies condition
\eqref{assumptions:response_KL}  and that the blocks $\Xi_1, \Xi_2$ of the proposed procedure in Section \ref{sec:mastery}  satisfy \eqref{assumption:asymp_cond_cp1}. If the
 thresholds are selected according to \eqref{eq:bK_sel} and \eqref{eq:b1_d_sel}, then
\begin{align*}
\Exp [\mwd{T} ] \leq \left(
\lambda(\Xi_1) + \frac{|\log(\alpha)|}{\Dro(\Xi_2)}
\right)(1+o(1)).
\end{align*}

Further, if the treatment assignment rule is modified as in \eqref{mod_rule}, the conclusion continues to hold with $\lambda(\Xi_1)$  replaced by $\lambda(z_0, \Xi_1)$ defined in \eqref{modified_lambda}.
\end{corollary}


Some remarks are in order about the conditions in  \eqref{assumption:asymp_cond_cp1}. The first condition essentially requires that the change cannot be forced by applying the blocks of the proposed procedures. It holds for any block if the transition probabilities are bounded away from $1$, i.e.,   there exists a constant $c \in (0,1)$ such that 
\begin{equation}
\label{general_bounded_away_from_one}
\sup_{t \geq 0, z \in [K]^t} \pi_t(z; \alpha) \leq 1-c,\;   \text{ for any small enough } \alpha.
\end{equation} 
The other conditions impose requirements on the rates at which various quantities are allowed to diverge, and their verification depends on the change-point model. For the finite memory model in \eqref{geometric_change_point}, if the transition cannot be forced in the sense of \eqref{general_bounded_away_from_one}, they hold as shown below.


\begin{lemma}\label{lemma:upper_finite_meet_asym_cond}
Consider the finite memory model in \eqref{geometric_change_point} with a fixed $\kappa \geq 0$  and the modified assignment rule in \eqref{mod_rule}. Assume  that \eqref{general_bounded_away_from_one} holds and $\min\{\ell_1,\ell_2\} \geq \kappa$. If the lengths of $z_0, \Xi_1$ and $\Xi_2$ are bounded as $\alpha \to 0$, i.e., $\max\{\ell_1, \ell_2, |z_0|\}= O(1)$, then condition \eqref{assumption:asymp_cond_cp1} holds.

In particular, if  $z_0, \Xi_1$ and $\Xi_2$ are selected according to \eqref{geometric_acc_sol} and \eqref{xi2_finite_choice} as suggested in Subsection \ref{subsec:example_finite_memory}, then the lengths of $z_0, \Xi_1$ and $\Xi_2$ are bounded by $K^{\kappa}$.
\end{lemma}

\begin{remark}
The condition  $\min\{\ell_1,\ell_2\}  \geq \kappa$ in the above lemma can be made  without loss of generality, since for $\Xi \in \{\Xi_1,\Xi_2\}$, we can always define $\tilde{\Xi} \equiv (\Xi,\ldots,\Xi)$ 
by repeating the block $\Xi$ so that the length of $\tilde{\Xi}$ is at least $\kappa$. Note that $\tilde{\Xi}$ and $\Xi$ induce the same assignment rule, and since $\kappa$ is fixed, if Corollary \ref{lemma:aymp_upper_bound} holds for block $\tilde{\Xi}$, it also holds for $\Xi$.
\end{remark}

\subsection{Universal lower bound} \label{subsec:asy_lower}
Next, we establish an asymptotic lower bound on the expected total number of observations for all procedures with the desired error control. {For this,}
we recall the definition of  $\mwd{\lambda}(\Xi)$ in \eqref{block_upper} and
 assume  that there exists a block $\Xi$ such that
\begin{align}\label{existence of a block}
\alpha\, \mwd{\lambda}(\Xi) \;=\; o(\; \lambda^*  + |\log(\alpha)| \;), \quad \text{where} \quad
\lambda^* \equiv \inf_{\cX} \Exp\left[ \Theta_{\cX} \right].
\end{align}
Note that $\lambda^*$ is the smallest possible expected time of change. Further, {we assume that there is}  a positive quantity, $\Dro^*$,   which  depends on $\alpha$  such that $\Dro^* = O(1)$,    {so that} for every   assignment rule   $\cX$  
 and   $\epsilon > 0$ {we have}
\begin{align}
\label{detection_delay_optimal}
\limsup_{\alpha \to 0}\;\;
\sup_{t \geq 0, \cX} \; \; 
\frac{1}{ \Dro^* N_{\epsilon,\alpha}}\sum_{s=1}^{N_{\epsilon,\alpha}}\left(
I_{X_{t+s}} + \omega(\Pi_{t+s}(\cX))
\right) \leq  1 + \epsilon/2,
\end{align}
where $ N_{\epsilon,\alpha} \equiv \lfloor (1-\epsilon) \, |\log(\alpha)|/ \, \Dro^* \rfloor$ and   $\lfloor z \rfloor$  is the largest integer that does not exceed $z$. 

\begin{theorem}\label{thm:asymptotic_lower_bound}
Suppose that the response model satisfies condition 
\eqref{assumptions:response_KL} and that  the change-point model satisfies conditions \eqref{general_bounded_away_from_one} and  \eqref{existence of a block}. If   condition \eqref{detection_delay_optimal} also holds, then 
\begin{equation*}  
  \inf_{(\cX, T) \in \cC_{\alpha}} \Exp[T]
\geq  \left( \lambda^* + \frac{|\log(\alpha)|}{\Dro^*}  \right) (1+ o(1)).
\end{equation*}
\end{theorem}
\begin{proof}
    See Appendix \ref{app:lower_bd}.
\end{proof}

We discuss conditions \eqref{existence of a block} and   \eqref{detection_delay_optimal}. Fix an arbitrary procedure  $(\cX,T) \in \cC_\alpha$. To lower bound $\Exp[T]$,  we lower bound the first two terms in the decomposition  \eqref{ess_decompose} and upper bound the third, i.e.,  $\Exp[(\Theta_{\cX} - T)^{+}]$, which is, on the event of a false alarm,  the expected length of the interval between the time of the false alarm and the actual time of the change. 
The latter task is not possible for {an arbitrary}
assignment rule,  since one could choose any inferior treatment assignment rule {\textit{after}} the stopping time $T$.  However, we can modify $\cX$ after the time of stopping $T$ by assigning repeatedly the block 
$\Xi$ in \eqref{existence of a block} (see Appendix \ref{app:lower_bd}). If we denote the modified assignment rule by ${\cX}'$, then $({\cX}', T)$ has the same stopping rule and false alarm rate as $(\cX, T)$, since the modification is after $T$. Thus, in the analysis, {without loss of generality we can} focus on this modified rule instead, for which $\Exp[(\Theta_{{\cX}'} - T)^{+}]$ can be controlled.

Further, recall the recursive formula for the posterior odds in Lemma \ref{lemma:recursive}. The quantity $\Dro^*$ in \eqref{detection_delay_optimal} can be interpreted as the fastest speed, under any assignment rule, at which the logarithm of the posterior odds can grow.


For the finite memory change-point model in  \eqref{geometric_change_point},  $\lambda^*$ and $\Dro^*$ are closely related to the optimal design in Subsection \ref{subsec:example_finite_memory} for the proposed procedure, as shown in the following lemma.  Recall that $(z_0^*, \Xi_1^*)$ and $\Xi_2^*$ are defined in \eqref{geometric_acc_sol} and \eqref{xi2_finite_choice}, which  depend on $\alpha$.

 \begin{lemma}\label{lemma:finite_lb_condition} 
Consider the finite memory change-point model \eqref{geometric_change_point} with $\kappa \geq 0$ fixed, and 
assume condition \eqref{general_bounded_away_from_one} holds. Then,
condition \eqref{existence of a block} holds with $\Xi = \Xi_1^*$ and 
$\lambda^* = \lambda(z_0^*, \Xi_1^*)$, and condition \eqref{detection_delay_optimal}  holds with $\Dro^* = \Dro(\Xi_2^*)$.  
\end{lemma}
\begin{proof}
    See Appendix \ref{proof:finite_lb_condition}.
\end{proof}

\subsection{Asymptotic optimality} \label{subsec:asy_opt}

To establish the asymptotic optimality of the proposed procedure in Section \ref{sec:mastery}, we compare the asymptotic upper bound in Corollary \ref{lemma:aymp_upper_bound} with the universal asymptotic lower bound in Theorem \ref{thm:asymptotic_lower_bound}.

For the proposed procedure $(\widetilde{\cX}, \widetilde{T})$,  we denote by $\Xi_1, \Xi_2$ the acceleration and detection blocks,  respectively, which depend on $\alpha$. Given the two blocks $\Xi_1,\Xi_2$, we select the thresholds according to  \eqref{eq:bK_sel} and \eqref{eq:b1_d_sel}.

\begin{corollary}\label{cor:main}
Assume that the response model satisfies condition \eqref{assumptions:response_KL},
and that conditions
\eqref{assumption:asymp_cond_cp1},    \eqref{general_bounded_away_from_one},  \eqref{existence of a block} and \eqref{detection_delay_optimal} hold.
Assume further 
\begin{equation}\label{suff_for_asym_opt}
(i) \;\; \lambda(\Xi_1) \; \sim \;  \lambda^*  \quad \qquad \text{and} \quad \qquad 
(ii)  \;\;
 \Dro(\Xi_2) \sim \Dro^*.
\end{equation}
Then,   
$(\widetilde{\cX}, \widetilde{T}) \in \cC_{\alpha}$
for every $\alpha \in (0,1)$,   and 
$$\Exp[\mwd{T}] \; \sim \;   \inf_{(\cX, T) \in \cC_{\alpha}} \Exp[T]  \; \sim \;   \lambda^* + \frac{|\log(\alpha)|}{\Dro^*}.
$$ 
If the modification is made as in \eqref{mod_rule}, the conclusion continues to hold with $\lambda(\Xi_1)$  replaced by $\lambda(z_0, \Xi_1)$ defined in \eqref{modified_lambda}.
\end{corollary}

We have discussed before all conditions in Corollary  \ref{cor:main} except for \eqref{suff_for_asym_opt}, which requires that blocks $\Xi_1$ and $\Xi_2$ are selected optimally for the acceleration and the detection task, respectively.  Specifically, condition (i) in \eqref{suff_for_asym_opt} requires that the expected time of change, if block $\Xi_1$ was assigned repeatedly,  should be of the same order as the smallest possible over all sequences of treatments, while condition (ii) in \eqref{suff_for_asym_opt} requires that the detection speed under block $\Xi_2$ should be of the same order as the largest possible. 

For the finite memory model in  \eqref{geometric_change_point}, if we choose $(z_0^*, \Xi_1^*)$ and $\Xi_2^*$ as in \eqref{geometric_acc_sol} and \eqref{xi2_finite_choice},  then by Lemma \ref{lemma:finite_lb_condition}, 
condition \eqref{suff_for_asym_opt} holds. Together with Lemma \ref{lemma:upper_finite_meet_asym_cond} and \ref{lemma:finite_lb_condition}, we have that the proposed procedure is asymptotically optimal under this model, as summarized below. Denote by $(\widetilde{\cX}^*, \widetilde{T}^*)$ the proposed procedure with $(z_0^*, \Xi_1^*)$  in \eqref{geometric_acc_sol},  $\Xi_2^*$ in \eqref{xi2_finite_choice}, and the  
thresholds in \eqref{eq:bK_sel} and \eqref{eq:b1_d_sel}.

\begin{corollary}\label{cor:geo_opt} 
Consider the finite memory change-point model \eqref{geometric_change_point}  with $\kappa \geq 0$ fixed.
Assume that the response model satisfies condition \eqref{assumptions:response_KL} and 
that the change cannot be forced in the sense of 
\eqref{general_bounded_away_from_one}.  Then $(\widetilde{\cX}^*, \widetilde{T}^*) \in \cC_{\alpha}$  for every $\alpha \in (0,1)$, and 
$\Exp[\mwd{T}^*] \; \sim \;   \inf_{(\cX, T) \in \cC_{\alpha}} \Exp[T]
$.
\end{corollary}

In Appendices \ref{app:ex_exp} and \ref{app:ex_poly}, we establish the asymptotic optimality of the proposed procedure under proper selections of blocks $\Xi_1,\Xi_2$ and thresholds $b_1,b_2,d$, for a Markovian change-point model with long memory and a non-Markovian model, respectively.

\section{Models with unknown parameters} \label{sec:distr_uncertainty}
So far, we assume perfect knowledge of the sequence of functions $\{\pi_t, t \geq 0\}$, as well as the pre-and post-change response densities $\{f_x, g_x: x \in [K]\}$. In practice, however, they are estimated from historical data, and attached with distributional uncertainty, either in the form of asymptotic limiting distribution under the frequentist framework or posterior distribution under the Bayesian framework.

In this section, we assume that these quantities depend on some parameter $\eta$ in some Polish space $\cH$,  and write instead $\{\pi_{t,\eta}, t \geq 0\}$,   $\{f_{x,\eta}, g_{x,\eta}: x \in [K]\}$. Further, we assume there is a given prior distribution $\tilde{\mu}$ for $\eta$, which represents distributional uncertainty.  
For each $\eta \in \cH$ and $t \in \bN$, define $\Gamma_{0,\eta} = {\pi_{0,\eta}}/{(1-\pi_{0,\eta})}$ and
\begin{align*}
\Gamma_{t,\eta} = (\Gamma_{t-1,\eta} + \Pi_{t,\eta} ) \frac{\Lambda_{t,\eta}}{1-\Pi_{t,\eta}},  \quad
\Pi_{t,\eta} = \pi_{t,\eta}(X_1,\ldots,X_t), \quad
\Lambda_{t,\eta} = \frac{g_{X_t,\eta}(Y_t)}{f_{X_t,\eta}(Y_t)}.
\end{align*}
That is, if  the parameter $\eta$ was known, then $\Pi_{t,\eta}$, $\Lambda_{t,\eta}$, and $\Gamma_{t,\eta}$ are, respectively, the transition probability, the likelihood ratio of the response, 
and the posterior odds at time $t$ (see Section \ref{sec:ProbFormulation}); they depend on the assignment rule $\cX$, which  is omitted for simplicity. Further, define  
$$
D_{0,\eta} \equiv 1 - \pi_{0,\eta}, \quad \text{ and }\quad D_{t,\eta} \equiv D_{0,\eta} \prod_{s=1}^{t} (1-\Pi_{s,\eta}) f_{X_s,\eta}(Y_s), \;\;t \geq 1.
$$

The next lemma provides a formula for the posterior odds that the change has already occurred at time $t \geq 0$, i.e., for $\Gamma_t$ in \eqref{posterior_odds_prob}, and also shows that Lemma \ref{lemma:err_control} continues to hold. In particular, we can control the false alarm rate below $\alpha$, if, at the time of stopping, the posterior odds process is above $(1-\alpha)/\alpha$.  The proof is in Appendix \ref{proof:post_odds_para}.

\begin{lemma}\label{distr:lemma_postodds}
Let $\cX$ be an assignment rule, and $S$ a finite, $\{\cF_t\}$-stopping time.  For each $t \geq 0$, the posterior odds that $L_t=1$ given $\cF_t$ is given by the following:
\begin{align*}
    \Gamma_t = \left(\int_{\cH} \Gamma_{t,\eta} D_{t,\eta} \tilde{\mu}(d\eta)\right)/\left(\int_{\cH} D_{t,\eta} \tilde{\mu}(d\eta)\right).
\end{align*}
Further, if
$\Pro(\Gamma_{S} \geq b) = 1$ for some $b > 0$, then
$\Pro(S < \Theta_\cX) \leq  {1}/{(1 + b)}$.
\end{lemma}

In practice, it may be computationally expensive to evaluate the integrals in the above lemma. This motivates the following Monte-Carlo approximation, $\tilde{\Gamma}_t$, for $\Gamma_t$. Specifically, let $\eta_1,\ldots,\eta_M$ be a random sample of size $M$ from the prior distribution $\tilde{\mu}$ that is generated before any treatment assignment, and define for each $t \geq 0$,
\begin{align}\label{distr:poster_odds_approx}
    \tilde{\Gamma}_t = \left(\sum_{i=1}^{M} \Gamma_{t,\eta_i} D_{t,\eta_i} \right)/\left(\sum_{i=1}^{M}  D_{t,\eta_i}\right).
\end{align}
Note that for each $i \in [M]$, $\{\Gamma_{t,\eta_i}: t \geq 0\}$ and $\{D_{t,\eta_i}: t \geq 0\}$ admits recursive computation.


\begin{remark}
If $\eta$ is estimated by MCMC from historical data, $\{\eta_1,\ldots,\eta_M\}$ are samples from an ``approximate'' posterior distribution, given the historical data, which serves as the prior for our online acceleration and detection tasks. 
\end{remark}

We propose the following procedure when both the response model and the change-point model have unknown parameters with a given prior distribution. 

\begin{itemize}
    \item[Step 1.] Let $\eta^*$ be a ``typical'' value of the distribution $\tilde{\mu}$, such as the mean or mode.    Choose blocks $\Xi_1, \Xi_2$, and thresholds $b_1,b_2,d$ according to the discussions in Section
    \ref{sec:design} for   functions $\{\pi_{t,\eta^*}: t \geq 0\}$ and densities $\{f_{x,\eta^*}, g_{x,\eta^*}: x \in[K]\}$.
    \item[Step 2.] Modify the ``detection'' and ``testing'' rules in \eqref{def:detection_testing_rules} as follows:
    \begin{align*}\label{def:modi_det_test}
    \begin{split}
                \tilde{\sigma}(t,b) \equiv \inf\{s \geq 0: \tilde{\Gamma}_{t+s} \geq b\},   \quad 
        \tilde{\tau}(t,d) \equiv \inf\left\{
      s \geq 1:\;        
        \sum_{i=1}^{M} \omega_{t,\eta_i} \prod_{j=1}^{s}\Lambda_{t+j,\eta_i} 
        \leq 1/d \right\},
    \end{split}
    \end{align*}
    where $\omega_{t,\eta_i} =  D_{t,\eta_i}(1+\Gamma_{t,\eta_i})/(\sum_{j =1}^{M} D_{t,\eta_{j}}(1+\Gamma_{t,\eta_{j}}))
    $ is the weight for $\eta_i$ at time $t$; see discussions below.
    \item[Step 3.] Given the blocks $\Xi_1$ and $\Xi_2$, and thresholds $b_1,b_2,d$ in Step 1, we define the procedure as in Section \ref{subsec:proc_def} with $\sigma(\cdot)$ and $\tau(\cdot)$ in \eqref{def:detection_testing_rules} replaced by $\tilde{\sigma}(\cdot)$ and $\tilde{\tau}(\cdot)$ above. 
\end{itemize}

A few remarks are in order. First, by Lemma \ref{distr:lemma_postodds}, if we use the ``detection'' rule ${\sigma}(\cdot)$ in \eqref{def:detection_testing_rules},  the false alarm rate is controlled below $\alpha$, thanks to the choice of $b_2$ in \eqref{eq:bK_sel}. We replace the true posterior odds $\Gamma_t$ by its approximation $\tilde{\Gamma}_t$ due to computational considerations, {where}  the approximation error {can} be reduced by increasing the sample size,  $M$, {from the prior distribution}.
Second, by Lemma \ref{app:lemma_post_eta} in Appendix \ref{proof:post_odds_para},
the posterior distribution of $\eta$ given $\cF_t$ has a $\tilde{\mu}$-density that is proportional to  $D_{t,\eta}(1+\Gamma_{t,\eta})$; thus, using the weights  $\{\omega_{t,\eta_i}\}$    in Step 2 may be viewed again as a Monte-Carlo approximation. Finally, we note that if the prior $\tilde\mu$ is supported at a single point, i.e., it has no uncertainty, this procedure reduces to the one in Section \ref{sec:mastery}.

\section{Simulation studies} \label{sec:simulation}

In this section, we present simulation studies under the finite memory model in  \eqref{geometric_change_point} and with a binary response space, i.e., $\bY=\{0,1\}$. We assume that
the change cannot happen before collecting observations, i.e., 
$\pi_0 = 0$, and that for each $x \in [K]$   the corresponding  guessing and  slipping probabilities are equal, 
 that is,
\begin{align*} 
\Pro(Y_t = 0 \vert X_t = x, L_t = 1) = 
\Pro(Y_t = 1 \vert X_t = x, L_t = 0) = f_x \in (0,1), \; \;\text{ for all } t \in \bN.
\end{align*}

In Subsection \ref{subsec:simulation_geo} and \ref{sec:exp_momeryless}, we assume that both the response model 
and the   change-point model 
are completely specified, and we compare the procedure proposed in Section \ref{sec:mastery}  against the optimal in Section \ref{sec:dp}  in terms of the expected sample size ({\sf ESS}) and the actual error probabilities ({\sf Err})  for different target levels of $\alpha$. For the optimal procedure, we simulate the false alarm probability of $( \cX_c^*, T_c^*)$ in \eqref{dp_optimal}   for all values of $c$ in $\{1.2^{-a}: a = 10,11,\ldots,100\}$.  Then,   for any given $\alpha$ of interest,   we select $c(\alpha)$ to be the largest number in the above set for which the corresponding error probability does not exceed $\alpha$. This task requires extensive simulations as discussed in Section \ref{sec:dp}, even in the case of the binary response space.  For the
proposed procedure $(\mwd{\cX}, \mwd{T})$, 
as discussed  in Subsection \ref{subsec:example_finite_memory}, 
we consider the modification in \eqref{mod_rule}  and select  $z_0,\Xi_1$ and $\Xi_2$
 according to \eqref{geometric_acc_sol} and \eqref{xi2_finite_choice}  and  $b_1,b_2,d$ according to  \eqref{eq:bK_sel} and \eqref{eq:b1_d_sel}. By  Corollary  \ref{cor:geo_opt}, $(\mwd{\cX}, \mwd{T})$ is asymptotically optimal.


In Subsection \ref{subsec:simu_uncertainty}, we assume that the response and the change-point model have unknown parameters with a given joint prior {distribution},  and we compare the proposed procedure in Section \ref{sec:distr_uncertainty} against an \textit{oracle} who knows the exact {values} of the parameters.


\subsection{Finite memory models}  \label{subsec:simulation_geo}
{In the first simulation study}, 
we consider two cases for the {change-point} model in \eqref{geometric_change_point}: $K = 4, \kappa = 2$ and $K =6, \kappa = 3$.  
For  each $(x,\bS) \in [K] \times [K]^{\kappa}$ we randomly generate a number in $(0,0.08)$ for $\Psi(x, \bS)$, and these numbers are listed in  Appendix \ref{app:geo_values}.  The initial state $\bS_0$ in \eqref{Markovian_change_point}  is  $(1,\ldots,1)$. Moreover, we consider two treatments to be the most informative, so that no single treatment dominates all others in terms of detection. Specifically, we set  $f_1 > \ldots > f_{K-1} = f_K$, with values in  $(0.3,0.5)$ that  are listed  in  Appendix \ref{app:geo_values}.

The optimal procedure is implemented as described in Section \ref{sec:dp} with state space $\bR \times [K]^\kappa$.  For the proposed procedure $(\mwd{\cX}, \mwd{T})$, as discussed in Subsection \ref{subsec:example_finite_memory}, the block $\Xi_1$ is induced by a policy $\upsilon^*$, which is illustrated in the left two panels of Figure \ref{fig:geo_K4} for the two cases. Specifically, consider the upper left panel ($K=4,\kappa=2$). 
Each rectangle represents a state $(i,j) \in [4]^2$, and above its outgoing arrow is the treatment selected by $\upsilon^*$ given the current state, which leads to the next state. Since the initial state is $(1,1)$, we have $t_0 = 1, z_0 = (3)$ and $\Xi_1 = (2,1,3)$; that is, in the first acceleration stage,  treatment $3$ is used at time $1$, after which we assign treatments according to $\Xi_1$. Similarly, from the lower left panel (only a partial solution is drawn), we chose $\Xi_1 = [5,3,6,1,4]$ for the second case. The block $\Xi_2$ is induced by the policy $\upsilon'$ in  Subsection \ref{subsec:example_finite_memory}, which is illustrated in the right two panels of Figure \ref{fig:geo_K4}. As discussed there, we can use any cycle induced by $\upsilon'$. In this study,  we use  $\Xi_2 = [3,4,4,3]$ for the first case, and $[5,5, 5, 6 ,6,4]$ for the second.


 We report  in Table \ref{tab:bern} the {\sf ESS} and  {\sf Err}  
of each procedure  for different target values of $\alpha$, whereas in Figure \ref{fig:line}  we  plot {\sf ESS} against $-\log_{10}  ({\sf Err})$.  From  Table \ref{tab:bern}, for any given $\alpha$,   the {\sf ESS}  of the proposed procedure is at most 3 to 5  observations larger than the optimal. If the proposed procedure is designed so that the false alarm constraint is satisfied with (approximate) equality,  as the optimal approach does, the gap becomes even smaller; see Figure \ref{fig:line}. 



\begin{figure}[tbp!]
\subfloat[$K =4 ,\kappa =2$]{\label{fig:K4_acc}
\includegraphics[width=0.45\linewidth]{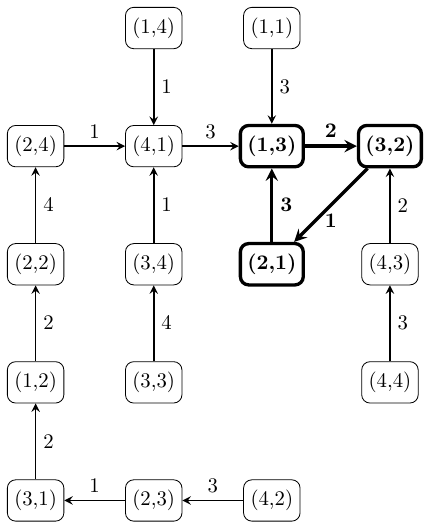}  
}
\hspace{0.15cm}
\subfloat[$K =4 ,\kappa =2$]{\label{fig:K4_det}
\includegraphics[width=0.45\linewidth]{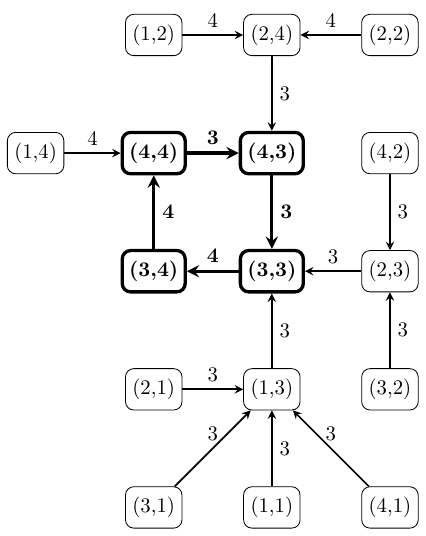}  
}\\
\subfloat[$K =6 ,\kappa =3$]{\label{fig:K6_acc}
\includegraphics[width=0.4\linewidth]{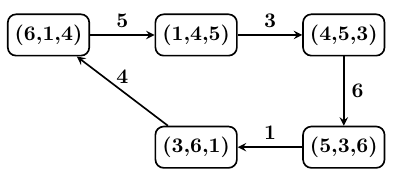}  
}
\hspace{0.3cm}
\subfloat[$K =6 ,\kappa =3$]{\label{fig:K6_det}
\includegraphics[width=0.4\linewidth]{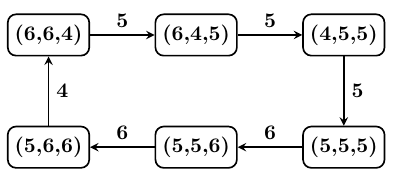}  
}

\caption{The left two panels show  solutions to Equation \eqref{geometric_accleration_opt} in Appendix \ref{app:proof_acc_block_fmm}, while the right two to Equation \eqref{geometric_det_opt} in Appendix \ref{app:proof_det_block_fmm}. Due to space constraints, only a subset of states are shown for the case $K=6,\kappa=3$.
}
\label{fig:geo_K4}
\end{figure}

\begin{table}[tbp!]
\centering
\caption{Given target level $\alpha$, we first determine the thresholds for each procedure, and then simulate the actual error probability (Err), and the expected sample size (ESS)}
\label{tab:bern}
\begin{tabular}{ccccccccc}
\hline
$\alpha$   & \multicolumn{2}{c}{0.05} & \multicolumn{2}{c}{1E-2} & \multicolumn{2}{c}{1E-3} & \multicolumn{2}{c}{1E-4} \\ \hline
 $K =4, \kappa = 2$       & Err          & ESS        & Err           & ESS       & Err           & ESS       & Err        & ESS       \\ \hline
Optimal &0.041		&29.2		& 8.6E-3		&37.6		&9.4E-4		&48.6		&8.8E-5		&60.2		 \\ 
Proposed   &0.038		&33.0		&7.8E-3     &41.9       &7.7E-4     &53.9       &7.7E-5     &65.7      \\~\\ 
\hline
 $K =6, \kappa = 3$       & Err          & ESS        & Err           & ESS       & Err           & ESS       & Err        & ESS       \\ \hline
Optimal &0.047		& 23.6		&9.3E-3		&30.3		&8.4E-4		&39.3		&9.4E-5		&47.4		 \\ 
Proposed   &0.038		& 25.9		&7.3E-3     &33.1      &7.3E-4     &41.9      &7.3E-5     &50.6     \\ \hline
\end{tabular}
\end{table}

\begin{figure}[tbp!]
\subfloat[$K=4,\kappa=2$]{\label{fig:line_K4}
\includegraphics[width=0.4\linewidth]{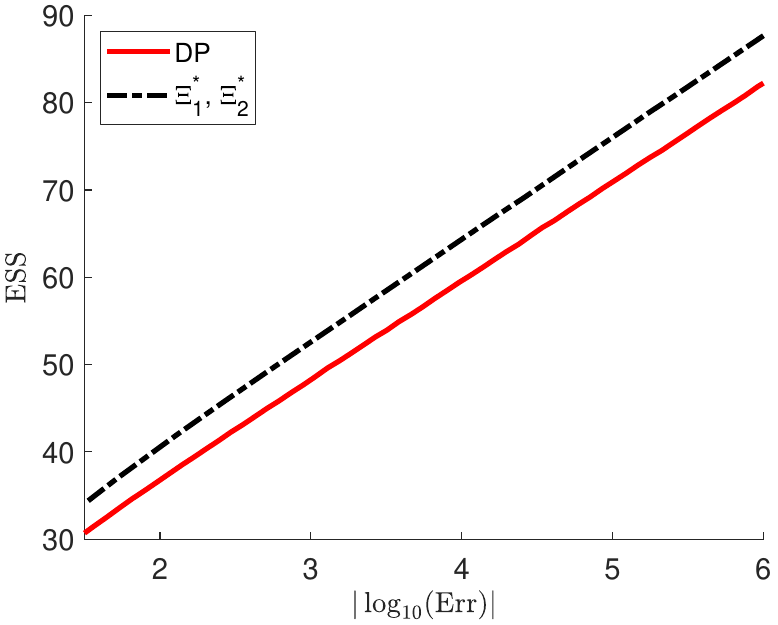}  
}
\hspace{0.1cm}
\subfloat[$K=6,\kappa=3$]{\label{fig:line_K6}
\includegraphics[width=0.4\linewidth]{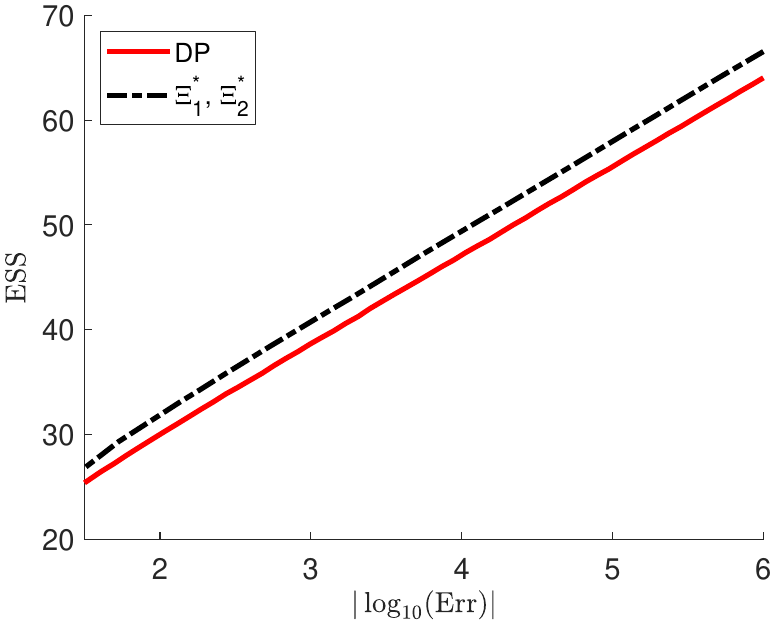}  
}
\caption{For both cases, we vary the thresholds of each procedure and plot $|\log_{10}(\text{Err})|$ vs ESS. 
}
\label{fig:line}
\end{figure}

\subsection{Parametrization by the target  level}\label{sec:exp_momeryless}
{In the second simulation study},  we consider the memoryless model, i.e., \eqref{geometric_change_point} with $\kappa = 0$, where the transition probability $\Pi_t$ only depends on $X_t$ for {each} $t \geq 1$. 
We recall the asymptotic framework in Section \ref{sec:asymptotic_framework} and  parametrize the transition probabilities by the target level $\alpha$ so that 
the asymptotically optimal expected time of change, i.e., $\lambda^*$, and the optimal detection delay, i.e., $|\log(\alpha)|/\Dro^*$, are equal, and thus both diverge as $\alpha \to 0$.

Specifically, we let $f_1 =0.4$, $f_2 = 0.35$ and $f_3 = 0.3$, and as a result, the Kullback–Leibler divergences are $I_1 = 0.0811$, $I_2 =  0.1857 $, $I_3 = 0.3389$. For each $\alpha \in (0,0.7)$, we let
$$
\Psi(1) = I_3/|\log(\alpha)|,\quad
\Psi(2) = I_3/|2\log(\alpha)|, \quad
\Psi(3) = 0.
$$
For the proposed procedure, the optimal acceleration block is $\Xi_1^* = (1)$, i.e., a block with a single treatment $1$, and the optimal detection block is $\Xi_2^* = (3)$. By Lemma \ref{lemma:finite_lb_condition}, we have
$\lambda^* = \lambda(\Xi_1^*)  = |\log(\alpha)|/I_3$ and $\Dro^* = \Dro(\Xi_2^*) = I_3$ for any $\alpha \leq 0.22$.


In Figure \ref{fig:memoryless}, as the target level $\alpha$ varies, we plot {\sf ESS} against $-\log_{10}  ({\sf Err})$    for the proposed procedure and for the optimal solution. 
Note that for the latter, 
due to the parametrization,
we need to run the dynamic programming for a collection of $c$ values for \textit{each} $\alpha$ and select $c(\alpha)$ by simulation. From Figure \ref{fig:memoryless}, we observe that the ratio of the {\sf ESS} of the proposed procedure to that of the optimal goes to $1$ as $\alpha \to 0$, which corroborates the asymptotic optimality of the proposed procedure in setups where neither the expected time of change nor the detection delay could be ignored asymptotically.

\begin{figure}[tbp!]
\includegraphics[width=0.4\linewidth]{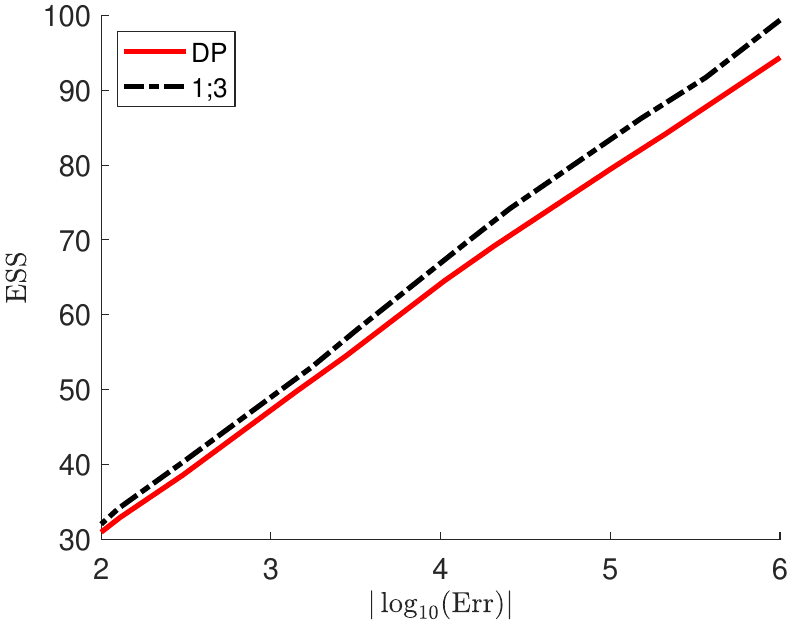}
\caption{Memoryless change-point model.}
\label{fig:memoryless}
\end{figure}

\subsection{Models with  unknown parameters}\label{subsec:simu_uncertainty}
{In the third simulation study},
we consider the memoryless model  with $K=3$  {treatments},  as in the previous subsection, but {now} both the response model and the change-point models have unknown parameters with a given prior distribution. 
Specifically, we  let $\omega_1, \omega_2 > 0$ be user-specified parameters that control the level of uncertainty, and denote by $a \mp b$  the interval $(a-b,a+b)$. We assume that the  slipping/guessing parameters  $f_1,f_2,f_3$  are uniformly distributed on  
$0.42 \mp \omega_1$, $0.39 \mp \omega_1$,  $0.36 \mp \omega_1$, respectively, and that 
the \textit{reciprocals} of $\Psi(1), \Psi(2), \Psi(3)$  are  uniformly distributed on $30 \mp \omega_2$, $40 \mp \omega_2$, $50 \mp \omega_2$, respectively.
Recall that the reciprocal of $\Psi(x)$ indicates the expected time of change under treatment $x \in [K]$.
Under the prior, all parameters are independent. The target level $\alpha$ is set to be $1\%$.

If these parameters {are} fixed to the {values} of their \textit{prior means}, then treatment $3$ has the largest detection power, while treatment $1$ the largest acceleration. Thus, for the procedure proposed in Section \ref{sec:distr_uncertainty}, we select $\Xi_1 = (1)$, $\Xi_2  = (3)$, and the thresholds based on the prior means. Further, we consider three values ($200,500,1000$) for  the number of samples $M$ from the prior distribution that are used to approximate the posteriors by Monte Carlo.

Under this setup, the dynamic programming approach in Section \ref{sec:dp} needs to be extended to include the posteriors for the unknown parameters in the sufficient statistics, which however would be computationally infeasible. Thus, we compare the proposal in Section \ref{sec:distr_uncertainty} with an \textit{oracle},  which knows the values of $f_x, \Psi(x), x \in [3]$. Specifically, in each repetition, we generate these parameters and feed them to the procedure in Section \ref{sec:mastery}.

We present the results in Table \ref{tab:parameter_uncertainty} and note that the proposed procedure in Section \ref{sec:distr_uncertainty} is not sensitive to the choice of $M$ and controls well the false alarm rate. Further, when the uncertainty regarding the parameters is small, the proposed procedure achieves similar {\sf ESS} to the oracle. However, as expected, the performance loss increases with the uncertainty.

\begin{table}[tbp!]
\caption{$M$ is the number of particles, and $\omega_1,\omega_2$ specify the prior distribution. The target level $\alpha = 1\%$.}
\begin{tabular}{ccccccccccc}
\hline
($\omega_1,\omega_2$) & \multicolumn{2}{c}{($0.01,5$)}    & \multicolumn{2}{c}{($0.02,10$)}   & \multicolumn{2}{c}{($0.03,15$)}   & \multicolumn{2}{c}{($0.04,20$)}   & \multicolumn{2}{c}{($0.05,20$)}    \\ \hline
                      & \multicolumn{1}{c}{Err}    & ESS  & \multicolumn{1}{c}{Err}    & ESS  & \multicolumn{1}{c}{Err}    & ESS  & \multicolumn{1}{c}{Err}    & ESS  & \multicolumn{1}{c}{Err}    & ESS  \\ \hline
$M = 200$             & \multicolumn{1}{c}{8.1E-3} & 68.2 & \multicolumn{1}{c}{7.7E-3} & 68.7 & \multicolumn{1}{c}{7.9E-3} & 68.9 & \multicolumn{1}{c}{8.3E-3} & 69.1 & \multicolumn{1}{c}{7.5E-3} & 71.2 \\ 
$M = 500$             & \multicolumn{1}{c}{7.8E-3} & 68.4 & \multicolumn{1}{c}{8.1E-3} & 68.3 & \multicolumn{1}{c}{7.6E-3} & 69.1 & \multicolumn{1}{c}{7.7E-3} & 70.0 & \multicolumn{1}{c}{7.8E-3} & 70.6 \\ 
$M = 1000$            & \multicolumn{1}{c}{7.8E-3} & 68.4 & \multicolumn{1}{c}{7.8E-3} & 68.6 & \multicolumn{1}{c}{8.1E-3} & 68.8 & \multicolumn{1}{c}{8.1E-3} & 69.3 & \multicolumn{1}{c}{7.9E-3} & 70.4 \\ 
Oracle                & \multicolumn{1}{c}{7.8E-3} & 68.2 & \multicolumn{1}{c}{7.7E-3} & 66.8 & \multicolumn{1}{c}{7.5E-3} & 63.8 & \multicolumn{1}{c}{7.1E-3} & 60.1 & \multicolumn{1}{c}{7.1E-3} & 59.6 \\ \hline
\end{tabular}
\label{tab:parameter_uncertainty}
\end{table}

\section{Conclusion}\label{sec:discussion}
In this work, 
we propose a generalization of the Bayesian sequential change detection problem, where the goal is not only to detect the change quickly but also to \textit{accelerate} it via adaptive experimental design.   
We first assume that the change-point and response models are completely known. We obtain the optimal solution under Markovian change-point models via dynamic programming approach, which however does not admit an explicit assignment rule and whose computation can be demanding. Thus, we propose an alternative, flexible procedure, and we establish its asymptotic optimality for a large class of (not necessarily Markovian) change-point models. Further, we extend the proposed procedure to the setup where the models have unknown parameters with a given joint prior distribution.  For future work, it would be interesting to study the acceleration of multiple changes, which  in  the educational setup   corresponds to the mastery of multiple skills \citep{baker2014educational,zhang2016smart}, 
and more complex response models in which the current response depends explicitly on previous responses.

\begin{acks}[Acknowledgments]
The authors  thank the anonymous referees, an Associate
Editor and the Editor for their constructive comments that improved the
quality of this paper.
\end{acks}

\begin{funding}
The first author was supported in part by the Natural Sciences and Engineering Research Council of Canada (NSERC) and Digital Research Alliance of Canada. The second author was supported in part by the U.S. National Science Foundation under Grants MMS 1632023 and CIF 1514245.
\end{funding}








\bibliographystyle{imsart-nameyear}
\bibliography{learning}

\begin{appendix}
\numberwithin{equation}{section}


\section{Proofs regarding the posterior odds process} \label{app:poster_odds}

Fix an assignment rule $\cX$. We  recall the definition of $\{\Lambda_t\}$ in Lemma \ref{lemma:recursive} and   set
\begin{equation} \label{LR}
\Rro^{s}_{t} \equiv \Pi_s \prod_{j=s}^{t} \frac{\Lambda_j}
{1- \Pi_j}, \quad \text{for }\; t \geq s \geq 0; \quad
\Lambda_0 \equiv 1.
\end{equation}
The following lemma justifies the interpretation of   $\Rro^{s}_{t}$ as the  ``likelihood ratio'' of the hypothesis $\Theta_{\cX}=s$ versus $\Theta_{\cX}>t$, and it is used in the  proof of Lemma \ref{lemma:recursive}, as well as in  the proof of Theorem 
\ref{thm:asymptotic_lower_bound}.

\begin{lemma} \label{lemma:change_of_distribution}
Fix integers $t \geq s \geq 0$ and an assignment rule $\cX$. For any  non-negative, measurable function $u: \bY^{t} \to [0,\infty]$  we have
\begin{equation*}
\Exp\left[u(Y_1, \ldots, Y_t);\; \Theta_\cX = s\right] = 
\Exp\left[u(Y_1,\ldots, Y_t) \; \Rro^{s}_{t};\; \Theta_\cX> t\right].
\end{equation*}
\end{lemma}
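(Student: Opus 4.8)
The plan is to prove the identity by conditioning on the hidden state sequence $\{L_s\}$ and the treatments, using the system equations~\eqref{sys_equ} to make the change of measure explicit. First I would observe that, since each $X_t$ is $\cF_{t-1}$-measurable and the responses $Y_1,\ldots,Y_t$ together with the treatments determine $u(Y_1,\ldots,Y_t)$, it suffices to prove the identity after conditioning on the sequence of treatments actually assigned; equivalently, one may argue path by path in the treatment sequence. On the event $\{\Theta_\cX = s\}$ we have $L_0 = \cdots = L_{s-1} = 0$ and $L_s = \cdots = L_t = 1$, so the conditional density of $(Y_1,\ldots,Y_t)$ with respect to $\mu^{\otimes t}$ is
\[
\Bigl(\prod_{j=1}^{s-1} f_{X_j}(Y_j)\Bigr)\Bigl(\prod_{j=s}^{t} g_{X_j}(Y_j)\Bigr),
\]
whereas on the event $\{\Theta_\cX > t\}$ it is $\prod_{j=1}^{t} f_{X_j}(Y_j)$. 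The ratio of these two densities is exactly $\prod_{j=s}^{t} \Lambda_j(\cX) = \prod_{j=s}^{t} g_{X_j}(Y_j)/f_{X_j}(Y_j)$.

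Next I would handle the change-point probabilities separately from the response densities. Writing things out, $\Pro(\Theta_\cX = s \mid \cF_t, X_1,\ldots,X_t)$ and $\Pro(\Theta_\cX > t \mid \cF_t, X_1,\ldots,X_t)$ both factor through the transition functions: using $\Pi_j(\cX) = \pi_j(X_1,\ldots,X_j)$ and the recursive structure in~\eqref{sys_equ}, the (unconditional-on-responses) probability of the event $\{L_0=\cdots=L_{s-1}=0,\ L_s=1\}$ given the treatments is $\Pi_s(\cX)\prod_{j=0}^{s-1}(1-\Pi_j(\cX))$ (with $\Pi_0 \equiv \pi_0$), while that of $\{L_0=\cdots=L_t=0\} = \{\Theta_\cX > t\}$ is $\prod_{j=0}^{t}(1-\Pi_j(\cX))$. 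Their ratio is $\Pi_s(\cX)\prod_{j=s}^{t}\bigl(1-\Pi_j(\cX)\bigr)^{-1}$. Multiplying the two ratios gives precisely $\Rro^s_t(\cX)$ as defined in~\eqref{LR}. The key point making this factorization legitimate is the conditional independence structure in Figure~\ref{fig:model}: given $L_t$ and $X_t$, the response $Y_t$ is independent of the past, and given the treatments, the $L$-chain evolves autonomously through the $\Pi_j$'s; so the joint law of $(L_{0:t}, Y_{1:t})$ given the treatment path is a product of the $L$-transitions and the $Y$-emissions.

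Then I would assemble the pieces: write $\Exp[u(Y_{1:t}); \Theta_\cX = s]$ as an integral against $\mu^{\otimes t}$ of $u$ times the joint density on $\{\Theta_\cX = s\}$, and rewrite that density as (joint density on $\{\Theta_\cX > t\}$) $\times \Rro^s_t(\cX)$, which is permissible wherever the $\{\Theta_\cX > t\}$-density is positive; on the $\mu^{\otimes t}$-null-or-not set where it vanishes, the $f_{X_j}$ factors force $\Rro^s_t$ to be interpreted as the appropriate limit and one checks both sides vanish or match by a standard truncation/monotone-class argument (this is why $u \ge 0$ and values in $[0,\infty]$ are assumed). Integrating $u \cdot \Rro^s_t(\cX)$ against the $\{\Theta_\cX > t\}$-density and re-attaching the treatment-path averaging yields $\Exp[u(Y_{1:t})\,\Rro^s_t(\cX); \Theta_\cX > t]$, which is the claim.

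The main obstacle I expect is bookkeeping rather than conceptual: making the conditioning on the (adaptive, $\cF_{t-1}$-measurable) treatment sequence rigorous, since $X_t$ depends on $Y_{1:t-1}$ and thus is not independent of the responses. The clean way around this is to note that everything in sight — $u(Y_{1:t})$, $\Rro^s_t(\cX)$, and the events $\{\Theta_\cX = s\}$, $\{\Theta_\cX > t\}$ — is a measurable function of $(Y_{1:t}, L_{1:t})$, so one can work directly with the joint law of $(Y_{1:t}, L_{1:t})$ built from~\eqref{sys_equ}: condition on $(X_1,\ldots,X_t)$ being equal to a deterministic path, which is legitimate because $X_j$ is determined by $Y_{1:j-1}$, prove the identity for each such path (where it is just the elementary likelihood-ratio computation above), and take expectation over the path. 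A secondary technical nuisance is the possibility that some $g_{X_j}$ is not absolutely continuous-friendly where $f_{X_j}$ vanishes; but since both densities are with respect to the common dominating measure $\mu$ and $\Rro^s_t$ is defined as the explicit product, one only needs the elementary fact that on $\{\Theta_\cX > t\}$ the integrand $u\,\Rro^s_t$ puts no mass where $\prod f_{X_j} = 0$, so the identity extends by the usual $0 = 0$ convention.
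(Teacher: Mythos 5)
Your argument is correct and is essentially the paper's own proof: both write the two expectations as explicit integrals over $\bY^t$ with the treatments (and hence the transition probabilities $\Pi_j$) substituted as measurable functions of past responses, and then observe that the joint density on $\{\Theta_\cX=s\}$ equals the joint density on $\{\Theta_\cX>t\}$ multiplied by $\Rro^{s}_{t}(\cX)$. The only cosmetic differences are your explicit partition over the (finitely many) treatment paths and your discussion of the set where $f_{X_j}$ vanishes, which is harmless since \eqref{assumptions:response_KL} forces $f_x$ and $g_x$ to be mutually absolutely continuous.
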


\begin{proof}
For $t \in \bN$ we set $y_{1:t} \equiv  (y_1,\ldots, y_t)$. Since $\cX$ is an assignment rule, there exists a sequence of measurable functions $\{x_j: j \geq 1\}$, such that $X_j = x_j(Y_{1:{j-1}})$.
For any non-negative measurable functions $u: \bY^{t} \to \bR$, by an iterated conditioning argument  we have
\begin{align*}
\Exp\left[u(Y_{1:t}); \Theta = s\right] &= \int u(y_{1:t}) \,  \pi_s \prod_{i=0}^{s-1} (1-\pi_i) \prod_{i=1}^{s-1}f_{x_i}(y_i) \prod_{j=s}^{t} g_{x_j}(y_j) \; d\mu^{t}(y_{1:t}), \\
\Exp\left[u(Y_{1:t}); \Theta > t\right] &= \int u(y_{1:t})  \,  \prod_{i=0}^{t} (1-\pi_i) \prod_{i=1}^{t} f_{x_i}(y_i)\; d\mu^{t}(y_{1:t}),
\end{align*}
where we drop the arguments of $\{\pi_t\}$ and $\{x_t\}$ to simplify the notation.                      
Since $u(\cdot)$ is arbitrary, in view of the definition \eqref{LR} of $\Rro^{s}_{t}$   we have
\begin{equation*}
\Exp\left[u(Y_{1:t});\; \Theta= s\right] = 
\Exp\left[u(Y_{1:t})\, \Rro^{s}_{t};\; \Theta > t\right],
\end{equation*}
which completes the proof.
\end{proof}

\begin{proof}[Proof of Lemma \ref{lemma:recursive}]
In view of Lemma \ref{lemma:change_of_distribution} and  the definition  of $\Rro^{s}_{t}$ in \eqref{LR},
for any $t \in \bN$ and $B \in \cF_t$ we have
\begin{align*}
\Exp\left[L_{t} = 1; B \right] &= 
\sum_{s=0}^{t} \Pro(B, \Theta= s) = 
\sum_{s=0}^{t} \Exp\left[\Rro^{s}_{t}; \,B, \Theta > t\right] \\
&= \Exp\left[\; \sum_{s=0}^{t}  \Rro^{s}_{t};\, B, L_{t} = 0\;\right]
= \Exp\left[\; \Pro(L_{t} = 0 \vert \cF_t) \sum_{s=0}^{t}  \Rro^{s}_{t} ;\, B  \; \right].
\end{align*}
By the definition of conditional expectation we have
$$
\Pro(L_{t} = 1 \, \vert \, \cF_t) = \Pro(L_{t} = 0  \, \vert \, \cF_t)  \, \sum_{s=0}^{t} \Rro^{s}_{t},
$$
and, in view of the definition \eqref{posterior_odds_prob} of the posterior odds,   we obtain
$$
\Gamma_t \;=\; 
\sum_{s=0}^{t} \Rro^{s}_{t} \;=\;
\sum_{s=0}^{t}   \Pi_s  \, \prod_{j=s}^{t}  \frac{\Lambda_j}
{1- \Pi_j}.
$$
Then, simple algebra shows that  $\{\Gamma_t, t \geq 0\}$ admits the recursive form given in Lemma \ref{lemma:recursive}.
\end{proof}


\begin{proof}[Proof of Lemma \ref{lemma:err_control}] \label{app:err_control}
For any integer $t \geq 0$, by the definition of $\Gamma_t$ in 
\eqref{posterior_odds_prob}  we have
$$
\Pro(L_t = 0 \,  \vert \,  \cF_t) = \frac{1}{1 + \Gamma_t}.
$$
Since $S$ is an $\{\cF_t\}$-stopping time, for any $B \in \cF_S$ we have $B \cap \{S = t\} \in \cF_t$, thus
\begin{align*}
\Pro(L_{S} = 0, B) &= \sum_{t=0}^{\infty} \Pro(L_t = 0, S = t, B) =\sum_{t=0}^{\infty} \Exp\left[ 
\Pro(L_t = 0 \,  \vert  \, \cF_t); S = t, B
\right] \\
&= \sum_{t=1}^{\infty} \Exp\left[ 
\frac{1}{1+\Gamma_t}; S = t, B
\right] = \Exp\left[\frac{1}{1+\Gamma_S}; B \right],
\end{align*}
which completes the proof by the definition of conditional expectation.
\end{proof}

\section{Proofs regarding the dynamic programming approach}\label{app:dp}

\subsection{The conditional density of responses}\label{app:density_of_Y}
In this subsection, we compute the conditional distribution of $Y_t$ given the past $\cF_{t-1}$ for $t \geq 1$.  For $t \geq 0$, denote by $\widehat{\Gamma}_t$ the posterior \textit{probability}  that the change has already occurred at time $t$, i.e.,
$$  \widehat{\Gamma}_t \; \equiv \;\Pro(L_t = 1 \, \vert \, \cF_t).
$$
Clearly $\widehat{\Gamma}_t = {\Gamma_t}/{(1+\Gamma_t)}$ for $t \geq 0$. 
For any $B \in \cB(\bY)$,  we have
\begin{align*}
\Pro(Y_{t} \in B \, \vert \,  \cF_{t-1})
 &= \Pro(Y_{t} \in B,L_{t}=1, L_{t-1} = 1 \, \vert \,  \cF_{t-1}) \\
&+\Pro(Y_{t} \in B, L_{t}=1, L_{t-1}=0 \, \vert \,  \cF_{t-1})  \\
&+ \Pro(Y_{t} \in B , L_{t}= 0, L_{t-1} = 0 \, \vert \,  \cF_{t-1}).
\end{align*}
Denote the three terms on the right-hand side by I, II, and III respectively. Then by the definition of the transition probability $\Pi_t$, we have
\begin{align*}
\text{III}&= \int_B f_{X_t}(y)  \, \Pro(L_{t}=0, L_{t-1} = 0 \, \vert \,  \cF_{t-1}) \,\mu(dy)\\
&= \int_B f_{X_t}(y) \,  (1-\Pi_{t}) \, \Pro(L_{t-1} = 0 \,  \vert \,  \cF_{t-1}) \,\mu(dy)\\
&= \int_B f_{X_t}(y) \, (1 - \Pi_{t}) \, (1-\widehat{\Gamma}_{t-1}) \,\mu(dy).
\end{align*}
By similar arguments, we have
\begin{align*}
\text{I} &= \int_B g_{X_t}(y) \, \widehat{\Gamma}_{t-1} \,\mu(dy),\quad
\text{II} = \int_B g_{X_t}(y) \Pi_{t} (1-\widehat{\Gamma}_{t-1})\, \mu(dy).
\end{align*}
Combining the three terms and replacing $\widehat{\Gamma}_t$ by $ {\Gamma_t}/{(1+\Gamma_t)}$, we have
$$
\Pro(Y_{t} \in B \vert \cF_{t-1}) = \int_B \frac{1}{1 + \Gamma_{t-1}} \left( (\Gamma_{t-1} + \Pi_t) g_{X_t}(y) + (1-\Pi_t) f_{X_t}(y) \right)\; \mu(dy).
$$
Thus, the conditional density of $Y_t$ given  
$\cF_{t-1}$, relative to $\mu$,
is  $\phi(y;\Gamma_{t-1},\Pi_{t}, X_t)$,
where for each $(y,\gamma,\pi,x) \in \bY \times {\bR}_{+} \times [0,1] \times [K]$,
\begin{align}\label{app:Y_post_density}
    \phi(y; \gamma, \pi, x) =  \frac{1}{1+\gamma} \left(
    (\gamma+\pi)g_x(y)  + (1-\pi) f_x(y)\right).
\end{align}

\subsection{Verification of the Monotone Increase structure} \label{app:mi}
In this subsection, we verify that the Markov decision process (MDP) $\{({\Gamma}_t,\bS_t, \Delta_t), t \geq 0\}$ under actions $\{U_t, t \geq 0\}$ satisfies the ``Monotone Increase'' structure \cite[Assumption I in Chapter 4.3]{bertsekas2022abstract} with $\bar{J}$ being the zero function in $\cJ$. Recall that the transition dynamic is defined in \eqref{dp:transition_S_D} and \eqref{dp:transition_Gamma}, and the cost function 
   $\Tilde{c}(\cdot)$ in Subsection 
\ref{subsec:auxilliary}. Since the cost function  $\tilde{c}$ is non-negative, this is known to be a positive cost MDP model \cite[Chapter 3, Volume II]{bertsekas1995dynamic}.

As in \cite{bertsekas2022abstract}, we define a map $H: \mathcal{S} \times \cU \times \cJ \to [0,\infty]$ as follows: for $s \in \mathcal{S}$, $u \in \cU$, and $J \in \cJ$,
$$
H(s,u,J) \equiv \Tilde{c}(s,u) + \Exp\left[ J(\Gamma_1, \bS_1, \Delta_1)\
\vert \ ({\Gamma}_0, \bS_0 , \Delta_0 ) = s,\ U_0 = u \right].
$$
Thus, for each action $u \in \cU$ and state $s\in \mathcal{S}$, 
\begin{align*}
    \cT_{c,u}(J)(s) = H(s,u,J), \quad 
    \cT_c(J)(s) = \min_{u \in \cU} H(s,u,J).
\end{align*}

Now we verify the ``Monotone Increase'' structure \cite[Assumption I in Chapter 4.3]{bertsekas2022abstract} with $\bar{J}$ being the zero function in $\cJ$. For part (a) of \cite[Assumption I in Chapter 4.3]{bertsekas2022abstract}, since $\tilde{c}(\cdot) \geq 0$ by definition, we have 
$ \bar{J} = 0 \leq H(s,u, \bar{J})$ for any $s \in \mathcal{S}$ and $u \in \cU$. For part (b), for a sequence of non-decreasing functions $\{J_m \in \cJ: m \geq 1\}$ and a function $J \in \cJ$ such that $J_m(s) \geq 0$ and $\lim_{m \to \infty} J_m(s) = J(s)$ for any $s \in \mathcal{S}$, by the monotone convergence theorem, $\lim_{m \to \infty} H(s,u,J_m) = H(s,u,J)$ for any $s \in \mathcal{S}$ and $u \in \cU$. For the last part (c), by definition, $H(s,u, J+ re) = H(s,u,J) + r$ for any scalar $r>0$, where $e \in \cJ$ is the constant function with value $1$; thus, part (c) holds with $\alpha = 1$.

\subsection{Proof of Theorem \ref{dp_structural}} \label{app:proof_structural}
The proof of Theorem \ref{dp_structural} relies on the following Lemma.

\begin{lemma} \label{lemma:concavity_of_opt}
Fix any $c>0$, $\zeta \in \bR^{\kappa}$, and $\delta \in \{0,1\}$. The function that maps $\gamma \in {\bR}_{+}$ to $
(1+\gamma)J_c^*(\gamma, \zeta,\delta) \in \bR$ is concave.
\end{lemma}
\begin{proof}
Recall from Subsection \ref{subsec:auxilliary} that 
\begin{equation*}
\lim_{t \to \infty} \cT_{c}^{\bigotimes t}(0)(\gamma,\zeta,\delta) = J_c^*(\gamma,\zeta,\delta)\;\; \text{ for all }  (\gamma,\zeta,\delta) \in \mathcal{S}.
\end{equation*}
Since the point-wise limit operation preserves concavity,   it suffices to show that for any $J \in \cJ$, 
if $\gamma \mapsto (1+\gamma)J(\gamma, \zeta,\delta)$ is concave for any fixed $(\zeta,\delta) \in \bR^{\kappa} \times  \{0,1\}$, we have that $\gamma \mapsto (1+\gamma)\cT_c(J)(\gamma, \zeta,\delta)$  is concave for any fixed $(\zeta,\delta) \in \bR^{\kappa} \times  \{0,1\}$.   

Fix $c>0$, $\zeta \in \bR^{\kappa}$, $\delta \in \{0,1\}$ and a function $J \in \cJ$ such that $\gamma \mapsto (1+\gamma)J(\gamma, \zeta',0)$ is concave for any $\zeta' \in \bR^{\kappa}$. By definition, $\cT_c(J) = \min_{u \in \cU} \cT_{c,u}(J)$. Since  the point-wise minimum operation preserves  concavity and by definition $\cT_c(J)(\cdot,\zeta,1) = 0$, it suffices to show that for any $u \in \cU$, $\gamma \mapsto (1+\gamma)\cT_{c,u}(J)(\gamma,\zeta,0)$ is concave.

Fix $u = (u^{(1)}, u^{(2)}) \in \cU = [K]\times\{0,1\}$. Recall the definitions of the operator $\cT_{c,u}$ and  the cost function $\Tilde{c}(\cdot)$ in   Subsection \ref{subsec:auxilliary}, and  the $\mu$-density $\phi$ in \eqref{app:density_of_Y}. If $u^{(2)} = 1$, then $\cT_{c,u}(J)(\gamma,\zeta,0) = 1/(1+\gamma)$ for $\gamma \in {\bR}_{+}$, and thus $\gamma \mapsto (1+\gamma)\cT_{c,u}(J)(\gamma,\zeta,0)$ is a concave function in $\gamma$. If 
$u^{(2)} = 0$ and $u^{(1)} = x \in [K]$, due to \eqref{dp:transition_Gamma}, for $\gamma \in {\bR}_{+}$,
$$
 \cT_{c,u}(J)(\gamma,\zeta,0) =   c +   \int J\left(h(y; \gamma, \pi_x, x), \zeta_x, 0 \right) \; {\phi}(y;\gamma,\pi_x,x)\;  \mu(dy),
$$
where $\zeta_x \equiv \Phi(x, \zeta)$, $
\pi_x \equiv \Psi(x, \zeta)$ and 
$$
h(y; \gamma, \pi, x) \equiv  \frac{\gamma + \pi}{1 - \pi} \frac{g_x(y)}{f_x(y)}.
$$
Note that by definition of $\phi$ in \eqref{app:density_of_Y}, 
$$\phi(y; \gamma, \pi, x) = 
 \frac{1-\pi}{1+\gamma} f_x(y) \left( 1+ h(y; \gamma, \pi, x) \right),
$$
which implies that
\begin{align*}
    (1+\gamma)  \,  \cT_{c,u}(J)(\gamma,\zeta,0) 
    &= c(1+\gamma) \\ 
     &+\int \left( 1+ h(y; \gamma, \pi_x, x) \right) J\left(h(y; \gamma, \pi_x, x), \zeta_x, 0 \right) \; (1-\pi_x) f_x(y) \mu(dy).
\end{align*}

Recall that  $x \in [K]$ and $\zeta \in \bR^{\kappa}$ are fixed. 
For any \textit{fixed} $y \in \bY$,  the first mapping below is linear in $\gamma$,  and the second is concave due to the assumption on $J$:
$$
\gamma \mapsto h(y; \gamma, \pi_x, x) \mapsto \left( 1+ h(y; \gamma, \pi_x, x) \right) J\left(h(y; \gamma, \pi_x, x), \zeta_x, 0 \right),
$$
which implies that the integrand is concave in $\gamma$. 
Since the integration operation (with respect to $y$) preserves concavity, we have that
$
\gamma \mapsto (1+\gamma)  \cT_{c,u}(J)(\gamma,\zeta,0) 
$
is concave for any fixed $\zeta \in \bR^{\kappa}$ and action $u \in \cU$. Thus, the proof is complete.
\end{proof}

\begin{proof}[Proof of Theorem \ref{dp_structural}]
From the definition of $T_c^*$ in \eqref{dp_optimal} it follows that  
\begin{align*}
&T_c^* = \inf\{t \geq 0: {\Gamma}_{t} \in B_c(\bS_t)\}, \; \text{ where }\; \\
&B_c(\zeta) \equiv \{\gamma \in {\bR}_{+}: (1+\gamma)J_c^*(\gamma,\zeta,0)  \geq 1\}, \quad \zeta \in \bR^{\kappa}.
\end{align*}

By Lemma \ref{lemma:concavity_of_opt},  $B_c(\zeta)$ is a convex subset of ${\bR}_{+}$ for any $\zeta \in \bR^{\kappa}$, which implies that $B_c(\zeta)$ is a possibly unbounded interval. Due to concavity, $\gamma \mapsto (1+\gamma)J_c^*(\gamma,\zeta,0)$ is continuous for any $\zeta \in \bR^{\kappa}$, and thus $B_c(\zeta)$ is a closed interval. 

Recall the definition of $J_c(\cdot; \cX,T)$ and $J_c^*(\cdot)$ in Subsection \ref{subsec:auxilliary}. For any procedure $(\cX,T)$ and $(\gamma,\zeta) \in \bR_{+}\times \bR^{\kappa}$
$$
J_c(\gamma,\zeta,0; \cX, T) \geq c  \, \mathbbm{1}\{U_0^{(2)} = 0\} + (1+\gamma)^{-1} \, \mathbbm{1}\{U_0^{(2)} = 1\} \geq \min\{c, (1+\gamma)^{-1}\}.
$$
As a result,
$$
(1+\gamma) \,  J_c^*(\gamma,\zeta,0) \equiv (1+\gamma) \inf_{(\cX, T)} J_c(\gamma,\zeta,0; \cX, T) \geq \min\{c(1+\gamma), 1\},
$$
which implies that $[1/c,\infty) \subset B_c(\zeta)$ for any $\zeta \in \bR^{\kappa}$. 
Since $B_c(\zeta)$ is a closed interval containing $[1/c,\infty)$, $B_c(\zeta)$ must be of the form $[b_c(\zeta),\infty)$ for some function $b_c: \bR^{\kappa} \to \bR_{+}$, which completes the proof.
\end{proof}

\section{Proof of  Theorem \ref{thm:upper_bound_T} } \label{app:proofs_regarding_upper_bound}

In this section, we prove  Theorem \ref{thm:upper_bound_T}, which provides a non-asymptotic upper bound on the average sample size of the proposed procedure in Section \ref{sec:mastery}.  
We denote by $\Delta S_n$ the duration of $n^{th}$ stage, 
 i.e.,  $\Delta S_n\equiv S_n-S_{n-1}$, where  $n \in \bN$,   and observe that  
\begin{equation*}
S_{2 N}= \sum_{m=1}^{\infty} ( \Delta S_{2m-1} + \Delta S_{2m}) \; \ind{\{N \geq m\}},
\end{equation*}
 where $N$ is the  number of cycles until stopping. For every $m \in \bN$ we have $\{{N} \geq m\} \in \cF_{{S}_{2m-2}} \subset \cF_{{S}_{2m-1}}$, therefore by the law of iterated expectation we obtain
\begin{align}\label{proposed_T_decomp}
\Exp[\mwd{T}] \; = \;  \sum_{m = 1}^{\infty} \Exp\left[\,\Exp[\Delta S_{2m-1} \, \vert \,  \cF_{S_{2m-2}}] +  \Exp[\Delta S_{2m}  \, \vert \,  \cF_{S_{2m-1}}];\, N \geq m \right].
\end{align}
For each $m \in \mathbb{N}$,  we establish in  Subsection \ref{subsec:upper_each_stage} an upper bound on the conditional expected lengths of the acceleration stage  and the detection stage  in the $m^{th}$ cycle,  $\Exp[\Delta S_{2m-1} \vert \cF_{S_{2m-2}}]$ and  $\Exp[\Delta S_{2m} \vert \cF_{S_{2m-1}}]$,
respectively. These bounds are   deterministic and do not depend on the cycle index $m$, which implies that  the resulting upper bound for $\Exp[\mwd{T}]$
is proportional to the expected number of cycles, $\Exp[N]$.  In Subsection \ref{subsec:upp_cycles}  we establish an upper bound on $\Exp[N]$, and in Subsection \ref{subsec:upper_bound_T} we combine
 these two bounds to complete the proof.


We start by introducing preliminaries  that are used throughout this Appendix.  The first one is a  representation of  $\{L_0, L_{t}, Y_{t},  t \in \bN\}$ in terms of dynamic system equations. Since the response space, $\bY$, is  assumed to be  Polish,  there exists
 \cite[Lemma 3.22]{kallenberg2006foundations}  a measurable function $h: [K] \times \{0,1\} \times (0,1) \to \bY$ such that, 
for each $x \in [K]$,   $h(x,0,V)$ (resp. $h(x,1,V)$) has density $f_x$ (resp. $g_x$) relative to the reference measure $\mu$, where $V\sim \text{Unif}(0,1)$.
 
   Now, if $\{U_t\}$ and $\{V_t\}$ are two independent sequences of i.i.d. $\text{Unif}(0,1)$ random variables on $(\Omega, \cF,\Pro)$,  and for any given assignment rule $\cX$ we set $L_0 \equiv  \mathbbm{1}\{U_0 \leq \pi_0\}$ and 
\begin{align}
\begin{split}
\label{sys_equ}
L_{t} &= \mathbbm{1}{\{L_{t-1}=1\}} + \mathbbm{1}{\{L_{t-1} =0, \; U_t \leq\, \Pi_t \}} 
 \quad \text{ and } \quad 
 Y_t =  h(X_t, L_t, V_t), \quad t \in \bN,
\end{split}
\end{align}
where $\mathbbm{1}\{\cdot\}$ is the indicator function, then 
 $\{L_0, L_{t}, Y_{t},  t \in \bN\}$ has the distribution as described  in Section \ref{sec:ProbFormulation}. Moreover, for any  integer $t \geq 0$ we denote  by $\cH_t$  the $\sigma$-algebra that is generated by all  sources of randomness up to time $t$, i.e., 
\begin{equation}
\label{def:hidden_filtration}
\cH_t \equiv \sigma(U_0, U_s, V_s: 1 \leq s\leq t).
\end{equation}
Clearly,  $\cF_t \subset \cH_t$ for every $t \in \bN$, and $\Theta$ is an $\{\cH_t\}$-stopping time.

%
%
%
%


We continue with two observations regarding the proposed procedure.  First, when $b_1$ is sufficiently large,  with high probability  the change  has already occurred  by the end of an acceleration stage.  Second,  when $d$ is sufficiently large,
 with a high probability, the change is detected in the first detection stage after its occurrence. We state these observations formally in the following lemma. Here, and in what follows, if $n \in \bN$ is an \textit{odd} integer we set 
\begin{equation} \label{B_n}
B_n \equiv \left\{\tau(S_{n},d) < \sigma(S_{n},b_2) \right\}, 
\end{equation}
i.e.,  $B_{n}$ is the event that the detection is negative in the detection stage starting at $S_n+1$.


\begin{lemma}\label{lemma:aux_change_in_training}
If  $n \in \bN$ is \textit{odd}, then 
on the event $\{S_{n} < \widetilde{T}\}$ we have
\begin{align*}
\Pro\left(L_{S_{n}} = 0  \, \vert \,  \cF_{S_{n}} \right) &\leq 1/(1+b_1),\\
\Pro(B_{n}, L_{S_{n}} = 1 \, \vert \,  \cF_{S_{n}}) &\leq (1/d) \, \Pro\left(L_{S_{n}} = 1 \, \vert  \, \cF_{S_{n}} \right).
\end{align*}
\end{lemma}

\begin{proof}
By the definition of the proposed procedure,     the posterior odds process at the end of an acceleration stage is not smaller than  $b_1$, i.e., $\Gamma_{S_{n}} \geq b_1$ for every odd $n \in \bN$. As a result,  the first inequality follows by  Lemma \ref{lemma:err_control}. 
To prove the second inequality, we  recall  representation \eqref{sys_equ} of the responses $\{Y_t\}$ in terms of the ``noise'' sequence  $\{V_{t}\}$, and we  set
\begin{equation*}
Y'_t \equiv h(\Xi_2(t), 1, V_{S_n+t} ), \quad t  \in \bN.
\end{equation*}
That is,  $\{Y'_t\}$ is the sequence of responses that are obtained after $S_n$  if the detection block $\Xi_2$ is assigned \textit{forever} after $S_n$  and the change had already happened by $S_n$.  
These responses coincide with the actual responses of the proposed procedure after $S_{n}$ and until the end of the subsequent detection stage $S_{n+1}$,  on the event $\{L_{S_n} = 1\}$.   Indeed,  for 
$1\leq t \leq S_{n+1} - S_n$,
$$
Y_{S_n+t} = h(X_{S_n+t},1,V_{S_n+t}) = h(\Xi_2(t),1,V_{S_n+t}) =  Y'_t.
$$
Thus, on the event $B_{n} \cap \{L_{S_n} = 1\}$, where the change has already occurred at $S_n$ and the subsequent detection is negative,  
\begin{equation*} 
S_{n+1} - S_n= \tau(S_n,d) =  \inf\left\{t \geq 1: \prod_{j=1}^{t} \frac{f_{\Xi_2(j)}(Y'_j)}{g_{\Xi_2(j)}(Y'_j)} \; \geq d \right\} \equiv \tau'.
\end{equation*}
As a result, 
\begin{align*}
\Pro(B_{n}, L_{S_n} = 1 \, \vert \, \cF_{S_n}) 
&= \Pro( L_{S_n} = 1, \; \tau(S_n, d) < \sigma(S_n, b_2) \, \vert \, \cF_{S_n})\\
&\leq \Pro(L_{S_n} = 1, \tau' < \infty \, \vert \,  \cF_{S_n})\\
&= \Exp[ \Pro(\tau' < \infty \, \vert \,  \cH_{S_n}); \, L_{S_n} = 1 \, \vert \, \cF_{S_n}].
\end{align*}
Since $S_n$ is an $\{\cH_t\}$-stopping time,  by Lemma \ref{lemma:iid_independent} it follows that  $\{V_{S_n+t}, t \geq 1\}$ are i.i.d. $\text{Unif}(0,1)$ and  independent of $\cH_{S_n}$. Thus, $\{Y_t': t\geq 1\}$ is a sequence of independent random variables, independent of $\cH_{S_n}$, and  each $Y_{t}'$ has density $g_{\Xi_2(t)}$ relative to $\mu$. Thus, we can apply  Lemma \ref{lemma:one_sided_SPRT} and obtain
$\Pro(\tau' < \infty \vert \cH_{S_n}) \leq 1/d$, and this  completes the proof.
\end{proof}


\subsection{Upper bound on the average number of cycles} \label{subsec:upp_cycles}

In Lemma \ref{lemma:N_upp_bd} we establish an upper bound on the expected number of cycles, which implies that with high probability there is only \textit{one} cycle when the values of $b_1$ and $d$ are sufficiently large. We note that this lemma does not require any assumption on the change-point model.

\begin{lemma}\label{lemma:N_upp_bd}
Assume \eqref{assumptions:response_KL}  holds.   For any  $b_1,d > 1$ and $m  \in \bN$,
$$
\Pro(N > m) \leq \eta^{m}, \quad \text{ where }\; \eta \equiv \frac{b_1 +d}{d(1+b_1)}.
$$
Consequently, $\Exp[N] \leq 1 + \eta/(1-\eta)$ and  $\Exp[{N}] \rightarrow  1$ as 
 $\min\{b_1, d\} \to \infty$.
\end{lemma}

\begin{proof}
For any  $m  \in \bN$, $B_{2m-1}$, defined in \eqref{B_n}, is the event that the detection in the $m^{th}$ cycle is negative. Thus, 
\begin{align} \label{telescope}
\begin{split}
\Pro({N} > m) &= \Pro({N} > m -1, B_{2m-1})
= \Exp[ \Pro(B_{2m-1} | \cF_{{S}_{2m-1}}); {N} > m-1].
\end{split}
\end{align}
By Lemma \ref{lemma:aux_change_in_training} we have
\begin{align*}
\Pro(B_{2m-1} \vert \cF_{S_{2n-1}})&
\leq \Pro(B_{2m-1},\,  L_{S_{2m-1}}  = 1 \, \vert \, \cF_{S_{2m-1}})
+ \Pro(L_{S_{2m-1}} = 0 \, \vert \, \cF_{S_{2m-1}}) \\
& \leq (1/d) \, \Pro(L_{S_{2m-1}} = 1 \, \vert \,  \cF_{S_{2m-1}}) + \Pro(L_{S_{2m-1}} = 0 \, \vert \, \cF_{S_{2m-1}}) \\
& = 1/d + (1-1/d) \,  \Pro(L_{S_{2m-1}} = 0 \, \vert \, \cF_{S_{2m-1}}) \\
&\leq 1/d +  (1 - 1/d)/ (1+b_1) \equiv \eta.
\end{align*}
The result now follows after applying a  telescoping argument to  \eqref{telescope}.
\end{proof}

%
%

\subsection{Upper bounds on the conditional average duration of each stage} \label{subsec:upper_each_stage}

In this Subsection, we establish an upper bound on the conditional expected duration of each stage, $\Delta S_n$. In order to do so, we couple the system  $\{\Pi_{t}, L_{t}, \mwd{X}_{t}, Y_{t},\Gamma_{t}; \,  t \in \bN \}$ that is associated with the proposed assignment rule with hypothetical systems that \textit{coincide with the original during the stage of interest}, but are easier to analyze. 



Thus, for any $n \in \bN$ we set 
\begin{align*}
\begin{cases}
x^n_t = \Xi_1(t), \quad b^n = b_1, \quad \Xi^n = \Xi_1, \quad \text{ if } n \text{ is odd}, \\
x^n_t = \Xi_2(t), \quad b^n = b_2, \quad \Xi^n = \Xi_2,\quad \text{ if } n \text{ is even}. \\
\end{cases}
\end{align*}
and define  $\{\Pi^{n}_t, L^{n}_t, Y^{n}_t,\Gamma^{n}_t: t\geq 1\}$ to be a system that describes the evolution of transition probabilities, latent states, responses, and posterior odds process of the original system after time $S_{n-1}$ \textit{if we assign block $\Xi^n$  forever after this time}.  Specifically,  we write $S$ for $S_{n-1}$ for simplicity,  set
$L^{n}_0 \equiv  L_{S}$, $\Gamma^{n}_0 \equiv \Gamma_{S}$, and   for each $t \in \bN$
\begin{align}\label{aux_couple_dynamic}
\begin{split}
\Pi^{n}_t &\equiv \pi_{S+t}(X_1,\ldots,X_S, \Xi^n(1:t)),\\
L^{n}_{t} & \equiv \mathbbm{1}{\{L^{n}_{t-1}=1\}} + \mathbbm{1}{\{L^{n}_{t-1} =0, \; U^{n}_t \leq\, \Pi^{n}_{t}\}}, \\
   Y^{n}_{t} &\equiv  h(\Xi^n(t), L^{n}_{t}, V^{n}_{t}),  \\
\Gamma^{n}_{t} &\equiv   (\Gamma^{n}_{t-1} + \Pi^{n}_t)\; \frac{g_{\Xi^n(t)}(Y^{n}_t)}
{(1-\Pi^{n}_t)f_{\Xi^n(t)}(Y^{n}_t)},
  \end{split}
\end{align}
where $(U^{n}_t, V^{n}_t) \equiv (U_{S+t}, V_{S+t})$ is the same ``noise'' that drives the original system after time $S$ (see \eqref{sys_equ}). The evolution of this hypothetical system  coincides with the $n^{th}$ stage of the original system,  in the sense that for any $1 \leq t \leq {S}_n - S_{n-1}$,
\begin{equation}\label{hypothetical_coincide}
(\Pi_{S+t}, L_{S+t},X_{S+t}, Y_{S+t},\Gamma_{S+t}) = (\Pi^{n}_t, L^{n}_t,\Xi^n(t), Y^{n}_t,\Gamma^{n}_t).
\end{equation}
Let $\Theta^n$ denote the change-point of the above  hypothetical system, and let
$\rho^n$ denote the required time after  $\Theta^n$ for the posterior odds process of this hypothetical system to reach the corresponding threshold $b^n$, i.e., 
\begin{align}\label{def:hypothetical_change_point}
\Theta^n \equiv \inf\{t\geq 1: L^{n}_t = 1\}, \quad
\rho^{n} \equiv \inf\{t \geq 0: \Gamma^{n}_{\Theta^{n} +t} \geq b^n\},
\end{align}
where $\rho^{n}$ is well defined only on the event $\{\Theta^{n} < \infty\}$. We can now obtain a pathwise bound for the actual duration of the $n^{th}$ stage, $\Delta S_n$,  of the proposed proposed procedure in terms of the quantities  $\rho^{n}$ and $\Theta^{n}$ of the above hypothetical system. 

\begin{lemma}\label{lemma:orginal_hypothetical_fir}
For each $n \in \bN$  we have   
$
\Delta S_{n} \leq \Theta^{n} + \rho^n \, \mathbbm{1}_{\{\Theta^n < \infty \}}.
$ 
\end{lemma}

\begin{proof}
For each $n \in \bN$ we define $\sigma^{n}$ to be the first time the posterior odds process of the above hypothetical system,  $\Gamma^{n}$, 
exceeds  threshold $b^n$, i.e., 
$$
\sigma^n \equiv \inf\{t\geq 1: \Gamma^{n}_t \geq b^{n}\}.
$$
In view of the definition of $\rho^{n}$ in \eqref{def:hypothetical_change_point} we have $\sigma^n \leq \Theta^{n} + \rho^n \, \mathbbm{1}_{\{\Theta^n < \infty \}}$, thus it suffices to show that $\Delta S_n \leq \sigma^n$.
If  stopping in the $n^{th}$ stage is triggered by the detection rule, i.e. $\Gamma_{S_n} \geq b^{n}$, then $\Delta S_n = \sigma^n$ due to \eqref{hypothetical_coincide}. Otherwise, the posterior odds of the original system do not exceed $b^n$ in the $n^{th}$ stage, and again due to \eqref{hypothetical_coincide} we have $\Delta S_n  < \sigma^n$. In any case, we have $\Delta S_n \leq \sigma^n$, and the proof is complete.
\end{proof}


Lemma \ref{lemma:orginal_hypothetical_fir} is sufficient for bounding the duration of an acceleration stage, as well as that of a detection stage when the change has already occurred at its beginning. However, if the change has not occurred at the beginning of a detection stage,  
we need a better bound on its duration. 
Thus, for  \textit{even} $n \in \bN$ we denote by 
$\{\widehat{Y}^{n}_t, t \in \bN\}$ the responses that would be  obtained after $S_{n-1}$ if block $\Xi_2$ was applied repeatedly after $S_{n-1}$ and \textit{the change never happened}, i.e., 
\begin{equation}\label{aux_Y_widehat}
\widehat{Y}^{n}_t \equiv h(\Xi_2(t), 0, V^{n}_t ), \quad  t  \in \bN,
\end{equation}
where as before $\{V^{n}_t \equiv V_{S_{n-1} + t}: t \geq 1\}$ is the ``noise'' that drives the original system \eqref{sys_equ}  after time $S_{n-1}$.     Indeed,  for $t \leq \min\{\Theta^n-1,\ S_{n} - S_{n-1}\}$,  we have
\begin{equation}
\label{aux_Y_widehat_couple}
\widehat{Y}^{n}_t =  h(\Xi_2(t), 0, V^{n}_t )
=  h(X_{S_{n-1}+t}, L_{S_{n-1}+t}, V_{S_{n-1}+t}) = Y_{S_{n-1}+t},
\end{equation}
and for  $t \leq (\Theta^n-1)$  we have
\begin{equation}
\label{aux_Y_widehat_couple_2}
\widehat{Y}^{n}_t =  h(\Xi_2(t), 0, V^{n}_t ) = Y^{n}_{t}.
\end{equation}
Further, we let  ${\tau}^n$ denote  the number of responses required by  test statistic  associated with $\{\widehat{Y}^{n}_t\}$ to exceed  threshold $\log(d)$, i.e., 
\begin{equation}\label{def:hyp_sprt}
{\tau}^n \equiv \inf\left\{t \in \bN: \sum_{j=1}^{t} \log\left(\frac{f_{\Xi_2(j)}(\widehat{Y}^{n}_j)}{g_{\Xi_2(j)}(\widehat{Y}^{n}_j)}\right)\; \geq \log(d)\right\}.
\end{equation}



\begin{lemma}\label{lemma:orginal_hypothetical_sec}
For any even  $n \in \bN$ we have
$$
\Delta S_{n} \leq \tau^{n} + \rho^{n} \, \mathbbm{1}_{\{\Theta^n \leq \tau^n < \infty\}}.
$$
\end{lemma}

\begin{proof}
For \textit{even} $n \in \bN$,  $\Delta S_n$  represents the duration of a detection stage.  We focus on the event  $\{\tau^n < \infty \}$, since otherwise the result holds trivially. By Lemma \ref{lemma:orginal_hypothetical_fir}, 
\[
\Delta S_n   \leq \Theta^n + \rho^n \mathbbm{1}_{\{\Theta^n < \infty \}},
\]
which is used when the event $\{ \Theta^n \leq \tau^n \}$ happens. 
Further, in view of \eqref{aux_Y_widehat_couple} and \eqref{def:hyp_sprt}, 
$$\Delta S_{n} \leq \tau^n \;  \text{ on } \quad  \{\tau^n < \Theta^n\}.$$
Combining these two bounds we obtain:
\begin{align*}
\Delta S_n &= (\Delta S_n) \mathbbm{1}_{\{{\tau}^n < \Theta^n\} }  + (\Delta S_n)  \, \mathbbm{1}_{\{\Theta^n \leq {\tau}^n \} } \\
&\leq \tau^n \, \mathbbm{1}_{\{{\tau}^n < \Theta^n\} } + (\Theta^n + \rho^n)  \, \mathbbm{1}_{\{\Theta^n \leq \tau^n < \infty \}} \\
&\leq \tau^n  \, \mathbbm{1}_{\{{\tau}^n < \Theta^n\} } + (\tau^n + \rho^n) \, \mathbbm{1}_{\{\Theta^n \leq \tau^n < \infty \} } 
= \tau^n + \rho^n \, \mathbbm{1}_{\{\Theta^n \leq {\tau}^n < \infty\} },
\end{align*}
which completes the proof.
\end{proof}

The next step in the proof is to  upper bound the stopping rule $\rho^n$, defined in \eqref{def:hypothetical_change_point}, associated with the hypothetical system \eqref{aux_couple_dynamic}. Recall the definition \eqref{def:hidden_filtration} of $\{\cH_t: t \geq 0\}$.

\begin{lemma} \label{lemma:sigma_n_upp2}
Suppose \eqref{assumptions:response_KL} and \eqref{assumption_stability} hold. Fix $n \in \bN$ and denote by $\ell$ the length of block $\Xi^n$. Further, fix $0 \leq j < 2\ell$ and $\epsilon > 0$.\\


\noindent (i). On the event $\{\Theta^n < \infty \}$,
\begin{align*}
  \rho^n \leq  \min\left\{\; \inf\{t \geq 0: {Z}^{n}_t  \geq \log(b^n)- \log(\Gamma^{n}_{\Theta^n+j-1} + \Pi^{n}_{\Theta^n+j})\},\;  M\left(\Xi^n, \frac{\epsilon \, \Dro(\Xi^n)}{1+\epsilon} \right) \;\right\}+ j,
\end{align*}
where $\{Z^{n}_t: t \geq 0\}$ is a process after the change-point $\Theta^{n}$:
\begin{align*}
{Z}^{n}_t &\equiv \sum_{s = \Theta^n+j}^{\Theta^n + t+j} \left[ \log \left(\frac{g_{\Xi^n(s)}(Y^{n}_s)}{f_{\Xi^n(s)}(Y^{n}_s)} \right) +d(\Xi^n)-  \frac{\epsilon }{1+ \epsilon} \, \Dro(\Xi^n) \right]\;\;\; \text{ for } \,  t \geq 0.
\end{align*}

\noindent(ii) Set $Z^{n}_{-1} = 0$. On the event  $\{\Theta^n <\infty \}$, $\{\Delta_t \equiv {Z}^{n}_t -Z^{n}_{t-1}: t \geq 0\}$ is  a sequence of independent random variables that is independent of $\cH_{S_{n-1} + \Theta_{n}+j-1}$. Further, for any  $t \geq 0$, $\Delta_t$ and $\Delta_{t+\ell}$ have the same distribution, and
$$
\frac{1}{\ell}\sum_{j=1}^{\ell}\Exp\left[ \Delta_j \right] = \frac{\Dro(\Xi^n)}{1+\epsilon}, \qquad\;\;
\frac{1}{\ell} \sum_{j=1}^{\ell}\Exp\left[  \left( \Delta_j - \Exp[\Delta_j] \right)^2 \right]  = V^{I}(\Xi^n). $$
\end{lemma}

\begin{proof} We assume $j = 0$, since the argument for general $j$ is similar.

(i) Due to assumption \eqref{assumption_stability}, for $M > M\left(\Xi^n, \epsilon \Dro(\Xi^n) / (1+\epsilon) \right)$,
$$
\left\vert \frac{1}{M} \sum_{s = \Theta^n}^{\Theta^n+M-1}\left\vert \log(1-\Pi^n_{s})\right\vert -d(\Xi^n)  \right\vert \; \leq \frac{\epsilon}{1+ \epsilon} \Dro(\Xi^n).
$$
By applying telescoping argument to the recursion \eqref{aux_couple_dynamic} of $\{\Gamma^n_t: t \geq 0\}$, 
and for $t > M\left(\Xi^n, \epsilon \Dro(\Xi^n) / (1+\epsilon) \right)$, we have
\begin{align*}
\log \Gamma^{n}_{\Theta^n + t} & \geq \sum_{s = \Theta^n}^{\Theta^n + t} \left( \log\left(\frac{g_{\Xi^n(s)}(Y^{n}_s)}{f_{\Xi^n(s)}(Y^{n}_s)}\right) + \vert \log(1-\Pi^{n}_s)\vert \right)+ \log(\Gamma^{n}_{\Theta^n-1} + \Pi^{n}_{\Theta^n})  \\
& \geq Z^n_t + \log(\Gamma^{n}_{\Theta^n-1} + \Pi^{n}_{\Theta^n}),
\end{align*}
which completes the proof.

%

\vspace{0.2cm}
\noindent (ii) In view of \eqref{aux_couple_dynamic} and the  definition of $\Theta^n$,  for $t \geq 0$ we have 
 $$
Y^{n}_{\Theta^n + t} = h(\Xi^n(\Theta^n + t), L^{n}_{\Theta^n+t}, V^{n}_{\Theta^n+t}) = 
h(\Xi^n(\Theta^n + t),1,V^{n}_{\Theta^n+t}).
$$
Due to Lemma \ref{lemma:iid_independent}, we have that 
$$\{ V^{n}_{\Theta^n+t}: t \geq 0\} \;
=\;
\{ V_{S_{n-1}+\Theta^n+t}: t \geq 0\} 
$$
are i.i.d. $\text{Unif}(0,1)$ and independent of $\cH_{S_{n-1}+\Theta^n-1}$. 
Then the proof is complete due to the periodic structure of the assignment rule.
\end{proof}

\begin{remark}
In view of (i) in the above lemma, to obtain a further upper
bound on $\rho^n$ we have to obtain a lower bound on
$\log(\Gamma^{n}_{\Theta^n+j-1} + \Pi^{n}_{\Theta^n+j})$,
which is achieved by conditioning on different events.
\end{remark}

We can now establish an upper bound on the conditional expected duration of each stage. Note that since there is typically one cycle, as implied by Lemma \ref{lemma:N_upp_bd},  it is  useful to have a stronger bound for the first acceleration stage than the remaining ones.

\begin{lemma}
\label{lemma:upper_each_stage}
Assume conditions \eqref{assumptions:response_KL} and \eqref{assumption_stability} hold. Let $\epsilon > 0$, $m \in \bN$, $b_2 \geq b_1 \geq 1$,  $d \geq 1$.

\noindent (i) If  $n \in \bN$ is odd, then $\Exp\left[\Delta S_{n} \;\vert\; \cF_{{S}_{n-1}}\right]$ is upper bounded by
\begin{align*}
\mwd{\lambda}(\Xi_1) + \frac{\log(b_1)+|\log(\zeta(\Xi_1))|}{\Dro_{1}(\Xi_1)}(1+\epsilon) + \frac{V^{I}(\Xi_1)}{\Dro^2(\Xi_1)} \, (1+\epsilon)^2 + M\left(\Xi_1, \frac{\epsilon \,  \Dro(\Xi_1)}{1+\epsilon}\right) + 3\ell_1,
\end{align*}
with  $\mwd{\lambda}(\Xi_1)$ replaced by ${\lambda}(\Xi_1)$ when $n=1$.

\noindent (ii) If $n \in \bN$ is even, 
$ \Exp[\Delta S_{n} \vert \cF_{S_{n-1}}]$ is upper bounded by 
\begin{align*}
 &\frac{\log(b_2/b_1)}{\Dro(\Xi_2)} \, (1+\epsilon) + 
 \frac{\log(d)}{1+b_1} \left( \frac{1}{J(\Xi_2)}+\frac{1}{\Dro(\Xi_2)} \right) \, (1+\epsilon) \\
&\; +\;\frac{V^{I}(\Xi_2)}{\Dro^2(\Xi_2)} \, (1+\epsilon)^2+ 
 \frac{V^{J}(\Xi_2)}{2 J^2(\Xi_2)} 
 +M\left(\Xi_2, \frac{\epsilon \, \Dro(\Xi_2)}{1+\epsilon}\right) + 2\ell_2.
\end{align*}
\end{lemma}


\begin{proof}


Fix  $b_2 \geq b_1 \geq 1$ and $d\geq 1$. 

$(i)$  Fix  $n \in \bN$ odd. We only show the first claim, since the second can be proved by the same argument  using the definition  of $\lambda(\Xi_1)$ in \eqref{block_expected_change}.
By assumption \eqref{def:zeta}  there exists some random integer $0 \leq j^* < 2\ell_1$ such that
$$ \Pi^{n}_{\Theta^n + j^*} \geq \zeta(\Xi_1)
,$$
which implies that
$$
\log \left( \Gamma^{n}_{\Theta^n-1+ j^*} + \Pi^{n}_{\Theta^n+ j^*} \right) \geq \log(\Pi^{n}_{\Theta^n+ j^*})\geq \log(\zeta(\Xi_1)).
$$
Thus, by Lemma \ref{lemma:orginal_hypothetical_fir} and Lemma \ref{lemma:sigma_n_upp2}(i) (with $j = j^*$) we have
$\Delta S_n  \leq \Theta^n + \mwd{\rho}^n$, where 
$$
\mwd{\rho}^n   \equiv \min \left\{ \inf\{t \geq 0: {Z}^{n}_t \geq \log(b_1) + |\log(\zeta(\Xi_1))|\},\;
M\left(\Xi_1, \frac{\epsilon \, \Dro(\Xi_1)}{1+\epsilon}\right)
\right\} + 2\ell_1
.
$$
By the definition \eqref{def:hypothetical_change_point} of $\Theta^n$ and \eqref{block_expected_change} of $\mwd{\lambda}(\Xi_1)$, 
\begin{align*}
\Exp[\Theta^n \, | \,  \cF_{S_{n-1}}] \leq \mwd{\lambda}(\Xi_1).
\end{align*}
Further, since $\cF_{S_{n-1}} \subset \cH_{S_{n-1} + j^*+ \Theta^n-1}$, by  Lemma \ref{lemma:sigma_n_upp2}(ii) and \ref{lemma:lorden_renewal} we obtain
\begin{align*}
\Exp[\mwd{\rho}^n \, \vert \,  \cF_{S_{n-1}}] \; \leq \; & \frac{\log(b_1) + |\log(\zeta(\Xi_1))| }{\Dro(\Xi_1)}(1+\epsilon)  +  \frac{V^{I}(\Xi_1)}{\Dro^2(\Xi_1)} \, (1+\epsilon)^2 + \ell_1 \\
&+  M\left(\Xi_1, \frac{\epsilon \, \Dro(\Xi_1)}{1+\epsilon}\right) + 2\ell_1,
\end{align*}
which completes the proof of (i). \\


%
%
%
%
$(ii).$ Fix  $n \in \bN$ \textit{even}.  On the event $\{L_{S_{n-1}} = 1\}$  we have $\Theta^n = 1$. Further, by the definition of the proposed assignment rule and
 definition \eqref{aux_couple_dynamic},  on the event $\{L_{S_{n-1}} = 1\}$ we have 
\begin{equation*}
b_1 \leq \Gamma_{S_{n-1}} = \Gamma^{n}_0 = \Gamma^{n}_{\Theta^n-1} \quad  \Rightarrow \quad 
\log(\Gamma^{n}_{\Theta^n-1} + \Pi^{n}_{\Theta^n}) \geq \log(b_1).
\end{equation*}
Thus, by Lemma \ref{lemma:orginal_hypothetical_fir} and \ref{lemma:sigma_n_upp2}(i) (with $j = 0$), on the event $\{L_{S_{n-1}} = 1\}$, 
\begin{align*}
\Delta S_n \leq 1 + \min\left\{ \inf\{t \geq 0: {Z}^{n}_t  \geq \log(b_{2})- \log(b_1) \},\; \; M\left(\Xi_2, \frac{\epsilon \, \Dro(\Xi_2)}{1+\epsilon}\right)\right\}.
\end{align*}
Then,  due to Lemma \ref{lemma:sigma_n_upp2}(ii) and Lemma \ref{lemma:lorden_renewal}, 
$$
\Exp[\Delta S_n \vert \cH_{S_{n-1}}] \leq 
1 + \frac{\log(b_2/b_1)}{\Dro(\Xi_2)} (1+\epsilon) + \frac{V^{I}(\Xi_2)}{\Dro^2(\Xi_2)}
 (1+\epsilon)^2 + \ell_2 +
M\left(\Xi_2, \frac{\epsilon \, \Dro(\Xi_2)}{1+\epsilon}\right).$$
Since $\{L_{S_{n-1}} = 1\} \in \cH_{S_{n-1}}$
and $\cF_{S_{n-1}} \subset \cH_{S_{n-1}}$, by the  law of iterated expectation we have 
\begin{align} \label{aux_change_happened_in_training}
\begin{split}
 &\Exp[\Delta S_{n}  \, \mathbbm{1}_{\{L_{S_{n-1}} = 1\}} \,\vert \, \cF_{S_{n-1}}] 
 \leq \;
 \Pro(L_{S_{n-1}} = 1 \, \vert \, \cF_{S_{n-1}}) \cdot  \\
 &\; \left(
 \frac{\log(b_2/b_1)}{\Dro(\Xi_2)} \, (1+\epsilon) + \frac{V^{I}(\Xi_2)}{\Dro^2(\Xi_2)} \, 
(1+\epsilon)^2 +  M\left(\Xi_2, \frac{\epsilon \, \Dro(\Xi_2)}{1+\epsilon}\right) + \ell_2 + 1
\right).
\end{split}
\end{align}
\vspace{0.2cm}

Now, we focus on the event $\{L_{S_{n-1}} = 0\}$  and  apply  Lemma \ref{lemma:orginal_hypothetical_sec}. 
By definition \eqref{aux_Y_widehat}, on the event $\{\Theta^n \leq \tau^n\}$
we have  
$$\prod_{j=1}^{\Theta^n-1} \frac{f_{\Xi_2(j)}(\widehat{Y}^{n}_j)}{g_{\Xi_2(j)}(\widehat{Y}^{n}_j)} < d.
$$
Then,  due to \eqref{aux_couple_dynamic} and \eqref{aux_Y_widehat_couple_2}, 
$$
\Gamma^{n}_{\Theta^n-1} 
\geq \Gamma^{n}_{0} \, \prod_{j=1}^{\Theta^n-1}\frac{g_{\Xi_2(j)}({Y}^{n}_j)}{f_{\Xi_2(j)}({Y}^{n}_j)} 
= \Gamma^{n}_{0} \, \prod_{j=1}^{\Theta^n-1}\frac{g_{\Xi_2(j)}(\widehat{Y}^{n}_j)}{f_{\Xi_2(j)}(\widehat{Y}^{n}_j)} 
\geq b_1/d, 
$$ 
which implies that on the event $\{\Theta^n \leq \tau^n < \infty\}$ we have 
$$
\log(\Gamma^{n}_{\Theta^n-1} + \Pi^{n}_{\Theta^n}) \geq \log(b_1/d).
$$
Then, due to Lemma \ref{lemma:orginal_hypothetical_sec} and \ref{lemma:sigma_n_upp2}(i) (with $j = 0$) we have
\begin{align*}
\Delta S_n &\leq \tau^n + \widehat{\rho}^n \mathbbm{1}_{ \{ \Theta^n \leq \tau^n < \infty\}},  \quad \text{where} \\
 \widehat{\rho}^n &\equiv \min\left\{ \inf\{t \geq 0: {Z}^{n}_t  \geq \log(b_{2}) - \log(b_1/d) \}, M\left(\Xi_2, \frac{\epsilon \Dro(\Xi_2)}{1+\epsilon}\right) \right\}.
\end{align*}
Due to Lemma \ref{lemma:iid_independent}, $\{\widehat{Y}^{n}_t: t \geq 1\}$ in \eqref{aux_Y_widehat} is a sequence of independent random variables, that are independent of $\cH_{S_{n-1}}$. Moreover,  for each $t \in \bN$, $\widehat{Y}^{n}_t$ has  density $f_{\Xi_2(t)}$ relative to measure $\mu$. Further, recall the discussion on $\{Z^{n}_t:t\geq 0\}$ in Lemma \ref{lemma:sigma_n_upp2}(ii).
Then, by Lemma \ref{lemma:lorden_renewal} and the law of iterated expectation we have 
\begin{align*}
&\Exp[\tau^n \mathbbm{1}_{\{L_{S_{n-1}} = 0\}} \, \vert \, \cF_{S_{n-1}}]\leq 
\Pro(L_{S_{n-1}} = 0 \, \vert \,  \cF_{S_{n-1}}) \, 
\left( \frac{\log(d)}{J(\Xi_2)} + \frac{V^{J}(\Xi_2)}{J^2(\Xi_2)} + \ell_2
\right),\\
&\Exp[\widehat{\rho}^n \mathbbm{1}_{\{\Theta^n \leq \tau^n,\; L_{S_{n-1}} = 0\}}  \, \vert \,  \cF_{S_{n-1}}] \leq \;
\Pro(L_{S_{n-1}} = 0 \, \vert \,  \cF_{S_{n-1}}) \; \times \; 
\\
&
\left(
 \frac{\log(b_2/b_1) + \log(d)}{\Dro(\Xi_2)}(1+\epsilon) + 
\frac{V^{I}(\Xi_2)}{\Dro^2(\Xi_2)} 
 (1+\epsilon)^2 +  M\left(\Xi_2, \frac{\epsilon \, \Dro(\Xi_2)}{1+\epsilon}\right) + \ell_2
\right).
\end{align*}
which together with Lemma \ref{lemma:aux_change_in_training} implies that
\begin{align*} 
&\Exp[\Delta S_{n}  \mathbbm{1}_{\{L_{S_{n-1}} = 0\}}  \, \vert  \, \cF_{S_{n-1}}] \leq
\frac{\log(d)}{1+b_1} \left( \frac{1}{J(\Xi_2)}+\frac{1}{\Dro(\Xi_2)} \right) (1+\epsilon) 
+\; \frac{V^{J}(\Xi_2)}{2 J^2(\Xi_2)} 
\\
& +
\Pro(L_{S_{n-1}} = 0 \, \vert \,  \cF_{S_{n-1}}) 
\left(
 \frac{\log(b_2/b_1)}{\Dro(\Xi_2)}(1+\epsilon) + 
\frac{V^{I}(\Xi_2)}{\Dro^2(\Xi_2)} 
 (1+\epsilon)^2+ M\left(\Xi_2, \frac{\epsilon \, \Dro(\Xi_2)}{1+\epsilon}\right) + 2 \ell_2
\right).
\end{align*}
Combining this inequality with  \eqref{aux_change_happened_in_training} completes the proof.
\end{proof}

\subsection{Proof of Theorem \ref{thm:upper_bound_T}} \label{subsec:upper_bound_T}
Recall the discussion on the general strategy in the beginning of Appendix \ref{app:proofs_regarding_upper_bound}. Write $\eta$ for $\eta(b_1,d)$ for simplicity.
\begin{proof}[Proof of Theorem \ref{thm:upper_bound_T}]
Fix $\epsilon > 0$. In view of \eqref{proposed_T_decomp} 
 and  Lemma \ref{lemma:upper_each_stage},  $\Exp[\mwd{T}]$ is upper bounded by
 \begin{align*}
& \lambda(\Xi_1) + \frac{\log(b_1)+|\log(\zeta(\Xi_1))|}{\Dro_{1}(\Xi_1)} \, (1+\epsilon) \, \Exp[N] \\
+\, & \frac{\log(b_2/b_1)}{\Dro(\Xi_2)} \, 
(1+\epsilon) \, \Exp[N] + 
 \frac{\log(d)}{1+b_1} \left( \frac{1}{J(\Xi_2)}+\frac{1}{\Dro(\Xi_2)} \right) (1+\epsilon) \, \Exp[N]  \\
+\, &\mwd{\lambda}(\Xi_1) \, (\Exp[N]-1)  + \left( \frac{V^{I}(\Xi_1)}{\Dro^2(\Xi_1)}(1+\epsilon)^2 + M \left(\Xi_1, \frac{\epsilon \Dro(\Xi_1)}{1+\epsilon}\right) + 3\ell_1 \right) \, \Exp[N]  \\
+\, &\left(\frac{V^{I}(\Xi_2)}{\Dro^2(\Xi_2)} (1+\epsilon)^2+ 
 \frac{V^{J}(\Xi_2)}{2 J^2(\Xi_2)} 
 +M\left(\Xi_2, \frac{\epsilon \Dro(\Xi_2)}{1+\epsilon}\right) + 2\ell_2 \right) \, \Exp[N] .
\end{align*}
By Lemma \ref{lemma:N_upp_bd}, $\Exp[N] \leq 1 + \eta/(1-\eta)$, which completes the proof.
\end{proof}

\subsection{Discussion of the thresholds selection}\label{discussion_b1_d_sel}
Note that $b_2 = (1-\alpha)/\alpha$ is fixed. Elementary calculus shows that for any fixed $x,y > 0$, we have
\begin{equation}
\label{aux_opt}
\arg\min_{z} \left\{ \frac{x}{z} + y \log(z) \right\} =  \frac{x}{y}.
\end{equation}
Since $\eta = 1/(1+b_1) + (b_1/(1+b_1)) (1/d)$, if $b_1$ and $d$ are large, $\eta \approx 0$. Thus, if we use the approximation that $(1-\eta) \approx 1$, then 
for given $b_1$, to minimize ${\cU}(b_1,b_2,d)$, we choose 
$$
d =
b_1 \,  \frac{\widetilde{\lambda}(\Xi_{1}) + \log(b_2)/{\Dro(\Xi_{2})}}{1/\Dro(\Xi_{2}) + 1/J(\Xi_2)}.
$$

Plugging the above choice into ${\cU}(b_1,b_2,d)$, using the approximation $(1-\eta) \approx 1$,  and keeping the dominant terms related to $b_1$, we are left with
\begin{align*}
\left(\widetilde{\lambda}(\Xi_{1}) + \frac{\log(b_2)}{\Dro(\Xi_2)}\right) \, \frac{1}{b_1 + 1} +
\log(b_1 + 1) \left( \frac{1}{\Dro(\Xi_1)} -\frac{1}{\Dro(\Xi_2)} \right).
\end{align*}
If $\Dro(\Xi_2) > \Dro(\Xi_1)$, again by \eqref{aux_opt} we select $b_1$ as in \eqref{eq:b1_d_sel}. If $\Dro(\Xi_2) = \Dro(\Xi_1)$, then we choose $b_1 = b_2$.

\section{Proofs about the design for the finite memory model}
In this Appendix, we discuss how to solve the two auxiliary, deterministic dynamic programming problems introduced in Subsection \ref{subsec:example_finite_memory}. Note that in this discussion,  the finite memory change-point model in \eqref{geometric_change_point} is assumed.

\subsection{Selection of acceleration block} \label{app:proof_acc_block_fmm}
Recall that the goal is to solve
\begin{equation}\label{geometric_accleration_opt}
V^*(\bS_0) \equiv \inf_{(x_1,x_2,\ldots) \in [K]^{\infty}} 
 \sum_{t =1}^{\infty} \prod_{s = 1}^{t} \left(1 - \Psi(x_s, \bS_{s-1})\right), 
\end{equation}
where   
$\bS_t = \Phi\left(x_t, \bS_{t-1} \right)$ for $t \in \bN$ according to   \eqref{geometric_change_point}. By definition, 
\begin{align*}
V^*(\bS_0) = & \inf_{x_1 \in [K]} (1- \Psi(x_1,\bS_0)) \inf_{(x_2,x_3,\ldots)\in [K]^{\infty}} \left(1 +\sum_{t=2}^{\infty} \prod_{s=2}^{t} \left(
1 - \Psi(x_s, \bS_{s-1})
\right)
\right) \\
=& \inf_{x_1 \in [K]} (1- \Psi(x_1,\bS_0)) 
(1 + V^*( \Phi(x_1, \bS_{0})).
\end{align*}

Denote by $\cV \equiv \{V: [K]^\kappa \to [0,\infty]\}$ the space of non-negative functions on $[K]^{\kappa}$.  Define an operator $\cT: \cV \to \cV$ as follows:   for each $V \in \cV$ and $\bS \in [K]^{\kappa}$,
\begin{equation*}
\cT(V)(\bS) \equiv \min_{x \in [K]}
\left\{
(1 - \Psi\left(x, \bS) \right)(1 +  V( \Phi(x, \bS)))
\right\}.
\end{equation*}
Then clearly $V^* \in \cV$ is a fixed-point of $\cT$, i.e., $\cT(V^*) = V^*$. Further, we show below that 
\begin{align}\label{app:aux_V_star}
  V^{*} = \lim_{t \to \infty} \cT^{\bigotimes t}(0),  
\end{align}
where $0$ is the zero function  in $\cV$. Thus, we can obtain $V^*(\cdot)$ by repeated applications of $\cT$, starting from the zero function.

Finally, as shown below, once $V^*(\cdot)$ is computed, we can obtain an optimal policy $\upsilon^*:[K]^{\kappa} \to [K]$ as follows: for each $\bS \in [K]^{\kappa}$, 
\begin{align}\label{app:aux_opt_acc_policy}
\upsilon^*(\bS) \equiv \underset{x \in [K]}{\arg\min}
\left\{
(1 - \Psi\left(x, \bS) \right)(1 +  V^*(\Phi(x, \bS)))
\right\}.
\end{align}
It is  optimal  in the sense that 
the sequence $(x^*_1, x^*_2,\ldots)$ solves \eqref{geometric_accleration_opt} where
\begin{align}
    \label{app:opt_x_star_acc}
x^*_t \equiv \upsilon^*(\bS_{t-1}) , \quad \text{and} \quad \bS_t = \Phi\left(x^*_t , \bS_{t-1} \right), \quad t \in \bN.
\end{align}

Next, we justify \eqref{app:aux_V_star} and \eqref{app:aux_opt_acc_policy} using the abstract dynamic programming theory \cite{bertsekas2022abstract}. We first formulate the problem \eqref{geometric_accleration_opt} using  the language of \cite{bertsekas2022abstract}.

Denote by $\Upsilon$ the set of all policies, i.e., $\Upsilon \equiv \{\upsilon:[K]^{\kappa} \to [K]\}$. For each policy $\upsilon \in \Upsilon$, define an associated operator $\cT_{\upsilon}: \cV \to \cV$ as follows:  for each $V \in \cV$ and $\bS \in [K]^{\kappa}$,
\begin{align*}
    \cT_{\upsilon}(V)(\bS) \equiv (1 - \Psi\left(\upsilon(\bS), \bS) \right)(1 +  V( \Phi(\upsilon(\bS), \bS))).
\end{align*}
Then, given a sequence of policies $\{\nu_t \in \Upsilon: t\geq 1\}$, we have
\begin{align*}
 \sum_{t =1}^{\infty} \prod_{s = 1}^{t} \left(1 - \Psi(\nu_{s}(\bS_{s-1}); \bS_{s-1})\right) = \lim_{N \to \infty} 
 T_{\nu_1}(T_{\nu_2}(\cdots T_{\nu_N}(0))(\bS_0),
\end{align*}
where $0$ is the zero function  in $\cV$ and $\bS_{s} = \Phi(\nu_s(\bS_{s-1}), \bS_{s-1})$ for $s \geq 1$. Since the change-point model \eqref{geometric_change_point} involves no randomness, to solve \eqref{geometric_accleration_opt}, it is equivalent to solve
\begin{align*}
  V^*(\bS_0) =  \inf_{\nu_t \in \Upsilon, t\geq 1 } \lim_{N \to \infty} T_{\nu_1}(T_{\nu_2}(\cdots T_{\nu_N}(0))(\bS_0).
\end{align*}
Specifically, if $\nu_1^*,\nu_2^*, \ldots$ is the solution to the above minimization problem, then $x_t = \nu_t^*(\bS_{t-1})$ for $t\geq 1$ is the solution to \eqref{geometric_accleration_opt}. Next, we show that the solution is achieved by a stationary policy $\nu^*$ in \eqref{app:aux_opt_acc_policy}, i.e., $\nu_t = \nu^*$ for $t \geq 1$.

Define a mapping $H: [K]^{\kappa} \times [K] \times \cV \to [0,\infty]$ as follows: for each state $\bS \in [K]^{\kappa}$, action $x \in [K]$ and cost function $V \in \cV$,
$$
H(\bS,x,V) \equiv (1 - \Psi\left(x, \bS) \right)(1 +  V( \Phi(x, \bS))).
$$
Then clearly $\cT(V)(\bS) = \min_{x \in [K]} H(\bS,x,V)$ and $\cT_\upsilon(\bS) = H(\bS, \upsilon(\bS),V)$ for each $\upsilon \in \Upsilon$.

Since $\Psi(\cdot)$ takes value in $[0,1)$, $H$ clearly satisfies the ``Monotone Increase'' structure \cite[Assumption I in Chapter 4.3]{bertsekas2022abstract} with $\bar{J}$ being the zero function in $\cV$ and $\alpha = 1$. Since the action space $[K]$ is finite, by \cite[Proposition 4.3.14]{bertsekas2022abstract}, \eqref{app:aux_V_star} holds and $\upsilon^*$ is the optimal stationary policy.

\subsection{Selection of detection block} \label{app:proof_det_block_fmm}

Recall that we try to solve
\begin{align}
\label{geometric_det_opt}
\begin{split}
V'(\bS_0) \equiv \sup_{(x_1,x_2,\ldots) \in [K]^{\infty}} 
&\liminf_{M \to \infty}
\frac{1}{M}
\sum_{t =1}^{M}\left(
I_{x_t} + \omega(\Psi(x_t, \bS_{t-1}))
\right), \\
&\text{ with } \bS_{t} = \Phi(x_t, \bS_{t-1}) \text{ for } t \geq 1.
\end{split}
\end{align} 
This is an average-cost, deterministic dynamic programming problem \cite{bertsekas1995dynamic}: the state space is $[K]^{\kappa}$, the action space is $[K]$,  the transition dynamic is given by $\bS_{t} = \Phi(x_t, \bS_{t-1})$  for $t \geq 1$, and the one-stage reward function is 
$$
r(x, \bS) \equiv I_x + \omega(\Psi(x, \bS)), \;\text{ for } x \in [K],\; \bS \in [K]^{\kappa}.
$$
Since both the state space and action space are finite, the results in \cite[Chapter 4, II]{bertsekas1995dynamic} apply. 

Without loss of generality, assume $\kappa \geq 1$. Since the transition dynamic is deterministic, for any two states $\bS, \tilde{\bS} \in [K]^{\kappa}$, we show below that there exists a stationary policy $\upsilon: [K]^{\kappa} \to [K]$ and some $1 \leq M \leq \kappa$ such that 
\begin{align}
\label{aux:path_exist}
\bS_0 = \bS, \quad \bS_{t} = \Phi(\upsilon(\bS_{t-1}), \bS_{t-1}) \text{ for }  1 \leq  t \leq M, \quad \bS_{M} = \tilde{\bS}.
\end{align}
Thus, the condition (2) in  \cite[Proposition 2.6 Chapter 4, II]{bertsekas1995dynamic} holds, which implies that
$V'(\cdot)$ is a constant function with some value $\lambda'$, and  there exists a function $h':[K]^{\kappa} \to [K]$ such that
the Bellman equation holds:
$$
\lambda' + h'(\bS) = \max_{x \in [K]} \left[
r(x, \bS) + h'( \Phi(x, \bS) )
\right] \; \text{ for each }\; \bS \in [K]^{\kappa}.
$$
Then, by  \cite[Proposition 2.1 Chapter 4, II]{bertsekas1995dynamic}, the solution to \eqref{geometric_det_opt} is given by $x'_t \equiv \upsilon'(\bS'_{t-1})$ for $t \geq 1$,   where $\bS'_t = \Phi\left(x'_t, \bS'_{t-1} \right)$, $\bS'_0 = \bS_0$, and 
$$
\upsilon'(\bS) = \max_{x \in [K]} \left[
r(x, \bS) + h'( \Phi(x, \bS) )
\right] \; \text{ for each }\; \bS \in [K]^{\kappa}.
$$
Finally,   the numerical algorithms in  \cite[Chapter 4, II]{bertsekas1995dynamic}, such as linear programming, can be used to numerically compute $\upsilon'(\cdot)$.

Now, we justify \eqref{aux:path_exist}. Denote by 
$\tilde{\bS} = (j_\kappa,\ldots,j_1) \in [K]^{\kappa}$. Define
$$
\bS_0 = \bS, \quad \bS_m = \Phi(j_m; \bS_{m-1}) \text{ for } 1 \leq m \leq \kappa.
$$
By definition, $\bS_{\kappa} = \tilde{\bS}$. If $\bS_0,\ldots, \bS_{\kappa-1}$ are distinct, then we may define
$$
\nu(\bS_{m}) = j_{m+1} \text{ for } 0 \leq m < \kappa.
$$
If $\bS_0,\ldots, \bS_{\kappa-1}$ are not distinct, we can collapse the same states and obtain a shorter path $\bS_0,\bS_1,\ldots,\bS_{M}$ with $1\leq M \leq \kappa$, $\bS_0 = \bS$, $\bS_{M}=\tilde{\bS}$, and $\bS_0,\ldots, \bS_{M-1}$ being distinct.

\section{Proofs about the asymptotic optimality}

\subsection{Proof about the asymptotic upper bound}\label{app:aymp_upper_bound}

\begin{proof}[Proof of Corollary \ref{lemma:aymp_upper_bound}]
Fix any $\epsilon > 0$. If $b_1 = b_2$, then by definition, the proposed procedure only has one cycle and  the detection stage has length zero. In this case, we can apply part (i) in Lemma \ref{lemma:upper_each_stage} with $n = 1$, and the conclusion clearly holds. Next, we assume $b_1 < b_2$ and use  the non-asymptotic upper bound in Theorem \ref{thm:upper_bound_T}. 

Due to  assumption \eqref{assumptions:response_KL} and the fact that the response densities and $K$ do not depend on $\alpha$,  we have $ 0 <  \min_{x \in [K]} I_x \leq \Dro(\Xi_i)$ for any  $\alpha \in (0,1)$, $i \in \{1,2\}$. Then, due to \eqref{assumption:asymp_cond_cp1},  
the remainder term in Theorem \ref{thm:upper_bound_T} is negligible, i.e., $\mathcal{R} =o(\lambda(\Xi_1)+|\log(\alpha)|)$ as $\alpha \to 0$. Moreover,  with  thresholds selected according to \eqref{eq:bK_sel} and \eqref{eq:b1_d_sel}, condition \eqref{assumption:asymp_cond_cp1} implies that 
$\mathcal{U}(b_1,b_2,d) \leq (\lambda(\Xi_1) + |\log(\alpha)|/\Dro(\Xi_2))(1+o(1))$.

Thus, for any fixed $\epsilon > 0$, we have
$$
\limsup_{\alpha \to 0} \frac{\Exp[\widetilde{T}]}{\lambda(\Xi_1) + |\log(\alpha)|/\Dro(\Xi_2)} \leq 1+\epsilon.
$$
Since $\epsilon > 0$ is arbitrary and the left-hand side does not depend on $\epsilon$, we complete the proof by letting $\epsilon \to 0$.
\end{proof}

\begin{proof}[Proof of Lemma \ref{lemma:upper_finite_meet_asym_cond}]
Let $\Xi \in \{\Xi_1,\Xi_2\}$ and denote by $\ell \in \{\ell_1,\ell_2\}$ the length of $\Xi$. By assumption,
$$
\kappa \leq \ell = O(1).
$$

Recall the definition of $d(\Xi)$ in \eqref{assumption_stability_pointwise}. 
By assumption \eqref{general_bounded_away_from_one}, for small enough $\alpha > 0$,  we have $\pi_t(\cdot) \leq 1-c$, which implies that $d(\Xi) \leq |\log(c)|$; this verifies the first condition in \eqref{assumption:asymp_cond_cp1}. Further, the third condition in \eqref{assumption:asymp_cond_cp1} is implied by the stronger condition, $\max\{\ell_1, \ell_2, |z_0|\}= O(1)$, which is assumed  in  Lemma \ref{lemma:upper_finite_meet_asym_cond}.

Next, we consider the speed of uniform convergence $M(\Xi,\epsilon)$ defined in \eqref{assumption_stability} for any $\epsilon > 0$. 
Recall from Subsection  \ref{subsec:example_finite_memory} that for the finite memory model,
$$
d(\Xi) = 
\frac{1}{\ell} \sum_{j=1}^{\ell} \omega\left( \Psi(\Xi(j:(j-\kappa))) \right).$$
For any positive integer $M$, define $M_{\ell} \equiv \ell \times \lfloor M/\ell \rfloor$, which is the largest multiple of $\ell$ that does not exceed $M$. By partitioning the sum from $1$ to $\ell$, from $\ell+1$ to $M_{\ell}$, and from $M_{\ell}+1$ to $M$, due to assumption \eqref{general_bounded_away_from_one}, we have that 
for any $t \geq 0, z \in [K]^{t},  j \in [\ell]$, 
\begin{align*}
    &\left\vert \frac{1}{M} \sum_{s = 1}^{M} \omega \left(
\pi_{t+s} \left(z, \Xi(j:
s+j-1) \right) \right) - d(\Xi)  \right\vert \\
\leq &\frac{\ell}{M} |\log(c)| + \left| 
\frac{(M_{\ell}/\ell)-1}{M} \ell d(\Xi) - d(\Xi)
\right| +  \frac{\ell}{M} |\log(c)| \leq \frac{4\ell}{M} |\log(c)|.
\end{align*}
Then condition \eqref{assumption_stability} holds with
  $M(\Xi,\epsilon) \equiv 4\ell |\log(c)|/\epsilon$. Since $\ell= O(1)$, we have that $M(\Xi,\epsilon) = O(1)$ for any $\epsilon>0$, which verifies the second condition in \eqref{assumption:asymp_cond_cp1}.

Finally, we verify the last condition in \eqref{assumption:asymp_cond_cp1}. Recall that $\widetilde{\lambda}(\Xi_1)$ is defined in \eqref{block_upper}, and $\lambda(z_0, \Xi_1)$  in \eqref{modified_lambda}. Due to assumption \eqref{general_bounded_away_from_one} and by definition, 
\begin{align*}
    &\widetilde{\lambda}(\Xi_1) \leq  \ell_1 + \sum_{t =  \ell_1}^{\infty} \prod_{s= \ell_1}^{t} 
    \left(
    1 - \Psi(\Xi[s: (s-\kappa)])
    \right), \\
    & \lambda(z_0, \Xi_1) \geq c^{t_0 +  \ell_1} \sum_{t = \ell_1}^{\infty} \prod_{s=\ell_1}^{t} (1 - \Psi(\Xi[s:(s-\kappa)]).
\end{align*}
Thus $\widetilde{\lambda}(\Xi_1) \leq  \ell_1 +   (1/c)^{t_0 +  \ell_1}  \lambda(z_0, \Xi_1)$. Since $\max\{\ell_1, t_0\}= O(1)$, we have 
$$
\log\left(\widetilde{\lambda}(\Xi_1)\right) = o(\lambda(z_0, \Xi_1) + |\log(\alpha)|).
$$
Further, recall the definition of $\zeta(\Xi)$ in \eqref{def:zeta}.
For the finite-memory model, since $\ell_1 \geq \kappa$, 
$$
\zeta(\Xi_1)  = \max_{1 \leq s \leq \ell_1} \,\Psi(\Xi_1[s:(s-\kappa)]).
$$
Due to the definition of $\widetilde{\lambda}(\Xi_1)$ in \eqref{block_upper}, since $\ell_1 \geq \kappa$ and due to and  assumption \eqref{general_bounded_away_from_one}, 
\begin{align*}
    \tilde{\lambda}(\Xi_1) \geq \sum_{t=\ell_1}^{\infty}  c^{\ell_1}(1-\zeta(\Xi_1) )^{t-\ell_1} = c^{\ell_1}/\zeta(\Xi_1),
\end{align*}
which, together with   the upper bound on $ \tilde{\lambda}(\Xi_1)$, 
verifies the last condition in \eqref{assumption:asymp_cond_cp1}. 
\end{proof}

\subsection{Proof of Theorem \ref{thm:asymptotic_lower_bound}}\label{app:lower_bd}
Recall the definition of $\Rro^{s}_{t}$ in \eqref{LR} and $N_{\epsilon,\alpha}$ following \eqref{detection_delay_optimal}.

\begin{proof}[Proof of Theorem \ref{thm:asymptotic_lower_bound}]
Let $\Xi$ be a block that satisfies \eqref{existence of a block} and fix  $\left(\cX, T \right) \in \cC_{\alpha}$, where $\cX \equiv \{X_t, t \in \bN\}$.  Define a new procedure with  the same stopping rule but a  modified assignment rule, $\cX'  \equiv  \{X'_t, t \in \bN \}$, where 
\begin{align*}
X'_t \equiv  \begin{cases}
\; X_t \qquad \quad\;\; t \leq T \\
\; \Xi(t-T) \quad t > T.
\end{cases}
\end{align*}
Since the new assignment rule coincides with the original one up to time  $T$, we have $\Pro\left(T < \Theta_{\cX'}\right) =
\Pro\left(T < \Theta_{\cX}\right) \leq \alpha$, and consequently   $(\cX', T) \in \cC_{\alpha}$. By this observation and the definition of $\mwd{\lambda}(\Xi)$ in \eqref{block_upper} we have
\begin{align*}
\Exp[\Theta_{\cX'} - T; \;  T < \Theta_{\cX'} ] \;\leq  \; \Pro(T < \Theta_{\cX'}) \;  \mwd{\lambda}(\Xi) \,  \leq\; \alpha \mwd{\lambda}(\Xi),
\end{align*}
and by the  decomposition \eqref{ess_decompose} for $(\cX',T)$ we obtain
$$\Exp\left[T\right] \geq
 \Exp[(T - \Theta_{\cX'})^+] \,+\, \Exp[\Theta_{\cX'}] - \alpha \mwd{\lambda}(\Xi).
 $$
Since  $(\cX, T) \in \cC_{\alpha}$ is arbitrary,  recalling the definition of $\lambda^*$ in \eqref{existence of a block}  we have
\begin{equation*}
\inf_{(\cX, T) \in \cC_{\alpha}} \Exp\left[T\right] \geq
 \inf_{(\cX,T) \in \cC_{\alpha}} \Exp\left[(T - \Theta_{\cX})^+ \right] \,+\, \lambda^* - \alpha \mwd{\lambda}(\Xi),
\end{equation*}
and, due to  \eqref{existence of a block},  it suffices  to  show  that 
as $\alpha \to 0$ \;
\begin{align} \label{conclusion} 
\inf_{(\cX, T) \in \cC_{\alpha}} \;
 \Exp \left[(T-\Theta_\cX)^+\right]\;
\geq \; \frac{|\log(\alpha)|}{\Dro^*}  \, (1+o(1)).
\end{align}
Further, it suffices to show that for any $ \epsilon, \alpha  \in (0,1)$ and any  $(\cX, T) \in \cC_\alpha$  we have  
\begin{align} \label{add_lb_show}
 \Pro (\Theta_\cX \leq T \leq \Theta_{\cX} + N_{\epsilon,\alpha}) \leq \delta_\epsilon(\alpha),
\end{align}
where $N_{\epsilon,\alpha}$ is defined after  \eqref{detection_delay_optimal}  and  $\delta_\epsilon(\alpha)$  is a term that does not depend on $(\cX, T)$ and goes to 0 as $\alpha \to 0$. Indeed, if  \eqref{add_lb_show} holds,  then   by Markov's inequality we obtain 
\begin{align*}
N^{-1}_{\epsilon,\alpha} \; \;  \Exp \left[
(T-\Theta_{\cX})^+\right] 
&\geq \Pro(T \geq  \Theta_{\cX} + N_{\epsilon,\alpha}) \\
&= \Pro(T \geq \Theta_{\cX})  -\Pro (\Theta_\cX \leq T < \Theta_{\cX} + N_{\epsilon,\alpha})  \geq 1- \alpha - \delta_\epsilon(\alpha),
\end{align*}
for any  $\epsilon, \alpha \in (0,1)$ and $(\cX, T) \in \cC_{\alpha}$. Consequently,
\begin{align*} 
 \inf_{(\cX, T) \in \cC_{\alpha}} 
\Exp \left[ (T-\Theta_{\cX})^+\right] 
&\geq {N_{\epsilon,\alpha}}{\left(1- \alpha - \delta_\epsilon(\alpha)\right)},
\end{align*}
and \eqref{conclusion}  follows if we  divide both sides  by $|\log(\alpha)|/\Dro^*$, and let first  $\alpha \to 0$  and then   $\epsilon \to 0$.\\

To prove \eqref{add_lb_show} we fix $(\cX, T) \in \cC_\alpha$ and let  $\Rro^{\Theta_{\cX}}_{T}$ denote the ``likelihood ratio'' statistic at $T$ in favour of the hypothesis that the change occurred at  $\Theta_{\cX}$ against the hypothesis that the change has not happened at time $T$ (this statistic is defined formally in  Appendix \ref{app:err_control}). 
In  Appendix \ref{app:lower_bd} we  show that for any given (small enough)  $\epsilon, \alpha \in (0,1)$,
\begin{align}
\Pro \left(\Theta_{\cX} \leq T < \Theta_{\cX} + N_{\epsilon,\alpha},\;  \Rro^{\Theta_{\cX}}_{T}  < \alpha^{-(1-\epsilon^2)} \right)  \leq \delta'_{\epsilon} (\alpha),
\label{LR_large_enough} \\
\Pro\left(\Theta_{\cX} \leq T < \Theta_{\cX} + N_{\epsilon,\alpha},\;  \Rro^{\Theta_{\cX}}_{T}  \geq \alpha^{-(1-\epsilon^2)} \right) \leq \delta''_{\epsilon} (\alpha), \label{T_large_enough}
\end{align}
where both $\delta'_{\epsilon} (\alpha)$ and $\delta''_{\epsilon} (\alpha)$ do not depend on $(\cX, T)$ and go to 0  as $\alpha \to 0$. 
We verify \eqref{LR_large_enough} in Subsection \ref{app:proof_LR_large_enough} and \eqref{T_large_enough} in Subsection \ref{app:proof_T_large_enough}. 
Since \eqref{LR_large_enough} and \eqref{T_large_enough} clearly imply \eqref{add_lb_show}, the proof is complete.
\end{proof}

\subsubsection{Proof of \eqref{LR_large_enough}} \label{app:proof_LR_large_enough}

\begin{proof}[Proof of \eqref{LR_large_enough}]
Fix  $(\cX, T) \in \cC_{\alpha}$  and write $\Theta$ instead of $\Theta_\cX$ for simplicity. By definition, $\Pro(T < \Theta) \leq \alpha$.  Observe that
\begin{align*}
\Delta & \;\equiv \;\Pro\left(\Theta \leq T < \Theta + N_{\epsilon,\alpha},\;  \Rro^{\Theta}_{T} < 
\alpha^{-(1-\epsilon^2)}
 \right)  \\
&\; = \;\sum_{s = 0}^{\infty} \sum_{t = s }^{s + N_{\epsilon,\alpha}-1}
\Pro\left(T = t,\;  \Rro^{s}_{t} < \alpha^{-(1-\epsilon^2)},\; \Theta = s \right).
\end{align*}
For any $t \geq s$,  $\{T = t\}$ and $\Rro^{s}_{t}$ are both $\cF_t$ measurable.   By Lemma \ref{lemma:change_of_distribution},
\begin{align*}
\Pro\left(T = t,\;  \Rro^{s}_{t} < \alpha^{-(1-\epsilon^2)},\; \Theta = s \right) &= \Exp \left[ \Rro^{s}_{t} \; ; \; T = t, \;  \Rro^{s}_{t} < \alpha^{-(1-\epsilon^2)},\; \Theta>t   \right]\\
&\leq    \alpha^{-(1-\epsilon^2)} \; \Pro(T = t, \Theta > t).
\end{align*}
Putting these together, we obtain
  \begin{align*}
\Delta
& \leq \; \alpha^{-(1-\epsilon^2)}  \sum_{s = 0}^{\infty} \sum_{t = s }^{s + N_{\epsilon,\alpha}-1} \Pro(T = t, \Theta > t)  \\
& \leq \;  \alpha^{-(1-\epsilon^2)} \, N_{\epsilon,\alpha} \, \sum_{t = 0}^{\infty} \Pro(T = t, \Theta > t) = \alpha^{-(1-\epsilon^2)}  \, N_{\epsilon,\alpha} \, \Pro(T < \Theta) \leq  \alpha^{\epsilon^2}  \, N_{\epsilon,\alpha}.
\end{align*}
Clearly, by definition, $\Dro^* \geq \min_{x \in [K]} I_x > 0$. Thus,
$\alpha^{\epsilon^2}  \, N_{\epsilon,\alpha} = o(1)$, which completes the proof.
\end{proof}

\subsubsection{Proof of \eqref{T_large_enough}}\label{app:proof_T_large_enough}
In the remainder of this section, we focus on the proof 
of \eqref{T_large_enough}.  We start with a few observations. 
Given an assignment rule $\cX = \{\cX_t ; t \geq 1\}$, we set 
$\widehat{\Lambda}_t \equiv \log (\Lambda_t)$, where $\Lambda_t$ is defined in Lemma \ref{lemma:recursive},   i.e., 
$$
\widehat{\Lambda}_0 =  0, \quad
\widehat{\Lambda}_t =  \log\left( \frac{g_{X_{t}}(Y_{t})}{f_{X_{t}}(Y_{t})} \right), \quad t \in \bN.
$$

Note that  the treatments and the responses start from time $1$, and $X_0$ is undefined. We further define  $X_0 \equiv 0, \; I_0 \equiv 0$.

\begin{lemma}\label{lemma:sq_int}
Assume \eqref{assumptions:response_KL} holds.
Fix any assignment rule $\cX$, and we write $\Theta$ for $\Theta_\cX$ for simplicity.
For any integer $t \geq 0$, we have
\begin{equation*}
\Exp \left[
\left( \widehat{\Lambda}_{\Theta+t} - I_{X_{\Theta + t}}\right)^2 
\right] \leq V^{*}  \equiv \max_{x \in [K]}{\{V_x^{I}\}} < \infty.
\end{equation*}
\end{lemma}
\begin{proof}
Observe that the quantity of interest is equal to the following
\begin{align*}
&\sum_{s = 0}^{\infty} \sum_{x=0}^{K}
\Exp \left[
\left( \widehat{\Lambda}_{s + t} - I_{X_{s + t}}\right)^2;
\Theta = s, X_{s + t} = x
\right] \\
\leq \;&
\sum_{s = 0}^{\infty} \sum_{x=0}^{K}
\Pro(\Theta = s, X_{s + t} = x) \;
\Exp \left[
\left(\widehat{\Lambda}_{s + t}- I_x\right)^2\vert \Theta = s, X_{s + t} = x
\right]\\
\leq\; & V^{*} \; \sum_{s = 0}^{\infty} \sum_{x=0}^{K}
\Pro(\Theta = s, X_{s + t} = x)
= V^{*},
\end{align*}
where we used the fact that $L_{s+t} = 1$ on the event $\{\Theta = s\}$.
\end{proof}


%
Recall the definition of $\{\cH_{t}: t\geq 0\}$ in \eqref{def:hidden_filtration}.
\begin{lemma}\label{lemma:M_mart}
Assume \eqref{assumptions:response_KL} holds.
Fix any assignment rule $\cX$, and we write $\Theta$ for $\Theta_\cX$ for simplicity. Then the process 
\begin{align*} 
\left\{ 
M_{\Theta+t} \equiv
\sum_{j = \Theta}^{\Theta + t}\left( \widehat{\Lambda}_j - I_{X_j}\right)\; : \; t \geq 0  \right\}
\end{align*}
is a square integrable martingale with respect to $\{\cH_{\Theta + t}: t\geq 0\}$. 
\end{lemma}
\begin{proof}
Adaptivity is obvious and square integrability is established in  Lemma \ref{lemma:sq_int}.
For any $t \geq 1$, in view of \eqref{sys_equ} and since $L_{\Theta + t} =1$, we have
$$
Y_{\Theta + t} = h(X_{\Theta + t}, 1, V_{\Theta + t}).
$$
Since $\Theta + t -1$ is an $\{\cH_t\}$-stopping time,
by Lemma \ref{lemma:iid_independent} it follows that $V_{\Theta + t}$ is independent of $\cH_{\Theta + t-1}$ and has distribution $\text{Unif}(0,1)$.
Since $X_{\Theta + t} \in \cH_{\Theta + t - 1}$, we have
\begin{align*}
\Exp\left[  \widehat{\Lambda}_{\Theta + t} - I_{X_{\Theta + t}} \vert \cH_{\Theta +t-1}
\right]= \int_{\bY} \log\left(\frac{g_{X_{\Theta + t}}}{f_{X_{\Theta + t}}}\right)
g_{X_{\Theta + t}} d\,\mu - I_{X_{\Theta + t}} = 0,
\end{align*}
which completes the proof.
\end{proof}

Next we study the behavior of above martingale. 

\begin{lemma}\label{doob_max}
Fix any assignment rule $\cX$, and we write $\Theta$ for $\Theta_\cX$ for simplicity. 
Consider the process $\{M_{\Theta + t}: t\geq 0\}$ defined in Lemma \ref{lemma:M_mart}. Then, for any  $\epsilon > 0$ we have
$$
\Pro\left(\max_{0 \leq t < m} M_{\Theta + t} \geq \epsilon \, m \right)  \leq 
\frac{V^{*}}{\epsilon^2 m},
$$
where $V^{*} < \infty$ is defined in Lemma \ref{lemma:sq_int}.
\end{lemma}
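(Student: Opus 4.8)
The plan is to apply Doob's maximal inequality to the square-integrable martingale $\{M_{\Theta+t}: t \geq 0\}$ constructed in the preceding lemma, relative to the filtration $\{\cH_{\Theta+t}: t\geq 0\}$. Since $M_{\Theta+0} = M_\Theta = \widehat{\Lambda}_\Theta - I_{X_\Theta}$ is a martingale increment-sum (with the convention that the sum starts at $j=\Theta$), the process is a genuine martingale starting from a mean-zero term, so $\Exp[M_{\Theta+t}] = 0$ for all $t \geq 0$. The event $\{\max_{0 \le t < m} M_{\Theta+t} \ge \epsilon m\}$ is a one-sided maximal event, so I would invoke the submartingale maximal inequality applied to the submartingale $\{M_{\Theta+t}\}$ itself (or, to be safe about the one-sided form, to $\{(M_{\Theta+t})^+\}$, or simply use $\{\max |M_{\Theta+t}| \geq \epsilon m\}$ as a weaker but sufficient bound): for a martingale $N_t$ with $N_0=0$,
\[
\Pro\Bigl(\max_{0\le t< m} N_t \ge a\Bigr) \;\le\; \frac{\Exp[(N_{m-1})^2]}{a^2}.
\]

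The second step is to bound $\Exp[(M_{\Theta+(m-1)})^2]$. Write $M_{\Theta+(m-1)} = \sum_{j=\Theta}^{\Theta+m-1}(\widehat{\Lambda}_j - I_{X_j})$, a sum of $m$ martingale-difference terms $\xi_j \equiv \widehat{\Lambda}_j - I_{X_j}$. By the orthogonality of martingale increments (the cross terms vanish upon conditioning, exactly as in the proof that $\{M_{\Theta+t}\}$ is a martingale), we get $\Exp[(M_{\Theta+(m-1)})^2] = \sum_{t=0}^{m-1}\Exp[\xi_{\Theta+t}^2]$. Each summand is at most $V^*$ by Lemma~\ref{lemma:sq_int}. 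Hence $\Exp[(M_{\Theta+(m-1)})^2] \le m\,V^*$, and plugging into the maximal inequality with $a = \epsilon m$ gives
\[
\Pro\Bigl(\tfrac{1}{m}\max_{0\le t< m}M_{\Theta+t} \ge \epsilon\Bigr) \;\le\; \frac{m\,V^*}{\epsilon^2 m^2} \;=\; \frac{V^*}{\epsilon^2 m},
\]
as claimed.

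The only genuinely delicate point — not so much an obstacle as something to state carefully — is that the maximal inequality is being applied over the randomly shifted time index $\Theta+t$ rather than ordinary time, so one must be sure the optional-stopping / maximal-inequality machinery is legitimately available for the shifted filtration $\{\cH_{\Theta+t}\}$; this is fine because $\Theta$ is an $\{\cH_t\}$-stopping time (noted just before the lemma) and the shifted process was already verified to be a martingale with respect to this shifted filtration, so the strong Markov structure is not needed — only the standard fact that Doob's inequality holds for any discrete-time martingale with respect to any filtration it is adapted to. A minor bookkeeping item is handling the one-sided versus two-sided form of the maximal inequality; using $(M_{\Theta+t})^+$ (a nonnegative submartingale) or bounding the one-sided max by $\max|M_{\Theta+t}|$ dispatches this cleanly, and in either case the constant $V^*/(\epsilon^2 m)$ is what survives.
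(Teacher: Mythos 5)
Your proposal is correct and follows essentially the same route as the paper: Doob's maximal inequality applied through the square of the shifted martingale $\{M_{\Theta+t}\}$ with respect to $\{\cH_{\Theta+t}\}$, followed by orthogonality of the martingale increments and the bound $\Exp[(M_{\Theta+m-1})^2]\le m V^*$ from Lemma~\ref{lemma:sq_int}. The one-sided/two-sided bookkeeping you flag is handled in the paper simply by passing to the convex image $(M_{\Theta+t})^2$, which is the same device you propose.
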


\begin{proof}
Since $\{ M_{\Theta + t}: t \geq 0\}$ is a square integrable $\{\cH_{\Theta + t}\}$-martingale, by Doob's submartingale inequality we have
\begin{align*}
\Pro\left(  \max_{0 \leq t < m} M_{\Theta + t} \geq \epsilon\, m \right) 
 \leq \frac{1}{\epsilon^2 m^2} \Exp\left[ (M_{\Theta + m-1})^2 \right].
\end{align*}
By the  orthogonality of the increments of the square-integrable martingale and  Lemma \ref{lemma:sq_int}, 
$$
\Exp\left[ (M_{\Theta + m-1})^2 \right] 
= \sum_{s = 0}^{m-1}
\Exp \left[
\left(\widehat{\Lambda}_{\Theta +s} - I_{X_{\Theta +s}}\right)^2 
\right]
\leq m\, V^{*},
$$
which completes the proof.
\end{proof}

We can finally complete the proof of \eqref{T_large_enough}.

\begin{proof}[Proof of \eqref{T_large_enough}]
Fix $(\cX, T) \in \cC_{\alpha}$ and write $\Theta$ for $\Theta_\cX$. 
Observe that 
\begin{align*}
\Pro(\Theta \leq T < \Theta + N_{\epsilon,\alpha},\;  \Rro^{\Theta}_{T} \geq \alpha^{-(1-\epsilon^2)})  
&\leq  \Pro \left(\max_{ 0 \leq t <  N_{\epsilon,\alpha}} \log\Rro^{\Theta}_{\Theta+t} \geq (1-\epsilon^2) \,  |\log (\alpha)| \right) \\
&\leq  \Pro \left(\frac{1}{N_{\epsilon,\alpha}} \max_{0 \leq  t <  N_{\epsilon,\alpha}} \log\Rro^{\Theta}_{\Theta+t} \geq (1+\epsilon) \Dro^* \right).
\end{align*}
Next, by the definition of $\log\Rro^{\Theta}_{\Theta+t}$ in \eqref{LR} it follows that 
\begin{align*}
\log \Rro^{\Theta}_{\Theta + t} &= -|\log \Pi_{{\Theta}}| +
\sum_{j = \Theta}^{\Theta + t} \left( \widehat{\Lambda}_j - I_{X_j}\right) + \sum_{j = \Theta}^{\Theta + t} \left(|\log(1-\Pi_{j})| + I_{X_j}\right) \\
& \leq  M_{\Theta+t}+ \sum_{j = \Theta}^{\Theta + t} \left(|\log(1-\Pi_{j})| + I_{X_j}\right),
\end{align*}
where recall that $\{ M_{\Theta+t}\}$ is defined in Lemma \ref{lemma:M_mart}. 
Due to \eqref{detection_delay_optimal}, for small enough $\alpha$ we have
\begin{align*}
N_{\epsilon,\alpha}^{-1} \max_{0 \leq  t <  N_{\epsilon,\alpha}} \sum_{j = \Theta}^{\Theta + t} \left(|\log(1-\Pi_{j})| + I_{X_j}\right) \leq \Dro^* (1 + 3\epsilon/4).
\end{align*}
As a result, by Lemma \ref{doob_max} there exists a constant $C$ such that  for small enough $\alpha$
\begin{align*}
&\Pro \left(N_{\epsilon,\alpha}^{-1} \max_{0 \leq t <  N_{\epsilon,\alpha}}  \log\Rro^{\Theta}_{\Theta+t} \geq (1+\epsilon) \Dro^*  \right) \\ 
&\qquad \leq\; \Pro\left(N_{\epsilon,\alpha}^{-1} \max_{0 \leq t <  N_{\epsilon,\alpha}} 
M_{\Theta + t} 
 \geq \frac{\epsilon \Dro^* }{4} \right) \leq  \frac{C}{N_{\epsilon,\alpha}},
\end{align*}
which completes the proof.
\end{proof}

\subsection{Proof of Lemma \ref{lemma:finite_lb_condition}} \label{proof:finite_lb_condition}
\begin{proof}
From the  discussion in Subsection \ref{subsec:example_finite_memory} 
    and Appendix \ref{app:proof_acc_block_fmm}, by definition,
 $\lambda^* = \lambda(z_0^*, \Xi_1^*)$   for any $\alpha > 0$.
Further, in the proof of Lemma \ref{lemma:upper_finite_meet_asym_cond}, we showed that
 $$
\widetilde{\lambda}(\Xi_1^*) \leq    \ell_1^* +   (1/c)^{t_0^*   +\ell_1^*}  \lambda(z_0^*, \Xi_1^*),
 $$
 where $t_0^*$ and $\ell_1^*$ are the lengths of $z_0^*$ and $\Xi_1^*$ respectively, and both are upper bounded by $K^{\kappa}$. Since $\kappa$ is fixed, we have that condition \eqref{existence of a block} holds with $\Xi = \Xi_1^*$ and 
$\lambda^* = \lambda(z_0^*, \Xi_1^*)$.

Now we focus on condition \eqref{detection_delay_optimal}. Fix some $\epsilon>0$. By the definition of the finite-memory model \eqref{geometric_change_point}, 
\begin{align*}
&\sup_{t \geq 0, \cX} \; \; 
\frac{1}{  N_{\epsilon,\alpha}}\sum_{s=1}^{N_{\epsilon,\alpha}}\left(
I_{X_{t+s}} + \omega(\Pi_{t+s}(\cX))
\right) \\
= &\max_{\bS_0 \in [K]^{\kappa}, \cX} \frac{1}{  N_{\epsilon,\alpha}}\sum_{s=1}^{N_{\epsilon,\alpha}}\left(
I_{X_{s}} + \omega(\Psi(X_s, X_{s-1},\ldots,X_{s-\kappa}))
\right),
\end{align*}
where $\bS_0 = (x_0,\ldots,x_{1-\kappa})$ is the initial $\kappa$ treatments.

Fix some  $\alpha >0$ and $\bS_0 \in [K]^{\kappa}$. Define the optimal sequence of treatments for the above finite-horizon, deterministic optimization problem as follows:
$$
(x_1^*,\ldots,x_{N_{\epsilon,\alpha}}^*) \equiv  \arg\max_{ \cX} \frac{1}{  N_{\epsilon,\alpha}}\sum_{s=1}^{N_{\epsilon,\alpha}}\left(
I_{x_{s}} + \omega(\Psi(x_s, x_{s-1},\ldots,x_{s-\kappa}))
\right).
$$
Further, define an infinite sequence of treatments by repeating the above finite segment, intervened by the initial $\kappa$ treatments $\bS_0$, that is, 
$$
(\tilde{x}_t: t \geq 1) \equiv (
x_1^*,\ldots,x_{N_{\epsilon,\alpha}}^*, \bS_0, x_1^*,\ldots,x_{N_{\epsilon,\alpha}}^*, \bS_0, \ldots).
$$

Recall the  discussion in Subsection \ref{subsec:example_finite_memory} 
    and Appendix \ref{app:proof_det_block_fmm}. We have that for any $\bS_0 \in [K]^{\kappa}$,
\begin{align*}
    \Dro(\Xi_2^*) =&
 \sup_{(x_1,x_2,\ldots) \in [K]^{\infty}} 
\liminf_{M \to \infty}
\frac{1}{M}
\sum_{t =1}^{M}\left(
I_{x_t} + \omega(\Psi(x_t, x_{t-1},\ldots,x_{t-\kappa}))
\right) \\ 
\geq & 
\lim_{M \to \infty}
\frac{1}{M}
\sum_{t =1}^{M}\left(
I_{\tilde{x}_t} + \omega(\Psi(\tilde{x}_t, \tilde{x}_{t-1},\ldots, \tilde{x}_{t-\kappa}) \right).
\end{align*}
Then by the definition of $(\tilde{x}_t: t \geq 1)$,
\begin{align*}
      \Dro(\Xi_2^*) \geq  \frac{1}{  N_{\epsilon,\alpha} +\kappa}\sum_{s=1}^{N_{\epsilon,\alpha}}\left(
I_{x_{s}^*} + \omega(\Psi(x_s^*, x_{s-1}^*,\ldots,x_{s-\kappa}^*))
\right),
\end{align*}
where $\bS_0 = (x_0^*,\ldots,x_{1-\kappa}^*)$. Further, by the definition of $(x_1^*,\ldots,x_{N_{\epsilon,\alpha}}^*)$,  for any $\bS_0$, 
\begin{align*}
    \Dro(\Xi_2^*) \geq \frac{N_{\epsilon,\alpha} }{  N_{\epsilon,\alpha} + \kappa} \arg\max_{ \cX}\frac{1}{  N_{\epsilon,\alpha}} \sum_{s=1}^{N_{\epsilon,\alpha}}\left(
I_{x_{s}} + \omega(\Psi(x_s, x_{s-1},\ldots,x_{s-\kappa}))
\right),
\end{align*}
which implies that
\begin{align*}
    \sup_{t \geq 0, \cX} \; \; 
\frac{1}{  N_{\epsilon,\alpha} \Dro(\Xi_2^*) }\sum_{s=1}^{N_{\epsilon,\alpha}}\left(
I_{X_{t+s}} + \omega(\Pi_{t+s}(\cX))
\right) \leq 1 + \frac{\kappa}{N_{\epsilon,\alpha}}.
\end{align*}
Since $\kappa$ is fixed and $\Dro(\Xi_2^*) = O(1)$ due to \eqref{general_bounded_away_from_one}, we have that 
condition \eqref{detection_delay_optimal}  holds with $\Dro^* = \Dro(\Xi_2^*)$.  
\end{proof}

\section{Example - exponential decay change-point model} \label{app:ex_exp}
In this section, we study the following example of  Markovian change-point models that is \textit{not} of  finite memory: 
\begin{equation}\label{exponential_decay_CP}
\Pi_t = \widetilde{\Psi}(\xi_t), \quad \xi_t = \sum_{i=1}^{\kappa} r_{i} \,  \xi_{t-i} + q_{X_t},  \quad t \in \bN,
\end{equation}
where $\widetilde{\Psi}: \bR \to [0,1)$,   $r_{i} \in [0,1]$ and $q_x \geq 0$ for $i \in [\kappa]$ and  $x \in [K]$.
This is indeed a Markovian model, in the sense of \eqref{Markovian_change_point},  with  sufficient statistic  $\bS_{t-1} = (\xi_{t-1}, \ldots, \xi_{t-\kappa})$ and the link function
$$
\Phi(X_t, \bS_{t-1}) = \sum_{i=1}^{\kappa} r_{i} \,  \xi_{t-i} + q_{X_t}.
$$

However,  unlike the  finite memory model, any assigned treatment has an impact on  future transition probabilities. When $r_i < 1$ for every $i\in [\kappa]$,  this impact decays exponentially over time, and for this reason we  refer to  $r_{1},\ldots, r_{\kappa}$ as \textit{decay} parameters, to $q_x$  as the \textit{current effect} of treatment $x \in [K]$, and to \eqref{exponential_decay_CP} as \textit{exponential decay} change-point model. We also note that when   $\kappa = 1$,    \eqref{exponential_decay_CP} reduces to 
$$
\Pi_t = \widetilde{\Psi}\left( \sum_{s=1}^{t} q_{X_s} r_1^{t-s} \right), \quad t \in \bN,$$
and setting  $r_1=1$ in the last expression  we recover   the   transition model  in  \cite[equation (7)]{wang2018tracking}. 

\subsection{Design of the proposed procedure and asymptotic optimality}
\label{subsec:exp_cp_design}
As in Subsection \ref{subsec:example_finite_memory} for  the finite memory change-point model, here for the exponential decay model in \eqref{exponential_decay_CP}, we compute
 the quantity $d(\Xi)$ defined in \eqref{assumption_stability_pointwise},    show that  condition \eqref{assumption_stability} holds, and discuss how to obtain the blocks  $\Xi_1$ and $\Xi_2$ that solve  the optimization problems in \eqref{prop_opt_blocks_sel}. We further establish the asymptotic optimality of the proposed procedure. Thus, we  set  
\begin{align*}
\bG \equiv \begin{bmatrix}
r_{1} & r_{2}  & \ldots & r_{\kappa-1} & r_{\kappa} \\ 
1 & 0 & \ldots & 0 & 0 \\ 
0 & 1 & \ldots & 0 & 0\\ 
\vdots &  & \ddots&  & \vdots \\ 
0 & 0 & \ldots & 1 & 0
\end{bmatrix}_{\kappa \times \kappa}, \quad \bQ(x) \equiv 
\begin{bmatrix}
q_{x} \\ 
0 \\ 
0\\ 
\vdots \\ 
0
\end{bmatrix}_{\kappa \times 1} \;\; \text{ for } x \in [K],
\end{align*}
and we denote    by   $\rho(\mathbb{G})$ the spectrum radius of $\mathbb{G}$, and by $\mathbb{I}$ the $\kappa \times \kappa$ identity matrix. 
Moreover, for any block $\Xi \in [K]^\ell$   we  define the following $\kappa \times 1$ vectors:
\begin{align*}
\chi_j \equiv \left(\mathbb{I} - \bG \right)^{-1} \sum_{i=j-\ell+1}^{j} 
\bG^{j - i}\bQ(\Xi[i]) , \;\;\text{ for } 1 \leq j \leq \ell. 
\end{align*}
 
 The following lemma, whose proof can be found in Appendix \ref{app:exp_stability},  verifies  condition \eqref{assumption_stability} and provides the value of 
$d(\Xi)$.

\begin{lemma}\label{lemma:exp_stability}
If the link function $\mwd{\Psi}$ is continuous and 
$\rho(\bG) < 1$,  condition  \eqref{assumption_stability} holds   for any $\Xi \in [K]^{\ell}$ with 
$d(\Xi) = \ell^{-1} \sum_{j=1}^{\ell} \omega(\mwd{\Psi}(\chi_j[1]))$.
\end{lemma}

\noindent \textit{Selection of acceleration block.}  If $\mwd{\Psi} \text{ is non-decreasing}$, then
the solution of the  first optimization problem  in \eqref{prop_opt_blocks_sel}  is 
 $\Xi_1 = (x^*)$, where $x^* \equiv \text{argmax}_{x \in [K]} q_x$. That is, during an acceleration stage we only apply a treatment  with the largest current effect parameter.\\

\noindent \textit{Selection of detection block.} 
For the second optimization problem in  \eqref{prop_opt_blocks_sel}, we provide sufficient conditions for a block of length \textit{one} to be its solution.  We start with a  general result that holds for any change-point model and whose proof can be found in Appendix \ref{app:suff_det}.
\begin{lemma}\label{lemma:suff_det}
If  there exists $x' \in [K]$ such that 
for any $t, M \in \bN$ and  $z \in [K]^t$ we have 
\begin{align*}
I_{x} +  \sum_{s = 1}^{M} 
\omega\left( \pi_{t+s}(z, x,x', \ldots,x') \right)
\leq 
I_{x'} +  \sum_{s = 1}^{M} 
\omega\left( \pi_{t+s}(z, x',x', \ldots,x') \right),
\end{align*}
then  for any $t, M \in \bN$,  $z \in [K]^t$ and $(x_{t+1},x_{t+2},\ldots x_{t+M}) \in [K]^{M}$ we have 
\begin{align*}
\sum_{s =1}^{M}\left(
I_{x_{t+s}} + \omega(\pi_{t+s}(z,x_{t+1},\ldots,x_{t+s}))
\right)\;
\leq \; 
\sum_{s =1}^{M}\left(
I_{x'} + \omega(\pi_{t+s}(z,x',\ldots,x'))
\right),
\end{align*}
 the sequence $\{x',x',\ldots \}$ solves 
for any $t \in \bN$ and $z\in [K]^{t}$ the   optimization problem 
$$
\sup_{(x_{t+1},x_{t+2},\ldots) \in [K]^{\infty}} 
\liminf_{M \to \infty}
\frac{1}{M}
\sum_{s =1}^{M}\left(
I_{x_{t+s}} + \omega(\pi_{t+s}(z, x_{t+1},\ldots,x_{t+s}))
\right),
$$ 
  and $\Xi_2 = (x')$ is the solution to the second optimization problem  in \eqref{prop_opt_blocks_sel}.
\end{lemma}


We illustrate this result for the exponential change-point model when $\kappa = 1$, $0 \leq r_1 < 1$, and $\mwd{\Psi}$ is continuously differentiable. For $t \in \bN$, $z \in [K]^t$, $x,x' \in [K]$, consider two sequences $\{\xi_t'\}$ and $\{\xi_t\}$: if $s \leq t$, $\xi'_s = \xi_s = r_1 \xi_{s-1} + q_{z(s)}$; 
\begin{align*}
&\xi_{t+1} = r_1\xi_{t} + q_{x}, \quad
\xi_{t+s} = r_1\xi_{t+s-1} + q_{x'} \;\;\text{ for } s \geq 2,\\
&\xi'_{t+s} = r_1\xi'_{t+s-1} + q_{x'} \;\;\text{ for } s \geq 1.
\end{align*}
Then, $\xi'_{t+s} - \xi_{t+s} = (q_{x'} - q_{x}) \, r_1^{s}$. Thus,
using the  Mean Value Theorem and the fact that $\omega'(x) = 1/(1-x)$, $x \in (0,1)$, we can see that  the assumption in Lemma \ref{lemma:suff_det} holds if there exists $x' \in [K]$ such that 
\begin{align}\label{exp_suf_cond}
I_{x'} - I_{x} \geq  
\frac{|q_{x'} - q_{x}|}{1-r_1} \; \sup_{\xi \in \bR} \frac{\mwd{\Psi}'(\xi)}{1 - \mwd{\Psi}(\xi)}
\end{align}
holds for every  $x \in [K]$,  where $\mwd{\Psi}'$ is the derivative of $\mwd{\Psi}$.\\

\noindent \textit{Asymptotic optimality.}  The next Corollary establishes the asymptotic optimality of the proposed procedure for the exponential change-point model in \eqref{exponential_decay_CP}  under the previous design. Recall Corollary \ref{cor:main} in the main text, and the definition of $\lambda^*$ in \eqref{existence of a block} 
 and $\Dro^*$ in \eqref{detection_delay_optimal}.

\begin{corollary}\label{cor:exp_cp_asym_opt}
Consider the exponential change-point model in \eqref{exponential_decay_CP}, and assume that the link function $\mwd{\Psi}$ is non-decreasing. If  $\Xi_1 = (x^*)$ where $x^* \equiv \max_{x \in [K]} q_x$,  then  $\lambda(\Xi_1) = \lambda^*$. Further,  if  the condition in Lemma \ref{lemma:suff_det} holds and $\Xi_2 = (x')$, then \eqref{detection_delay_optimal}  holds with $\Dro^* = \Dro(\Xi_2)$. As a result, if the response model satisfies condition 
\eqref{assumptions:response_KL},  the change-point model satisfies condition   \eqref{general_bounded_away_from_one}, and condition  \eqref{assumption:asymp_cond_cp1} holds, then the proposed procedure is asymptotically optimal, given that the thresholds are selected according to \eqref{eq:bK_sel} and \eqref{eq:b1_d_sel}.
\end{corollary}

\subsection{Simulation study for the  exponential decay model}
\label{sec:exp_sims} 

In this study, we  consider the binary response model as in Section \ref{sec:simulation} with $K =3$ treatments and 
$f_1 = 0.4, f_2 = 0.35, f_3 = 0.3$.

Further, we work under an exponential decay change-point model of the form \eqref{exponential_decay_CP} with $\kappa=1$:
\begin{align*}
\xi_t = r \xi_{t-1} + q_x,\;\; \xi_0 = 0;\quad
\Pi_t = 1 -0.15 \, \mathcal{N}(\xi_t - 2), \quad t \in \bN,
\end{align*}
 where $\mathcal{N}$ is the cumulative distribution function for the standard normal distribution. We vary the value of  $r \in [0,1]$ and for each $r$ we set $q_3 = 0$ and select $q_1$ and $q_2$ such that the expected time until the change happens if only treatment $1$ (resp. $2$) is assigned is equal to 15 (resp. 30), i.e.,   $\lambda([1]) = 15$ and $\lambda([2]) = 30$, where $[i]$ denotes the block of length $1$  that consists of treatment $i \in [3]$. The specific values for $q_1, q_2$  are listed in Appendix \ref{app:exp_values}, together with the corresponding values for  $\Dro([i])$,  $i \in [3]$. (Recall the definition of $\Dro(\Xi)$, $\lambda(\Xi)$ in  \eqref{adjust_info_num},\eqref{block_expected_change}).

The optimal procedure is implemented as described in Section \ref{sec:dp}  with state space $[0,1] \times \bR$.   Based on the previous discussion,   we select the acceleration block for the proposed procedure as  $\Xi_1 = [1]$, i.e., we assign only treatment $1$ during an acceleration stage.  Furthermore, we verify numerically that the sufficient condition \eqref{exp_suf_cond} holds with $x' = 3$ if $r \leq 0.92$. Therefore, if we assign only treatment $3$ in detection stages,  i.e if  $\Xi_2 = [3]$, Corollary \ref{cor:main} implies that the proposed procedure is asymptotically optimal at least when $r \leq 0.92$.  The results are presented in Figure \ref{fig:exp_decay}, where we observe that the performance of the proposed method is close to the optimal even when $r > 0.92$.


\begin{figure}[tbp!]
\subfloat{\label{fig:r26}
\includegraphics[width=0.4\linewidth]{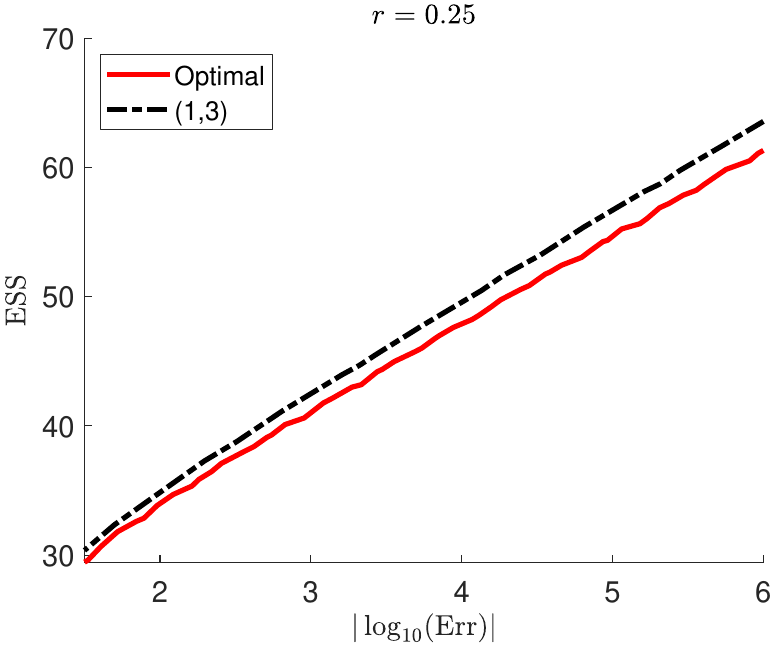}  
}
\hspace{0.1cm}
\subfloat{\label{fig:r91}
\includegraphics[width=0.4\linewidth]{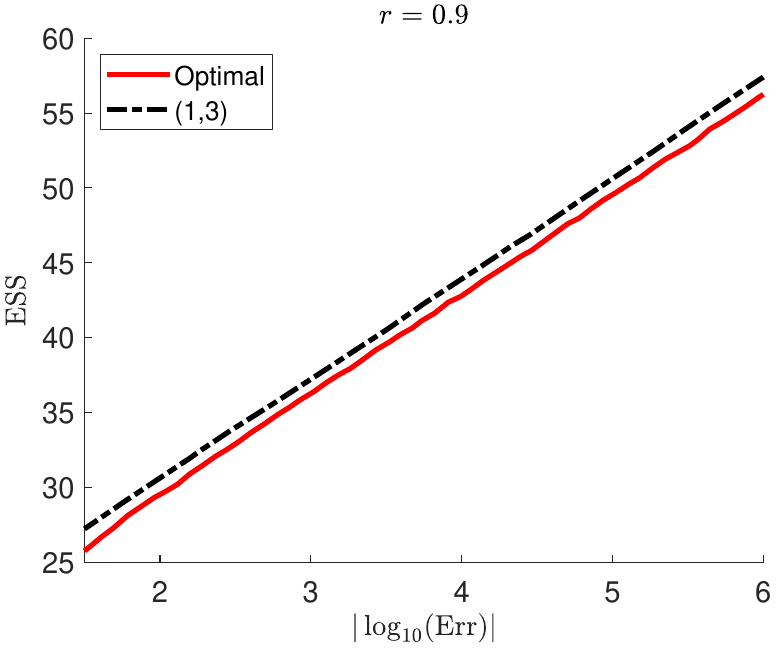}  
}\\
\subfloat{\label{fig:r96}
\includegraphics[width=0.4\linewidth]{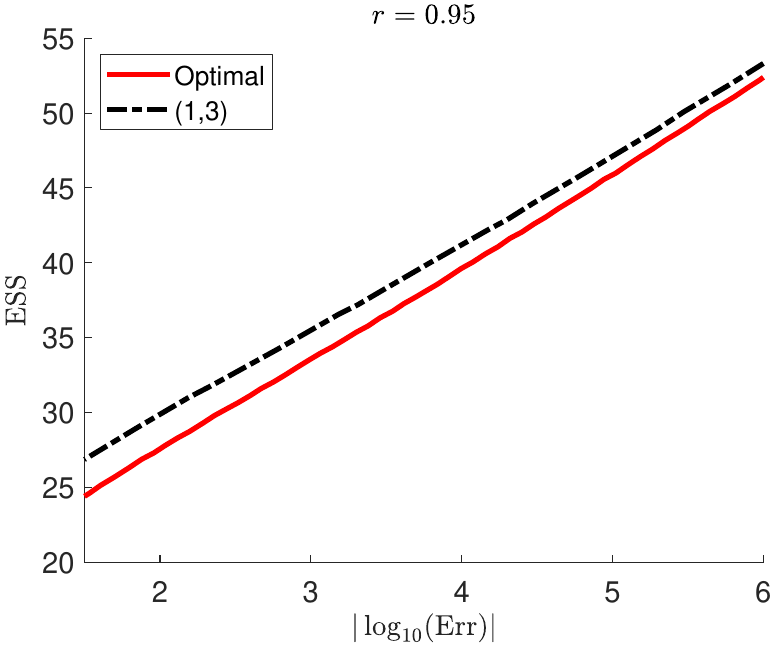}  
}
\hspace{0.1cm}
\subfloat{\label{fig:r101}
\includegraphics[width=0.4\linewidth]{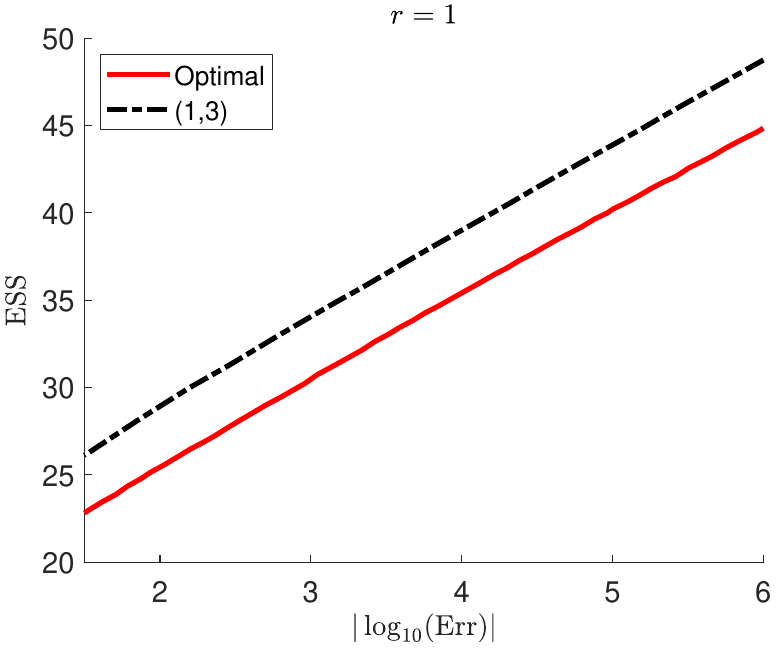}  
}

\caption{ESS versus $|\log_{10}(\text{Err})|$ for different  values of $r$.
}
\label{fig:exp_decay}
\end{figure}

%


\subsection{Proof of Lemma \ref{lemma:exp_stability}} \label{app:exp_stability}

We start with two lemmas. For $t \geq 1$, denote $\bar{{\xi}}_t = [\xi_t, \ldots, \xi_{t-\kappa+1}]^{T}$. Then by definition \eqref{exponential_decay_CP},
$$
\bar{{\xi}}_t = \bG \bar{{\xi}}_{t-1} + \bQ(X_{t}),  t \in \bN,
$$
where $\bar{{\xi}}_0 = (\xi_0, \ldots, \xi_{1-\kappa})$ is the initial state. Denote $\|\cdot\|$ the operator norm of a matrix.

\begin{lemma}\label{lemma:bound_on_xi}
Fix the initial state $\bar{{\xi}}_0$ and assume $\rho(\bG) < 1$. Then there exists a finite positive constant $B$ such that
for any sequence $\{x_t \in [K]: t \in \bN\}$ and $X_t = x_t$,   we have
$\sup_{t \geq 1} \|\bar{{\xi}}_t\| \leq B$.
\end{lemma}
\begin{proof}
Since $\rho(\bG) < 1$, we can pick $\epsilon > 0$ such that $\rho(\bG) < 1 - \epsilon$. By Gelfand's formula, there exists $M>0$ such that  $\|\bG^{t}\| \leq (\rho(\bG) + \epsilon)^t$
for every $t \geq M$. Iterating:
$$
\bar{{\xi}}_t = \sum_{s=1}^{t} \bG^{t-s}\bQ(X_{t}) + \bG^{t} \bar{{\xi}}_0.
$$
Set $B_0 \equiv \max_{x \in [K]} q_x$. Then, for $t \in \bN$,
$$
\|\bar{{\xi}}_t\| \leq B_0 \sum_{s=1}^{M} \|\bG\|^{s} + 
B_0\sum_{s=M}^{\infty} (\rho(\bG) + \epsilon)^{s}  + \max\{1, \|\bG\|^{M}\}\; 
\|\bar{{\xi}}_0\| < \infty,
$$ 
Since the right hand side does not depend on $t$, the proof is complete.
%
\end{proof}

We next establish the stability of the sequence $\{\xi_t: t \in \bN\}$ if some block $\Xi$ is always assigned from the beginning.
 
\begin{lemma}\label{lemma:stability_of_xi_expCP}
Consider the change-point model \eqref{exponential_decay_CP},
assume $\rho(\bG) < 1$ and fix some block $\Xi \in \cB_{\ell}$. Let 
$X_t = \Xi(t)$ and fix $B > 0$. For any $\epsilon > 0$, there exists $s_0$ such that 
for any $\bar{\xi}_0$ with $\|\bar{\xi}_0\| \leq B$,  we have
\begin{align*}
\left\| \bar{\xi}_{s \ell + j} -   \chi_j\right\| < \epsilon \quad   \text{ for } s \geq s_0 \quad  1 \leq j \leq \ell.
\end{align*}
\end{lemma}
\begin{proof}
By telescoping argument, we have
\begin{align*}
\bar{\xi}_{s \ell + j} = &\sum_{i = 1}^{j}   \left( \sum_{\iota = 0}^{s} \bG^{\iota\ell}\right) \bG^{j-i}\bQ(\Xi[i])
+ \sum_{i = j+1-\ell}^{0} \left( \sum_{\iota = 0}^{s-1} \bG^{\iota\ell}\right)  \bG^{j-i}\bQ(\Xi[i])  \\
+ & \bG^{s \ell + j} \bar{\xi}_0.
\end{align*}
Since $\rho(\bG) < 1$, we have
$$
\lim_{t \to \infty} \mathbb{G}^t= 0 \quad \text{ and } \quad
\lim_{t \to \infty} \sum_{\iota=0}^{t} \mathbb{G}^\iota = (\mathbb{I} - \mathbb{G})^{-1},
$$
which completes the proof.
\end{proof}

\begin{proof}[Proof of Lemma \ref{lemma:exp_stability}]
By Lemma \ref{lemma:bound_on_xi}, for fixed initial state $\bar{\xi}_0$, $\|\xi_t\| \leq B$ for any $t \geq 1$. Then the proof is complete due to Lemma \ref{lemma:stability_of_xi_expCP} and the assumption that $\widetilde{\Psi}$ is continuous.
\end{proof}

\subsection{Proof of Lemma \ref{lemma:suff_det}}\label{app:suff_det}
\begin{proof}
We proceed by induction. For $M = 1$, this is just the assumption in Lemma \ref{lemma:suff_det}. We assume the claim holds for some arbitrary $M \in \bN$, and we show that it also holds for $M+1$. For any vector $(x_{t+1},\ldots, x_{t+M+1}) \in [K]^{M+1}$ we have
\begin{align*}
&\sum_{s =1}^{M + 1}\left(
I_{x_{t+s}} + \omega(\pi_{t+s}(z, x_{t+1},\ldots,x_{t+s}))
\right) \\
= \;\;& 
I_{x_{t+1}} + \omega(\pi_{t+1}(z, x_{t+1}))+
\sum_{s =2}^{M+1 }\left(
I_{x_{t+s}} + \omega(\pi_{t+s}(z,x_{t+1},\ldots,x_{t+s}))
\right) \\
\leq &
I_{x_{t+1}} + \omega(\pi_{t+1}(z, x_{t+1}))+
\sum_{s =2}^{M+1 }\left(
I_{x'} + \omega(\pi_{t+s}(z,x_{t+1},x',\ldots,x'))
\right))\\
\leq &\sum_{s =1}^{M + 1}\left(
I_{x'} + \omega(\pi_{t+s}(x',\ldots,x'))
\right),
\end{align*}
where the first inequality is due to the induction hypothesis
 with $t = t+1$ and $z = (z,x_{t+1})$, and  the second is  due to the assumption in Lemma \ref{lemma:suff_det}. Thus, the proof is complete.
\end{proof}

\section{Example - polynomial decay change-point model} \label{app:ex_poly}
In this section, we consider
the following example of \textit{non-Markovian}  change-point models:
 \begin{equation}\label{polynomial_decay_change_point} 
\Pi_t = \widetilde{\Psi}\left(\sum_{s=1}^{t} {q_{X_s}}{(t-s+1)^{-r}}\right), \quad t \in \bN,
\end{equation}
where  $\widetilde{\Psi} : [0,\infty] \, \to\, [0,1)$,  $r > 0$, and $q_{x} \geq 0$, $x \in [K]$. We refer to  \eqref{polynomial_decay_change_point}  as \textit{polynomial decay} change-point model.  Note that this is \textit{not} Markovian in the sense of \eqref{Markovian_change_point}, since there is no sufficient statistics of a fixed dimension. 

\subsection{Design and asymptotic optimality}
For the polynomial decay model in \eqref{polynomial_decay_change_point}, 
we follow the outline as in Subsection \ref{subsec:example_finite_memory} and  Appendix \ref{subsec:exp_cp_design}: compute
 the quantity $d(\Xi)$ defined in \eqref{assumption_stability_pointwise},    verify condition \eqref{assumption_stability}, discuss how to obtain the blocks  $\Xi_1$ and $\Xi_2$ for the optimization problems in \eqref{prop_opt_blocks_sel}, and establish the asymptotic optimality.

For a block  $\Xi \in [K]^{\ell}$ we   set 
$$
\chi_j \equiv \sum_{i=j-\ell + 1}^{j} q_{\Xi(i)} \left(
\sum_{\iota = 0}^{\infty} (j + 1 - i + \iota \ell)^{-r}
\right), \quad 1 \leq j \leq \ell.
$$
 The following lemma, whose proof can be found in Appendix \ref{app:poly_stability},  verifies  condition \eqref{assumption_stability} and provides the value of 
$d(\Xi)$ for each block. 

\begin{lemma}\label{lemma:poly_stability}
If  the link function $\mwd{\Psi}$ in \eqref{polynomial_decay_change_point} is continuous and $r > 1$,  condition \eqref{assumption_stability} holds 
for any $\Xi \in \cB_{\ell}$  with
$d(\Xi) = {\ell}^{-1} \sum_{j=1}^{\ell} \omega(\mwd{\Psi}(\chi_j))$.
\end{lemma}

\noindent \textit{Selection of acceleration block.} If $\mwd{\Psi} \text{ is non-decreasing}$, then $\Xi_1 = (x^*)$ is the solution to  the first problem in \eqref{prop_opt_blocks_sel}
where
$x^* \equiv \max_{x \in [K]} q_x$.\\

\noindent \textit{Selection of detection block.}  By  Lemma \ref{lemma:suff_det} again it follows that there is some $x' \in [K]$ so that    $\Xi_2 = (x')$ is the solution to the second optimization problem  in \eqref{prop_opt_blocks_sel} if  the  condition  in Lemma \ref{lemma:suff_det}  holds, and   with a similar argument as in the case of the exponential decay model  we can show that this is indeed the case  if  
\begin{align*}
I_{x'} - I_{x} \geq  
|q_{x'} - q_{x}| \left(\sum_{i=1}^{\infty} i^{-r} \right) \sup_{\xi \in \bR} \frac{\mwd{\Psi}'(\xi)}{1 - \mwd{\Psi}(\xi)}
\end{align*}
holds for every  $x \in [K]$,  where  again $\mwd{\Psi}'$ is the derivative of $\mwd{\Psi}$.\\

\noindent \textit{Asymptotic optimality.} We note that the statements in Corollary  \ref{cor:exp_cp_asym_opt} continue to hold, if we replace the exponential decay model in \eqref{exponential_decay_CP} by the polynomial decay model in \eqref{polynomial_decay_change_point}.

\subsection{Proof of Lemma \ref{lemma:poly_stability}} \label{app:poly_stability}

\begin{proof}
Fix $\Xi \in \cB_{\ell}$. Let us define
$$
q_{*} \equiv \max_{i \in [K]} q_{i}  \quad \text{ and } \quad 
\xi_t \equiv \sum_{s = 1}^{t} q_{X_s} (t-s+1)^{-r}   \; \; \text{for} \quad t \geq 1.
$$
For any $t_0 \in \bN$ and $(x_1,\ldots, x_{t_0}) \in [K]^{t_0}$,  we set $X_i = x_i$ for $1 \leq i \leq t_0$ and $X_{t_0 +s } = \Xi(s) $ for $s \geq 1$.
Then for $s \geq 0$ and $1 \leq j \leq \ell$,  we have 
\begin{align*}
\xi_{t_0+ s \ell + j} = & \sum_{i=1}^{t_0} q_{x_i} (t_0+ s \ell + j - i + 1)^{-r}  \\
+ & 
\sum_{ i = 1}^{j}  \sum_{\iota = 0}^{s} q_{\Xi(i)} ( s \ell + j -i - \iota \ell +1 )^{-r}
+ 
\sum_{ i = j - \ell +1}^{0}  \sum_{\iota = 1}^{s} q_{\Xi(i)} ( s \ell + j -i - \iota \ell +1 )^{-r}.
\end{align*}
We denote by $\text{I, II, III}$ the three terms on the right hand side. For the first term, since $r > 1$,
\begin{align*}
\left\vert \text{I}
\right\vert & \leq q_{*} \sum_{i=1}^{\infty} (s\ell + j + i)^{-r}\; \to \; 0, \text{ as } s \to \infty.
\end{align*}
Further,
for any $\epsilon > 0$ there exists $C_{\epsilon}$, that does not depend on $t_0$ and 
$(x_1,\ldots,x_{t_0})$, such that
$$
\left\vert \text{ II} + \text{III}  - \chi_j \right\vert < \epsilon \; \; \text{ for any } \, \;
s \geq C_{\epsilon}.
$$
Then, the proof is complete, since $\omega$ and $\mwd{\Psi}$ are continuous.
\end{proof}

\section{Posterior odds process under parameter  uncertainty}\label{proof:post_odds_para}

Fix an arbitrary assignment rule $\cX$ and an integer $t \geq 0$. If the parameter $\eta$ was known,  we can apply
 Lemma \ref{lemma:recursive}. Specifically, by the definition of $\Gamma_{t,\eta}$ in Section \ref{sec:distr_uncertainty}, almost  surely, 
\begin{align*}
\Pro\left(L_t = 1 \, \vert  \, \cF_t, \eta \right)  = {\Gamma_{t,\eta}}/{(1 + \Gamma_{t,\eta})}.
\end{align*}
 Lemma \ref{app:lemma_post_eta} below establishes the $\cF_t$-conditional distribution of $\eta$, which implies that 
\begin{align*}
 \Pro\left(L_t = 1 \, \vert \, \cF_t \right)  & = \frac{\int_{\cH} \Pro\left(L_t = 1 \,  \vert  \, \cF_t, \tilde{\eta} \right) D_{t,\tilde{\eta}} \, (1+\Gamma_{t,\tilde{\eta}}) \, d \tilde{\mu}(\tilde{\eta})}{\int_{\cH}  D_{t,\tilde{\eta}} \, (1+\Gamma_{t,\tilde{\eta}}) \, d \tilde{\mu}(\tilde{\eta})} \\
 & = \frac{\int_{\cH}  D_{t,\tilde{\eta}} \, \Gamma_{t,\tilde{\eta}} \, \ d \tilde{\mu}(\tilde{\eta})}{\int_{\cH}  D_{t,\tilde{\eta}} \, (1+\Gamma_{t,\tilde{\eta}})\ d \tilde{\mu}(\tilde{\eta})}.
\end{align*}
As a result, the posterior odds of $L_t=1$ given $\cF_t$ is
\begin{align*}
\Gamma_t =  \frac{\Pro\left(L_t = 1 \, \vert \,  \cF_t \right)}{1 - \Pro\left(L_t = 1 \, \vert  \, \cF_t \right)} =
\frac{\int_{\cH}  D_{t,\tilde{\eta}} \, \Gamma_{t,\tilde{\eta}}\ d \tilde{\mu}(\tilde{\eta})}{\int_{\cH}  D_{t,\tilde{\eta}} \ d \tilde{\mu}(\tilde{\eta})},
\end{align*}
which completes the proof of Lemma \ref{distr:lemma_postodds}.

Next, we derive the $\cF_t$-conditional distribution of $\eta$.

\begin{lemma}\label{app:lemma_post_eta} 
    Fix an assignment rule $\cX$ and an integer $t \geq 0$. For any measurable subset $C \subset \cH$, almost surely, 
    \begin{align*}
        \Pro(\eta \in C \vert \cF_t) = \frac{\int  D_{t,\tilde{\eta}}(1+\Gamma_{t,\tilde{\eta}})\mathbbm{1}\{\tilde{\eta} \in C\}\ d \tilde{\mu}(\tilde{\eta})}{\int  D_{t,\tilde{\eta}}(1+\Gamma_{t,\tilde{\eta}})\ d \tilde{\mu}(\tilde{\eta})}.
    \end{align*}
\end{lemma}
\begin{proof} 
Fix any measurable subset $C \subset \cH$. As in Appendix \ref{app:poster_odds}, we set $y_{1:t} \equiv  (y_1,\ldots, y_t)$. Since $\cX$ is an assignment rule, there exists a sequence of measurable functions $\{x_j: j \geq 1\}$, such that $X_j = x_j(Y_{1:(j-1)})$. Recall the definitions of $D_{t,\eta}$ and $\Gamma_{t,\eta}$ in Section \ref{sec:distr_uncertainty}.
For each $\eta \in \cH$,
\begin{align*}
D_{t,\eta} = d_{t,\eta}(Y_{1:t}), \text{ with }\;\;
d_{t,\eta}(y_{1:t}) \equiv (1-\pi_{0,\eta}) \prod_{s=1}^{t} \left(1 - \pi_{s,\eta}(x_{1:s})\right) f_{x_s,\eta}(y_s), 
 \end{align*}
 where, for simplicity, we write $x_j$ for $x_j(y_{1:(j-1)})$, with the understanding that they are actually  functions. Similarly, for each $\eta \in \cH$,
 \begin{align*}
     \Gamma_{t,\eta} = \sum_{s=0}^{t} r_t^s(Y_{1:t}; \eta), \text{ with }\;\;
r_t^s(y_{1:t}; \eta) \equiv \pi_{s,\eta}(x_{1:s}) \prod_{j=s}^{t} \frac{g_{x_j,\eta}(y_j)/f_{x_j,\eta}(y_j) }{1 - \pi_{j,\eta}(x_{1:j})},
 \end{align*}
where we use the convention that $g_{x_0,\eta}(y_0)/f_{x_0,\eta}(y_0) = 1$.

For any non-negative, measurable function $u: \bY^{t} \to [0,\infty]$, by definition, 
\begin{align}\label{aux:eta_integral}
\begin{split}
        &\Exp\left[u(Y_{1:t});  \eta \in C\right] 
    =\Exp\left[u(Y_{1:t}); \Theta > t, \eta \in C\right] 
    + \sum_{s=0}^{t} \Exp\left[u(Y_{1:t}); \Theta = s, \eta \in C\right]
    \\
    &=  \int u(y_{1:t}) \,   d_{t,\eta}(y_{1:t}) \left(1 + \sum_{s=0}^{t}  r_t^s(y_{1:t}; \eta) \right) \mathbbm{1}\{\eta \in C\} \; d\mu^{t}(y_{1:t}) \tilde{\mu}(d\eta).
    \end{split}
\end{align}

On the other hand,
\begin{align*}
    I_C \equiv &\Exp\left[u(Y_{1:t}) \frac{\int    D_{t,\tilde{\eta}}(1+\Gamma_{t,\tilde{\eta}})\mathbbm{1}\{\tilde{\eta} \in C\}\ d \tilde{\mu}(\tilde{\eta})}{\int   D_{t,\tilde{\eta}}(1+\Gamma_{t,\tilde{\eta}})\ d \tilde{\mu}(\tilde{\eta})}\right] 
    \\
    =& \Exp\left[u(Y_{1:t}) \frac{\int    d_{t,\tilde{\eta}}(Y_{1:t}) \, (1+\sum_{s=0}^{t}  r_t^s(Y_{1:t}; \tilde{\eta}) ) \, \mathbbm{1}\{\tilde{\eta} \in C\}\ d \tilde{\mu}(\tilde{\eta})}{\int    d_{t,\tilde{\eta}}(Y_{1:t})(1+\sum_{s=0}^{t}  r_t^s(Y_{1:t}; \tilde{\eta}) ) \, d \tilde{\mu}(\tilde{\eta})}\right].
\end{align*}
The random variable inside the expectation is a non-negative function of $Y_{1:t}$. Thus, by the calculation in \eqref{aux:eta_integral}, which is for an arbitrary non-negative function of $Y_{1:t}$,  we have
\begin{align*}
    I_C =& \int \left[
    u(y_{1:t}) \frac{\int    d_{t,\tilde{\eta}}(y_{1:t}) \, (1+\sum_{s=0}^{t}  r_t^s(y_{1:t}; \tilde{\eta}) )\mathbbm{1}\{\tilde{\eta} \in C\}\ d \tilde{\mu}(\tilde{\eta})}{\int    d_{t,\tilde{\eta}}(y_{1:t})(1+\sum_{s=0}^{t}  r_t^s(y_{1:t}; \tilde{\eta}) )\ d \tilde{\mu}(\tilde{\eta})}\right]  \\
    & \qquad d_{t,\eta}(y_{1:t}) \, \left(1 + \sum_{s=0}^{t}  r_t^s(y_{1:t}; \eta) \right) \; d\mu^{t}(y_{1:t}) \; \tilde{\mu}(d\eta).
\end{align*}
Since the term inside the bracket above does not depend on $\eta$, by integrating first with respect to $\eta$ and cancelling the term in the denominator above, we have
\begin{align*}
       I_C = \int u(y_{1:t}) \,   d_{t,\tilde{\eta}}(y_{1:t}) \left(1 + \sum_{s=0}^{t}  r_t^s(y_{1:t}; \tilde{\eta}) \right) \mathbbm{1}\{\tilde{\eta} \in C\} \; d\mu^{t}(y_{1:t}) \, d \tilde{\mu}(\tilde{\eta}).
\end{align*}
Comparing it with  \eqref{aux:eta_integral}, we have
\begin{align*}
    \Exp\left[u(Y_{1:t});  \eta \in C\right]  = I_C.
\end{align*}
Since $u(\cdot)$ is arbitrary, the proof is complete.
\end{proof}

\section{Simulation parameters in the main text}
\subsection{Parameters for the finite memory model} \label{app:geo_values}
We continue the discussion in Subsection \ref{subsec:simulation_geo} and provide the values for $\{f_x\}$ and $\Psi(\cdot)$ for the two cases: (i) $(\kappa = 2, K =4)$ and (ii) $(\kappa = 3, K =6)$. We used $(0.4708, 0.4175, 0.3695, 0.3695)$ for  $f_1$-$f_4$ in (i), and
$$(0.4708, 0.4175, 0.3755, 0.3695, 0.3380, 0.3380)$$ for $f_1$-$f_6$ in (ii). Note that these numbers are randomly selected from $(0.3,0.5)$. The reason that the two cases share some numbers is that we used the same seed to generate them.
For $(i)$, the values ($64$ in total) for $\{\Psi(x,y,z): x,y,z \in [4]\}$ are listed below, where the index for $\Psi(x,y,z)$ is $16(x-1)+4(y-1)+z$:
\begin{align*}
&0.0580, 0.0336, 0.0388, 0.0010, 0.0390, 0.0753, 0.0681, 0.0584, 0.0087, 0.0715, \\&0.0686, 0.0132, 0.0506, 0.0016, 0.0093, 0.0253, 0.0126, 0.0607, 0.0655, 0.0276, \\&0.0255, 0.0089, 0.0067, 0.0570, 0.0480, 0.0045, 0.0384, 0.0321, 0.0678, 0.0574,\\& 0.0482, 0.0442, 0.0759, 0.0789, 0.0270, 0.0192, 0.0637, 0.0051, 0.0292, 0.0056,\\& 0.0255, 0.0056, 0.0232, 0.0632, 0.0724, 0.0634, 0.0449, 0.0493, 0.0289, 0.0135, \\ &0.0349, 0.0586, 0.0050, 0.0017, 0.0616, 0.0240, 0.0561, 0.0588, 0.0746, 0.0320,\\ &0.0287, 0.0645, 0.0612, 0.0522.
\end{align*}
These numbers are randomly picked from $(0,0.08)$. For $(ii)$, since it involves $6^4$ numbers, interested readers may contact the authors for the data.

\subsection{Parameters for the exponential decay model}  \label{app:exp_values} 
We continue the discussion in Subsection \ref{sec:exp_sims}. 
In Table \ref{tab:exponential_parameters}, we provide the values for $q_1, q_2$, as well as $\Dro([i])$ for $i \in [3]$ under different $r$.

\begin{table}[tbp!]
\centering
\caption{Parameter values for different $r$.}
\begin{tabular}{cccccc}
\hline
$r$      & $q_1$     & $q_2$     & $\Dro([1])$ & $\Dro([2])$ & $\Dro([3])$ \\ \hline
0.25 & 1.43 & 0.94 & 0.1530 & 0.2205 & 0.3423 \\ 
0.90 & 0.32 & 0.16 & 0.2235 & 0.2388 & 0.3423 \\ 
0.95 & 0.25 & 0.10  &  0.2434&    0.2637&    0.3423\\ 
1.00 & 0.20 & 0.06 & 0.2436 & 0.3482 & 0.3423 \\ \hline
\end{tabular}
\label{tab:exponential_parameters}
\end{table}

\section{Additional lemmas}
The following lemma is widely known and its proof can be found, e.g., in Theorem 4.1.3 of \cite{durrett2010probability}.

\begin{lemma}\label{lemma:iid_independent}
Let $\{W_{t}, t \geq 0 \}$ be a sequence of independent and identically distributed 
$d$-dimensional random vectors and denote by
$ \cG_{t} = \sigma(W_{s}: 0 \leq s \leq t), t \geq 0 $ its natural filtration.
If $S$ is a finite  $\{\cG_t\}$-stopping time,  then $\{W_{S+t}, t \geq 1\}$
is independent of $\cG_S$ and has the same distribution as $\{W_{t}, t \geq 0\}$.
\end{lemma}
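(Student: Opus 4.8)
The plan is to reduce the claim about the whole post-$S$ sequence to finite-dimensional cylinder events and then exploit the independence and stationarity already built into an i.i.d.\ sequence, decomposing over the events $\{S=n\}$. Throughout, write $\cG_t=\sigma(W_s:0\le s\le t)$ as in the statement, and recall that $A\in\cG_S$ means $A\cap\{S=n\}\in\cG_n$ for every $n\ge 0$.

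First I would fix $k\in\bN$, a Borel set $B_0\in\mathcal{B}(\bR^{dk})$, and $A\in\cG_S$, and show
\[
\Pro\bigl(A\cap\{(W_{S+1},\dots,W_{S+k})\in B_0\}\bigr)=\Pro(A)\,\Pro\bigl((W_1,\dots,W_k)\in B_0\bigr).
\]
Since $\Pro(S<\infty)=1$, the left side equals $\sum_{n\ge 0}\Pro\bigl(A\cap\{S=n\}\cap\{(W_{n+1},\dots,W_{n+k})\in B_0\}\bigr)$. For each fixed $n$, the event $A\cap\{S=n\}$ lies in $\cG_n$, while $(W_{n+1},\dots,W_{n+k})$ is a function of $W_{n+1},\dots,W_{n+k}$ alone; by independence of the $W_t$ this block is independent of $\cG_n$, so the $n$th summand factors as $\Pro(A\cap\{S=n\})\,\Pro\bigl((W_{n+1},\dots,W_{n+k})\in B_0\bigr)$. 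Because the $W_t$ are also identically distributed, $\Pro\bigl((W_{n+1},\dots,W_{n+k})\in B_0\bigr)=\Pro\bigl((W_1,\dots,W_k)\in B_0\bigr)$, which does not depend on $n$. Summing over $n$ and using $\sum_n\Pro(A\cap\{S=n\})=\Pro(A)$ yields the displayed identity.

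Taking $A=\Omega$ in that identity shows $(W_{S+1},\dots,W_{S+k})\overset{d}{=}(W_1,\dots,W_k)$ for every $k$, i.e.\ $\{W_{S+t},t\ge 1\}$ has the same law as $\{W_t,t\ge 0\}$ (the two i.i.d.\ sequences being equal in distribution). For the independence assertion, observe that the finite-dimensional cylinders $\{(w_t)_{t\ge1}:(w_1,\dots,w_k)\in B_0\}$, over all $k$ and $B_0$, form a $\pi$-system generating the product $\sigma$-algebra on $(\bR^d)^{\bN}$; for fixed $A\in\cG_S$, the class of measurable $C\subseteq(\bR^d)^{\bN}$ with $\Pro\bigl(A\cap\{(W_{S+t})_{t\ge1}\in C\}\bigr)=\Pro(A)\,\Pro\bigl((W_t)_{t\ge1}\in C\bigr)$ is a $\lambda$-system (using $\Pro(S<\infty)=1$ so the full sequence space has probability one and complements are handled correctly), and it contains the generating $\pi$-system by the previous paragraph. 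Dynkin's $\pi$--$\lambda$ theorem then gives the identity for all $C$, which is precisely the independence of $\{W_{S+t},t\ge 1\}$ from $\cG_S$.

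I do not expect a substantive obstacle: the only care needed is measure-theoretic bookkeeping, namely reading off $A\cap\{S=n\}\in\cG_n$ from the definition of the stopping-time $\sigma$-algebra, and the standard $\pi$--$\lambda$ passage from finite-dimensional cylinders to the law on $(\bR^d)^{\bN}$. Both are routine, which is exactly why the statement is quoted from \citep{durrett2010probability} rather than proved in full here.
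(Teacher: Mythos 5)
Your proof is correct, and it is exactly the standard argument the paper points to: the paper offers no proof of its own, citing Durrett's Theorem 4.1.3, whose proof is precisely your decomposition over $\{S=n\}$, using $A\cap\{S=n\}\in\cG_n$, independence of the block $(W_{n+1},\dots,W_{n+k})$ from $\cG_n$, and identical distribution to remove the dependence on $n$, followed by the routine $\pi$--$\lambda$ extension from cylinders. Nothing is missing; the parenthetical shift from $(W_1,\dots,W_k)$ to the law of $\{W_t,t\ge 0\}$ is harmless since the sequence is i.i.d.
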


The following result is non-asymptotic, and is essentially due to \cite{lorden1970}.
\begin{lemma}\label{lemma:lorden_renewal}
Fix $\ell \in \bN$  and  let $\{Z_t, t \in \bN\}$ be independent random variables such that $Z_t$ and $Z_{t+\ell}$ have the same distribution for every $t \in \bN$. For $1 \leq i \leq \ell$, assume $\Exp[Z_i^2] < \infty$ and $\Exp[Z_{i}] > 0$, and set
$$
\bar{\mu} \equiv  \ell^{-1} \sum_{i=1}^{\ell} \Exp[Z_i], \quad
\bar{V} \equiv  \ell^{-1} \sum_{i=1}^{\ell} \Exp[\left(Z_i - \Exp[Z_i]\right)^2].
$$
Let $\{S_t \equiv \sum_{s=1}^{t} Z_s, t \geq 1\}$ be the associated random walk and denote by $T(b)$ the first time that $\{S_t\}$ exceeds  some threshold $b$, i.e.
$T(b) = \inf\{t \geq 1: S_t > b\}$.
Then, for any $b > 0$ we have
$$
\Exp[T(b)] \;\leq\; {b}/{\bar{\mu}} + \ell + {\bar{V}}/{\bar{\mu}^2}.
$$ 
\end{lemma}
\begin{proof}
Let $Z'_t \equiv \sum_{s = (t-1)\ell + 1}^{t \ell} Z_s$ and 
$T'(b) \equiv \inf\{t \geq 1: \sum_{s=1}^{t} Z'_s > b\}$. Clearly, by definition, 
$$
T(b) \leq \ell \, T'(b),\quad
\Exp[Z'_1] = \ell \bar{\mu}, \quad
\Exp[\left(Z'_1 -\Exp[Z'_1] \right)^2] =
\ell \, \bar{V} .
$$
Since $\{Z'_t, t\geq 1\}$ are i.i.d., by \cite{lorden1970} we have
$$
\Exp[T(b)] \leq 
\ell \, \Exp[T'(b)]
\leq \ell \left(
\frac{b}{\ell \bar{\mu}} + 1 + \frac{\ell \bar{V}}{(\ell \bar{\mu})^2}
\right),
$$
which completes the proof.
\end{proof}

The following lemma 
follows directly from Wald's likelihood ratio identity  \citep{wald1945sequential}. 
\begin{lemma}\label{lemma:one_sided_SPRT}
Let $\{f_t: t \in \bN\}$ and $\{g_t: t \in \bN \}$ be two sequences of densities on the measurable space $(\bY,\cB(\bY))$ with respect  to some $\sigma$-finite measure $\mu$, and let
 $\{Y_t, t \in \bN\}$ be a sequence of independent, $\bY$-valued  random elements such that  $Y_t$ has a density $g_t$. Further, define for any $d > 0$,
$$
\tau(d) \equiv \inf\left\{t \in \bN: \prod_{s=1}^{t} \frac{f_s(Y_s)}{g_s(Y_s)} \geq d \right\}.
$$
Then, $\Pro(\tau(d) < \infty) \leq 1/d$.
\end{lemma}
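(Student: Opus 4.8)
The plan is to recognize the likelihood ratio process as a nonnegative supermartingale under the data-generating law and then invoke a maximal inequality. First I would set $L_0 \equiv 1$ and $L_t \equiv \prod_{s=1}^{t} f(Y_s)/g(Y_s)$ for $t \geq 1$, with $\{\cG_t \equiv \sigma(Y_1,\dots,Y_t)\}_{t\ge 0}$ the natural filtration. Since the $Y_t$ have common density $g$, we have $g(Y_s)>0$ almost surely, so each $L_t$ is well defined a.s., and
$$
\Exp\!\left[\frac{f(Y_{t+1})}{g(Y_{t+1})}\,\Big|\,\cG_t\right] = \int_{\{g>0\}} \frac{f}{g}\,g\,d\mu = \int_{\{g>0\}} f\,d\mu \le 1,
$$
because $f$ is a density. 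Hence $\Exp[L_{t+1}\mid\cG_t]\le L_t$, so $\{L_t\}$ is a nonnegative $\{\cG_t\}$-supermartingale with $\Exp[L_0]=1$. Applying Ville's maximal inequality for nonnegative supermartingales (equivalently Doob's inequality) gives, for every $d>0$,
$$
\Pro\!\left(\sup_{t\ge 0} L_t \ge d\right) \le \frac{\Exp[L_0]}{d} = \frac1d,
$$
and since $\{\tau(d)<\infty\} = \{\sup_{t\ge 1} L_t \ge d\} \subseteq \{\sup_{t\ge 0} L_t \ge d\}$ by the definition of $\tau(d)$, the claimed bound $\Pro(\tau(d)<\infty)\le 1/d$ follows immediately.

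An equivalent route, matching the paper's citation of Wald's likelihood ratio identity, is a change-of-measure argument: let $Q$ denote the probability measure under which $\{Y_t\}$ are i.i.d.\ with density $f$. Wald's identity yields $d\cdot\Pro(\tau(d)<\infty)\le \Exp[L_{\tau(d)}\ind{\{\tau(d)<\infty\}}] = Q(\tau(d)<\infty)\le 1$, where the first step uses $L_{\tau(d)}\ge d$ on $\{\tau(d)<\infty\}$ and the last uses that $Q$ is a probability measure; rearranging gives the conclusion. One can also carry this out by hand, writing $\Pro(\tau(d)<\infty)=\sum_{t\ge1}\int_{\{\tau(d)=t\}}\prod_{s=1}^{t} g(y_s)\,d\mu^t \le \frac1d\sum_{t\ge1}\int_{\{\tau(d)=t\}}\prod_{s=1}^{t} f(y_s)\,d\mu^t \le \frac1d$, the last inequality because the events $\{\tau(d)=t\}$ are disjoint cylinders of total $f$-product-measure at most $1$.

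There is essentially no substantive obstacle here; the lemma is a standard fact, and the only point needing care is measure-theoretic. Since $f$ is not assumed absolutely continuous with respect to $g$, the process $\{L_t\}$ need not be a genuine martingale and $\Exp[L_t]$ may be strictly below $1$; this is harmless, as the supermartingale maximal inequality requires only $\Exp[L_0]=1$, and every a.s.\ statement used (positivity of $g(Y_s)$, and $\prod g(y_s)\le d^{-1}\prod f(y_s)$ on $\{\tau(d)=t\}$) is taken under $\Pro$, on the full-support set. This is precisely why the statement needs no moment or divergence conditions such as~\eqref{assumptions:response_KL}, in contrast to the renewal bound of Lemma~\ref{lemma:lorden_renewal}.
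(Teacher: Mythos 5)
Your proof is correct. The paper itself offers no written-out argument for this lemma --- it simply notes that the bound ``follows directly from Wald's likelihood ratio identity'' --- and your second route (change of measure to the law $Q$ under which the $Y_t$ have density $f$, together with $L_{\tau(d)} \geq d$ on $\{\tau(d)<\infty\}$) is exactly that argument, so you have in particular reproduced the paper's approach; your hand computation over the disjoint events $\{\tau(d)=t\}$ is the same identity unpacked. Your primary route via Ville's maximal inequality for the nonnegative supermartingale $L_t$ is a genuinely different and arguably cleaner packaging: it is self-contained, and your observation that $\Exp[f(Y_{t+1})/g(Y_{t+1})\mid \cG_t] = \int_{\{g>0\}} f\,d\mu \leq 1$ handles the case $f\not\ll g$ transparently, since one only needs the supermartingale property rather than a martingale. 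The one small imprecision is in the Wald-identity route, where you write $\Exp[L_{\tau(d)}\mathbbm{1}_{\{\tau(d)<\infty\}}] = Q(\tau(d)<\infty)$: without $f \ll g$ this holds in general only with ``$\leq$'' (mass of $Q$ on paths where some $g(Y_s)=0$ is lost), but since the argument only uses the upper bound, the conclusion is unaffected --- and your concluding paragraph shows you are aware of precisely this subtlety.
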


\end{appendix}

\end{document}